\theoremstyle{plain}
\newtheorem{theor10}{Theorem}
\newenvironment{theor1}
  {\pushQED{\qed}\begin{theor10}}
  {\popQED\end{theor10}}
\newtheorem{prop10}{Proposition}
\newtheorem{cor10}{Corollary}
\newtheorem{theor0}{Theorem}[section]
\newenvironment{theor}
  {\pushQED{\qed}\begin{theor0}}
  {\popQED\end{theor0}}
\newtheorem{lem0}[theor0]{Lemma}
\newenvironment{lem}
  {\pushQED{\qed}\begin{lem0}}
  {\popQED\end{lem0}}
\newtheorem{prop0}[theor0]{Proposition}
\newenvironment{prop}
  {\pushQED{\qed}\begin{prop0}}
  {\popQED\end{prop0}}
\newtheorem{cor0}[theor0]{Corollary}
\newtheorem{defin0}[theor0]{Definition}
\newenvironment{defin}
  {\pushQED{\qed}\begin{defin0}}
  {\popQED\end{defin0}}
\theoremstyle{definition}
\newtheorem{rems0}[theor0]{Remarks}
\newenvironment{rems}
  {\pushQED{\qed}\begin{rems0}}
  {\popQED\end{rems0}}
\newtheorem{ex0}[theor0]{Example}
\newenvironment{ex}
  {\pushQED{\qed}\begin{ex0}}
  {\popQED\end{ex0}}
\newtheorem{rem0}[theor0]{Remark}
\newenvironment{rem}
  {\pushQED{\qed}\begin{rem0}}
  {\popQED\end{rem0}}
\theoremstyle{plain}
\newtheorem{as0}[theor0]{Assumption}
\newtheorem*{asn0*}{\assumptionnumber}
  \providecommand{\assumptionnumber}{}
  \newenvironment{asn0}[2]
   {\renewcommand{\assumptionnumber}{Assumption \!(#1) {\normalfont--- #2}}
    \begin{asn0*}
    \protected@edef\@currentlabel{{\normalfont(#1)}}}
   {\end{asn0*}}
\newenvironment{asn}
  {\pushQED{\qed}\begin{asn0}}
  {\popQED\end{asn0}}
\numberwithin{equation}{section}
\newcommand{\e}{\varepsilon}
\newcommand{\Pc}{\mathcal{P}}
\newcommand{\Ec}{\mathcal E}
\newcommand{\R}{\mathbb R}
\newcommand{\Z}{\mathbb Z}
\newcommand{\Ic}{\mathcal I}
\newcommand{\Md}{\mathbb M}
\newcommand{\Nc}{\mathcal N}
\newcommand{\cvf}{\rightharpoonup}
\newcommand{\w}{\omega}
\newcommand{\loc}{{\operatorname{loc}}}
\newcommand{\Id}{\operatorname{Id}}
\newcommand{\E}{\mathbb{E}}
\newcommand{\Var}{\operatorname{Var}}
\newcommand{\per}{{\operatorname{per}}}
\newcommand{\D}{\operatorname{D}}
\newcommand{\ee}{e}
\newcommand{\supess}{\operatorname{sup\,ess}}
\newcommand{\infess}{\operatorname{inf\,ess}}
\newcommand{\Aa}{\boldsymbol a}
\newcommand{\bb}{{\boldsymbol b}}
\newcommand{\Bb}{{\boldsymbol B}}
\newcommand{\Ld}{\operatorname{L}}
\newcommand{\Div}{{\operatorname{div}}}
\newcommand{\Sym}{{\operatorname{sym}}}
\newcommand{\Skew}{{\operatorname{skew}}}
\newcommand{\step}[1]{\noindent \textit{Step} #1.}
\newcommand{\substep}[1]{\noindent \textit{Substep} #1.}
\newcommand{\Pm}{\mathbb{P}}
\newcommand{\expec}[1]{\mathbb{E}\left[ #1 \right]}
\newcommand{\expecm}[1]{\mathbb{E}\big[ #1 \big]}
\newcommand{\var}[1]{\mathrm{Var}\left[#1\right]}
\newcommand{\avoir}{\langle \nabla \rangle^\frac12}
\title[Sedimentation of random suspensions]{Sedimentation of random suspensions
\\and the effect of hyperuniformity}
\author[M. Duerinckx]{Mitia Duerinckx}
\address[Mitia Duerinckx]{Universit\'e Paris-Saclay, CNRS, Laboratoire de Math\'ematiques d'Orsay, 91405~Orsay, France \& Universit\'e Libre de Bruxelles, D\'epartement de Math\'ematique, 1050~Brussels, Belgium}
\email{mduerinc@ulb.ac.be}
\author[A. Gloria]{Antoine Gloria}
\address[Antoine Gloria]{Sorbonne Universit\'e, CNRS, Universit\'e de Paris, Laboratoire Jacques-Louis Lions, 75005~Paris, France \& Universit\'e Libre de Bruxelles, D\'epartement de Math\'ematique, 1050~Brussels, Belgium}
\email{gloria@ljll.math.upmc.fr}
\begin{document}
\selectlanguage{english}

\begin{abstract}
This work is concerned with the mathematical analysis of the bulk rheology of random suspensions of rigid particles settling under gravity in viscous fluids. Each particle generates a fluid flow that in turn acts on other particles and hinders their settling. In an equilibrium perspective, for a given ensemble of particle positions, we analyze both the associated mean settling speed and the velocity fluctuations of individual particles. 
In the 1970s, Batchelor gave a proper definition of the mean settling speed, a 60-year-old open problem in physics, based on the appropriate renormalization of long-range particle contributions. In the 1980s, a celebrated formal calculation by Caflisch and Luke suggested that velocity fluctuations in dimension $d=3$ should diverge with the size of the sedimentation tank, contradicting both intuition and experimental observations. The role of long-range self-organization of suspended particles in form of hyperuniformity was later put forward to explain additional screening of this divergence in steady-state observations. In the present contribution, we develop the first rigorous theory that allows to justify all these formal calculations of the physics literature.

\bigskip\noindent
{\sc MSC-class:} 35R60; 76M50; 35Q70; 76D07; 60G55.
\end{abstract}
\maketitle
%  		35R60 PDE with randomness, 76M50 Homogenization applied to problems in fluid mechanics, 35Q35 PDEs in connection with fluid mechanics, 35Q70 PDEs in connection with mechanics of particles and systems of particles, 76D07 Stokes and related (Oseen, etc.) flows, 60G55 point processes

\setcounter{tocdepth}{1}
\tableofcontents

\section{Introduction}

\subsection{General overview}
The present article constitutes the first rigorous analysis of the bulk sedimentation of a random suspension of rigid particles in a Stokes fluid in a large tank. 
We place ourselves in a quasistatic setting where inertial forces are neglected. Given particle positions $\{x_n\}_n$ in an (experimental) tank, the corresponding instantaneous velocities $\{V_n\}_n$ are obtained by solving the steady Stokes equation with suitable conditions at the particle boundaries, cf.~Section~\ref{sec:model}. Following the physical contributions by 
Smoluchowski~\cite{Smoluchowski-12}, Burgers~\cite{Burgers-41,Burgers-42}, and Batchelor~\cite{Batchelor-72}, we place ourselves in an equilibrium perspective, and we assume that particle positions $\{x_n\}_n$ are distributed according to a known random ensemble and we analyze the corresponding random ensemble of velocities~$\{V_n\}_n$.
We specifically focus on the mean settling speed~$\expec{V_n}$  and on velocity fluctuations $\var{V_n}$.
Under suitable mixing assumptions, we recover all the predictions of the physics literature, cf.~Sections~\ref{sec:phys} and~\ref{sec:informal}, and correctly capture in particular the effect of the long-range self-organization of particles in form of hyperuniformity.
While these predictions~\cite{Smoluchowski-11,Batchelor-72,Caflisch-Luke,KS-91} are based on a simplified linear pairwise analysis, cf.~Section~\ref{sec:heur}, the key difficulty to treat the original model stems from the multibody and nonlinear nature of hydrodynamic interactions in combination with the crucial analysis of stochastic cancellations.
The mathematical merits of this contribution are mainly threefold:
\begin{enumerate}[\quad$\bullet$]
\item \emph{Sensitivity calculus:} In the spirit of Malliavin calculus, we analyze the sensitivity of the fluid velocity with respect to the random ensemble of particle positions.
More precisely, local changes in the latter are shown to generate two effects on the fluid velocity: a linear response (similar to the simplified linear pairwise analysis used in the physics literature, cf.~Section~\ref{sec:heur}) and a nonlinear response (related to hydrodynamic interactions).
Our analysis establishes a difference of locality between these two effects: the linear response is less local by one length scale, and therefore  dominates the nonlinear response.
This suggests that assumptions such as hyperuniformity that improve the scalings in the linear analysis should yield a similar improvement for the original nonlinear model.
This is quite surprising as it would not be true in the more classical setting of stochastic homogenization of linear elliptic equations, cf.~Section~\ref{sec:comp}.
\smallskip
\item \emph{Hyperuniformity:} In order to cope with the nonlinearity of hydrodynamic interactions and rigorously exploit the above sensitivity calculus, we appeal to functional inequalities on the probability space as a convenient form of nonlinear mixing condition.
While hyperuniformity expresses the suppression of large-scale density fluctuations and is usually defined in terms of the pair correlation function of the random point process, cf.~Appendix~\ref{sec:hyper}, we need to devise functional inequalities that are compatible with hyperuniformity.
For that purpose, in line with the multiscale variance inequalities that we introduced in~\cite{DG1,DG2} for point processes, we introduce here a new family of \emph{hyperuniform variance inequalities}, cf.~assumption~\ref{Hyp+} below. We believe that these functional inequalities, which  provide a versatile tool for the rigorous analysis of nonlinear hyperuniform systems, are of independent interest.
\smallskip
\item \emph{Annealed regularity:}
Due to the nonlinearity of hydrodynamic interactions,
the analysis critically requires
fine regularity results on the steady Stokes equation with a random suspension.
Although deterministic regularity results could be used, the latter always require a large enough minimal interparticle distance --- which is physically an unsatisfying restriction ---, whether they are based on the reflection method~\cite{Jabin-Otto-04,Hofer-18,Mecherbet-19,Hofer-19} or on other perturbative ideas~\cite{Gloria-19},  cf.~Section~\ref{sec:previous}.
In contrast, in this work, we take advantage of randomness and rely on a new family of non-perturbative {\it annealed} regularity results, cf.~Section~\ref{sec:reg0}, to which the companion article \cite{DG-20+} is dedicated.
Using such refined analytical tools inspired by the quantitative theory of stochastic homogenization of divergence-form linear elliptic equations,
we fully take into account hydrodynamic interactions for the first time in a non-dilute regime.
\end{enumerate}
We start this introduction by reviewing the physics literature. Next, we introduce the precise Stokes system that we study in the present work, we give an informal statement of our main results, and briefly discuss the relation to previous works in the mathematics literature. 
Precise assumptions and rigorous statements are postponed to Section~\ref{sec:main-res}.

\subsection{Review of the physics literature}\label{sec:phys}

A linear analysis of the Stokes equation shows that each particle in a Stokes fluid generates a long-range flow disturbance, which only decays as $O(r^{2-d})$ with the distance $r$.
Any naïve summation of particle contributions would therefore diverge as $O(L^2)$ in a tank of size $L$, in particular leading to the erroneous conclusion that the mean settling speed would not be a well-defined bulk quantity and would depend on the size and shape of the tank.
This paradox remained an open problem for 60 years since the work of Smoluchowski~\cite{Smoluchowski-12} in 1912, despite several notable attempts by Burgers~\cite{Burgers-41,Burgers-42}. The correct screening of hydrodynamic interactions was first unravelled by Batchelor~\cite{Batchelor-72} in the 1970s (see also the revisited approaches by Hinch~\cite{Hinch-77} and Feuillebois~\cite{Feuillebois-84}), which formally shows that the mean settling speed is well-defined in dimension~$d>2$, and further establishes a dilute expansion at small volume fraction.
The divergence is screened by a macroscopic backflow that appears as a multiparticle effect.

\medskip\noindent
The next step towards the rheology of sedimenting suspensions is the analysis of velocity fluctuations of individual particles, which is viewed as an intermediate step towards the understanding of hydrodynamic diffusion, e.g.~\cite{NHHOG-95}.
In the 1980s, Caflisch and Luke~\cite{Caflisch-Luke} argued that for a random suspension of particles, despite Batchelor's renormalization,
velocity fluctuations should diverge linearly in the size of the tank in dimension $d=3$, which would again
contradict both intuition and steady-state experimental observations.
As pointed out by Caflisch and Luke~\cite{Caflisch-Luke}, this divergence is strongly related to the standard assumption that particle positions are maximally disordered (that is, uniformly distributed in the tank and independent up to volume exclusion), suggesting that long-range order might drastically change the conclusion.

\medskip\noindent
This was made precise by Hinch~\cite{Hinch-88} in form of the scaling analysis of a ``blob model'', where particles are assumed to be organized in large correlated regions, constituting ``blobs'' at some characteristic correlation scale: particles are maximally disordered at smaller scales,
while density fluctuations are reduced on larger scales.
For such a model, the Caflisch-Luke prediction is expected to hold only up to the correlation scale, while long-range hydrodynamic interactions would be screened on larger scales. In other words, density fluctuations are expected to drive velocity fluctuations, and the spontaneous self-organization of particles would be the key mechanism that prevents the divergence.

\medskip\noindent
This scaling analysis was later refined by Koch and Shaqfeh~\cite{KS-91}: a simple condition on the pair correlation function of the ensemble of particle positions was put forward and formally shown to ensure the boundedness of velocity fluctuations.
The condition coincides with what was later coined ``hyperuniformity'' by Torquato and Stillinger~\cite{Torquato-Stillinger-03} (see also~\cite{Torquato-16,Ghosh-Lebowitz-17}) and characterizes the suppression of density fluctuations on large scales, or equivalently the vanishing of the structure factor at vanishing wavenumber, cf.~Appendix~\ref{sec:hyper}.

\medskip\noindent
From the experimental viewpoint, it took some years to devise experiments that give 
a clear picture on velocity fluctuations. 
Observations are as follows:
when starting from a well-mixed suspension (which can be reasonably modeled as maximally disordered), initial fluctuations are diverging in agreement with the Caflisch-Luke prediction, while, after some time evolution, particles reorganize and velocity fluctuations are dramatically reduced, see e.g.~\cite{NG-95,Ladd-96,Ladd-97,SHC-97,LAT-01}.
Whether this is indeed due to long-range order is however not clear and does not seem supported by recent experiments~\cite{Guazzelli,Guazzelli2}. Other possible screening mechanisms have been proposed in the physics literature based on inertial effects, on stratification, on wall effects, or on the absorbing role of the bottom of the container, see e.g.~\cite{Brenner-99}, but still no consensus has emerged~\cite{GH-rev-10,Segre-rev-16}. Such additional effects are not taken into account in the present contribution.

\medskip\noindent
To conclude this discussion, let us emphasize that all above-mentioned predictions are based on calculations for a linearized version of the problem, which amounts to replacing multibody hydrodynamic interactions by two-body (Coulomb-type) interactions as a first-order dilute approximation, cf.~Section~\ref{sec:heur}.
The merit of the present work is to extend these results to the full model, and give definite conclusions. In particular, this contribution shows that either there is indeed long-range order in experiments,
or additional effects must be included in the sedimentation model.

\subsection{Discussion of the model}\label{sec:model}
We now describe the sedimentation model under study, which allows to compute instantaneous particle velocities from their positions in a quasistatic perspective.
We consider a tank of size $L\ge1$, which for simplicity we choose as the cube $Q_L:=(-\frac L2,\frac L2]^d$ of side length $L$ with periodic boundary conditions.
The tank is filled with a (steady) Stokes fluid, together with a monodisperse collection of disjoint spherical suspended particles,
\[\Ic_L:=\bigcup_nI_{n,L},\]
where the particle $I_{n,L}:=B(x_{n,L})$ is the unit ball centered at $x_{n,L}$ and where $\Pc_L:=\{x_{n,L}\}_n$ is a collection of positions in the tank $Q_L$. The total volume fraction is denoted by
\begin{equation}\label{eq:def-lamL}
\lambda_L:=L^{-d}\,|\Ic_L|.
\end{equation}
The fluid flow satisfies the following steady Stokes equation outside the suspended particles, with periodic boundary conditions,
\begin{equation}\label{eq:StokesL}
-\triangle \phi_L+\nabla \Pi_L = -\alpha_Le,\qquad\Div\phi_L=0,\qquad\text{in $Q_L\setminus\Ic_L$,}
\end{equation}
where the constant right-hand side $-\alpha_L e$ accounts for the multiparticle backflow in the fluid in the opposite direction to gravity $e\in\R^d$, and where we have set for abbreviation the relevant factor
\[\alpha_L:=\frac{\lambda_L}{1-\lambda_L}.\]
In the present periodic setting, this backflow is imposed by the solvability condition for the Stokes equation~\eqref{eq:StokesL} together with the boundary conditions below.
As described in Theorem~\ref{th:hom}, in sedimentation experiments, since no additional force is applied, this backflow term does not appear and is in fact compensated by the pressure.

\medskip\noindent
No-slip boundary conditions are imposed at particle boundaries. As particles are constrained to have rigid motions, this amounts to letting the velocity field $\phi_L$ be extended inside particles, with the rigidity constraint
\begin{equation}\label{eq:StokesL-BC1}
\D(\phi_L)=0,\qquad\text{in $\Ic_L$},
\end{equation}
where $\D(\phi_L)$ denotes the symmetrized gradient of $\phi_L$. In other words, this condition means that $\phi_L$ coincides with a rigid motion $V_{n,L}+\Theta_{n,L}(x-x_{n,L})$ inside each particle $I_{n,L}$, for some $V_{n,L}\in\R^d$ and skew-symmetric matrix $\Theta_{n,L}$ (cf.~$\Div \phi_L=0$).
Next, gravity $e\in\R^d$ appears through the force and torque balances on each particle,
\begin{gather}
e|I_{n,L}|+\int_{\partial I_{n,L}}\sigma(\phi_L,\Pi_L)\nu=0,\label{eq:StokesL-BC2}\\
\int_{\partial I_{n,L}}\Theta\nu\cdot\sigma(\phi_L,\Pi_L)\nu=0,\qquad\text{for all skew-symmetric matrices $\Theta$},\label{eq:StokesL-BC3}
\end{gather}
where $\sigma(\phi_L,\Pi_L)$ stands for the usual Cauchy stress tensor,
\[\sigma(\phi_L,\Pi_L)=2\D(\phi_L)-\Pi_L\Id,\]
and
where $\nu$ stands for the outward unit normal vector at particle boundaries.
As $\phi_L$ and~$\Pi_L$ are only defined up to a constant, we fix them by imposing the vanishing average conditions,
\[\int_{Q_L}\phi_{L}=0,\qquad\int_{Q_L\setminus\Ic_L}\Pi_L=0.\]
Well-posedness for~\eqref{eq:StokesL}--\eqref{eq:StokesL-BC3} with $\phi_L\in H^1_\per(Q_L)^d$ and $\Pi_L\in\Ld^2_\per(Q_L\setminus\Ic_L)$ follows from the standard theory for the steady Stokes equation, e.g.~\cite[Section~IV]{Galdi}.
In addition, regularity theory ensures that~$(\phi_L,\Pi_L)$ is smooth on $Q_L\setminus\Ic_L$, is a classical solution of~\eqref{eq:StokesL}, and that boundary conditions are satisfied in a pointwise sense.

\medskip\noindent
Given the positions~$\{x_{n,L}\}_n$ of the suspended particles $\{I_{n,L}\}_n$, the above model allows to compute their instantaneous velocities~$\{V_{n,L}\}_n$, which are given by the averaged boundary values
\begin{equation}\label{eq:def-V-phi}
V_{n,L}:=\fint_{I_{n,L}}\phi_L.
\end{equation}
(Note that the rotational or skew-symmetric part $\Theta_{n,L}$ does not contribute to the average.)

\begin{rems}\label{rem:proj}$ $
\begin{enumerate}[(a)]
\item \emph{Reformulation by projection:}\\
As checked e.g.~in~\cite{Niethammer-Schubert-19,Hofer-19}, the weak solution $\phi_L$ of the above equations~\eqref{eq:StokesL}--\eqref{eq:StokesL-BC3} can equivalently be written as~$\phi_L=\frac1{1-\lambda_L}\pi_L\phi_L^\circ$, where $\phi_L^\circ$ denotes the solution of the following ``linear'' approximation, where particle interactions are neglected,
\begin{equation}\label{eq:reform}
\quad-\triangle\phi_L^\circ+\nabla\Pi_L^\circ=\big(\mathds1_{\Ic_L}-\lambda_L\big)e,\qquad\Div\phi_L^\circ=0,\qquad\text{in $Q_L$},
\end{equation}
and where $\pi_L$ is the orthogonal projection in $H^1_\Div(Q_L):=\{\phi\in H^1_\per(Q_L):\Div\phi=0\}$ onto the subspace $\{\phi: \D(\phi)=0\text{ in $\Ic_L$}\}$.
In other words, while $\phi_L^\circ$ depends linearly on the set $\Ic_L$ of particles, the multibody nonlinear hydrodynamic interactions can be fully encoded in this projection~$\pi_L$. This reformulation could slightly simplify some calculations but will not be used in the sequel.
\smallskip\item \emph{Reflection method:}\\
As introduced by Smoluchowski~\cite{Smoluchowski-11}, the so-called reflection method aims at rewriting the complicated projection $\pi_L$ as a cluster expansion only involving single-particle operators:
denoting by $\pi_L^n$ the projection in $H^1_\Div(Q_L)$ onto $\{\phi:\D(\phi)=0\text{ in $I_{n,L}$}\}$, and setting $q_L^n:=1-\pi_L^n$, the expansion takes the form $\pi_L=\Id-\sum_nq_L^n+\sum_{n\ne m}q_L^nq_L^m-\ldots$
Such an expansion appears to be very useful as single-particle operators $\{q_L^n\}_n$ are essentially explicit.
However, as shown in~\cite{Niethammer-Schubert-19,Hofer-19}, based on deterministic arguments,
convergence is only expected in the dilute regime --- more precisely, for a large enough minimal interparticle distance.
For this reason, such simplifying tools are systematically avoided in the sequel.
\qedhere
\end{enumerate}
\end{rems}

\subsection{Informal statement of the results}\label{sec:informal}
We henceforth consider a random ensemble of suspended particles in form of a stationary point process $\Pc_L=\{x_{n,L}\}_n$ with intensity~$\rho_L$ in the periodic tank~$Q_L$, constructed on a given probability space $(\Omega,\Pm)$, and we analyze both the corresponding mean settling speed and the fluctuations of individual velocities,
\begin{equation}\label{def:eff-vel}
\bar V_L:=\tfrac{e}{|e|}\cdot \expec{V_{n,L}},\qquad\sigma_L:=|\var{V_{n,L}}\!|^\frac12,
\end{equation}
in the large-volume limit $L\uparrow\infty$, where $\expec{\cdot}$ and $\var{\cdot}$ denote the expectation and the variance with respect to $\Pm$, respectively.
Note that a direct computation from~\eqref{eq:StokesL}--\eqref{eq:StokesL-BC3} shows that the averaged settling speed is proportional to the
Dirichlet form of the fluid velocity, cf.~\eqref{eq:def-V} below,
\begin{equation}\label{eq:V-energy}
\alpha_L|e|\bar V_L~~\overset{L\uparrow\infty}\sim~~\expec{|\nabla \phi_L|^2}.
\end{equation}
Our main result states that, for a mixing ensemble of particles without long-range order, the mean settling speed and velocity fluctuations are well-defined in the large-volume limit only in dimensions $d>2$ and $d>4$, respectively. More precisely, the following bounds are expected to be sharp,
\begin{equation}\label{eq:res1}
\frac{\bar V_L}{\rho_L|e|}\lesssim\left\{\begin{array}{lll}1&:&d>2;\\(\log L)^\frac12&:&d=2;\\L^\frac12&:&d=1;\end{array}\right.\qquad\text{and}\qquad
\frac{\sigma_L}{\rho_L|e|}\lesssim\left\{\begin{array}{lll}1&:&d>4;\\(\log L)^\frac12&:&d=4;\\L^\frac12&:&d=3;\end{array}\right.
\end{equation}
cf.~Theorems~\ref{th:main1}(i) and~\ref{th:main2}(i).
In particular, the boundedness of $\bar V_L$ for $d>2$ fully justifies Batchelor's analysis~\cite{Batchelor-72,Hinch-77,Feuillebois-84},
while the linear divergence of $\sigma_L^2$ for $d=3$ provides a rigorous version of the celebrated calculation by Caflisch and Luke~\cite{Caflisch-Luke} (and extends the results of~\cite{Gloria-19} to the present much more general setting, cf.~Section~\ref{sec:previous}).

\medskip\noindent
Next, we investigate the role of the hyperuniformity of the suspension and rigorously analyze how it leads to the screening of hydrodynamic interactions. 
Under a suitable functional-analytic version of hyperuniformity, we show that the critical dimensions in~\eqref{eq:res1} are shifted by $2$,
\begin{equation}\label{eq:res2}
\frac{\bar V_L}{\rho_L|e|}\lesssim1\qquad\text{and}\qquad
\frac{\sigma_L}{\rho_L|e|}\lesssim\left\{\begin{array}{lll}1&:&d>2;\\(\log L)^\frac12&:&d=2;\\L^\frac12&:&d=1;\end{array}\right.
\end{equation}
cf.~Theorems~\ref{th:main1}(ii) and~\ref{th:main2}(ii).
This rigorously justifies the heuristic calculations in dimension~$d=3$ by Hinch~\cite{Hinch-88} and Koch and Shaqfeh~\cite{KS-91}.

\medskip\noindent
In addition, whenever the averaged settling speed remains bounded, we make sense of an infinite-volume equation describing the limit of~\eqref{eq:StokesL}--\eqref{eq:StokesL-BC3}, cf.~Theorem~\ref{th:main1}.
We also deduce a homogenization result for sedimentation experiments, cf.~Theorem~\ref{th:hom}:
in the limit of a dense suspension of small particles (with suitably rescaled gravity) in a finite tank, the velocity field of the Stokes fluid with the suspension converges weakly to that of an effective Stokes fluid. The latter is characterized by an effective viscosity, which is shown to be independent of gravity. In particular, this effective viscosity is the same as for the corresponding colloidal (non-sedimenting) system that we studied in~\cite{DG-19}. The local behavior of the fluid is however drastically impacted by sedimentation, which is expressed in form of a corrector result.

\subsection{Relation to previous works}\label{sec:previous}
The mathematical literature on settling suspensions in viscous fluids is particularly scarce.
Most contributions have been investigating the (quasistatic) dynamics:
particle positions $\{x_{n,L}\}_n$ evolve according to the set of ODEs $\{\dot x_{n,L}=V_{n,L}\}_n$, where instantaneous velocities $\{V_{n,L}\}_n$ are computed from the steady Stokes problem~\mbox{\eqref{eq:StokesL}--\eqref{eq:StokesL-BC3}}.
The first result on this topic is due to Jabin and Otto~\cite{Jabin-Otto-04} and identifies the non-interacting regime, while more recent contributions by Höfer~\cite{Hofer-18} and Mecherbet~\cite{Mecherbet-19} have studied the mean-field limit.
Those works are restricted to a perturbative dilute regime (the particle density tends to zero as the particle radii with some scaling relation)
and they only rely on deterministic arguments in form of the reflection method; see also~\cite{Niethammer-Schubert-19,Hofer-19}.

\medskip\noindent
The analysis of velocity fluctuations further requires to capture stochastic cancellations, for which a probabilistic input is crucially needed. This was first performed
by the second author
in~\cite{Gloria-19} for a scalar version of the sedimentation problem, and the corresponding version of the Caflisch-Luke bound~\eqref{eq:res1} on velocity fluctuations was succesfully established. As in the present work, the mixing assumption on the point process $\{x_{n,L}\}_n$ was conveniently formulated in form of a multiscale functional inequality as we introduced in~\cite{DG1,DG2},
and the proof borrowed ideas from quantitative stochastic homogenization for divergence-form linear elliptic equations.
As opposed to the present contribution however, the approach was based on deterministic regularity properties for (a scalar version of) the Stokes equation with a suspension, in particular in form of a perturbative Green's function estimate, which  only holds under the assumption that the minimal interparticle distance is large enough. Diluteness was also used at several other instances, together with the scalar nature of the equation and the sphericity of the particles.

\medskip\noindent
The present work widely generalizes the result of~\cite{Gloria-19}:
\begin{enumerate}[\quad$\bullet$]
\item We fully relax the diluteness requirement. For that purpose,
we resort to a random version of regularity properties in form of annealed $\Ld^p$ regularity in the spirit of a recent work of the first author and Otto~\cite{DO1} for divergence-form linear elliptic equations with random coefficients.
In addition, the analysis of stochastic cancellations relies on a new, particularly efficient buckling argument, which exploits this annealed regularity and is inspired from~\cite{Otto-Tlse}. This approach also allows to treat non-spherical particles.
\smallskip\item Treating the vectorial case of the Stokes equation further requires to control the pressure. 
To this aim, we follow an approach based on local regularity~\cite{Galdi}, which we initiated in~\cite{DG-19} in the simpler setting of colloidal (non-sedimenting) suspensions.
\end{enumerate}
In addition, the present work studies for the first time the effect of hyperuniformity of the 
random ensemble of positions.
In line with our works on multiscale functional inequalities~\cite{DG1,DG2}, we introduce the notion of hyperuniform functional inequalities, which allows us to unravel additional stochastic cancellations in sedimentation.
More generally, this work constitutes to our knowledge the first example of a nonlinear physical system for which the hyperuniformity of the input is rigorously shown to improve the scalings.

\medskip
\subsection*{Notation}
\begin{enumerate}[$\bullet$]
\item We denote by $C\ge1$ any constant that only depends on the space dimension $d$ and on controlled constants appearing in the assumptions. We use the notation $\lesssim$ (resp.~$\gtrsim$) for $\le C\times$ (resp.~$\ge\frac1C\times$) up to such a multiplicative constant $C$. We write $\simeq$ when both $\lesssim$ and $\gtrsim$ hold. In an assumption, we write $\ll$ (resp.~$\gg$) for $\le\frac1C\times$ (resp.~$\ge C\times$) with some large enough constant $C$. We add subscripts to $C,\lesssim,\gtrsim,\simeq,\ll,\gg$ in order to indicate dependence on other parameters.
\smallskip\item The ball centered at $x$ of radius $r$ in $\R^d$ is denoted by $B_r(x)$, and we simply write $B(x):=B_1(x)$, $B_r:=B_r(0)$, and $B:=B_1(0)$.
\smallskip\item For a function $f$ we write $[f]_2(x):=(\fint_{B(x)}|f|^2)^{1/2}$ for its local moving quadratic average.
\item We set $\langle x\rangle:=(1+|x|^2)^{1/2}$ for $x\in\R^d$, and we similarly define $\langle\nabla\rangle=(1-\triangle)^{1/2}$.  We denote by $|x|_L$ the Euclidean distance between $x$ and $0$ modulo $L\Z^d$ (that is, the distance on $Q_L$ viewed as the flat torus $\R^d/L\Z^d$). We set $a\wedge b:=\min\{a,b\}$ for all $a,b\in\R$. We denote by $\sharp E$ the cardinality of a locally finite set $E$.
\smallskip\item We denote by $\D(u):=\frac12(\nabla u+(\nabla u)')$ the symmetrized gradient of $u$, by $\sigma(\phi,\Pi):=2\D(\phi)-\Pi\Id$ the Stokes stress tensor, by~$\nu$ the outward unit normal vector at particle boundaries, and by $\Md^\Skew$ the set of skew-symmetric $d\times d$ matrices.
\end{enumerate}

%%%%%%%%%%%%%%%%%%
%%%%%%%%%%%%%%%%%%

\medskip
\section{Main results}\label{sec:main-res}

We start with suitable assumptions on the random ensemble of suspended particles, including precise mixing and hyperuniformity assumptions, and then turn to the statement of the main results on the mean settling speed, velocity fluctuations, and homogenization.
Finally, for the reader's convenience, a heuristic proof of our results in the dilute regime is included in Section~\ref{sec:heur}, justifying the scalings and illustrating the use of hyperuniformity, while in Section~\ref{sec:comp} we underline the useful analogy with stochastic homogenization of linear elliptic equations.

\subsection{Assumptions}
The following general assumption makes precise the notion of convergence for the point process $\Pc_L$ in the periodic tank $Q_L$ in the large-volume limit $L\uparrow\infty$. We also always assume that the process is hardcore, with some uniform bound $\delta>0$.

\begin{asn}{H$_\delta$}{General conditions}\label{Hd}$ $\\
The family $(\Pc_L)_{L\ge1}$ of point processes is constructed on some probability space~$(\Omega,\Pm)$ and satisfies the following properties:
\begin{itemize}
\item \emph{Periodicity in law:} For all $L\ge1$, the point process $\Pc_L=\{x_{n,L}\}_n$ is defined on the periodic cell $Q_L:=(-\frac L2,\frac L2]^d$ and is stationary with respect to shifts on the latter.\footnote{More precisely, as is standard in the field, e.g.~\cite{PapaVara} or~\cite[Section~7]{JKO94}, stationarity is understood as follows: there exists a measure-preserving group action $\{\tau_{L,x}\}_{x\in Q_L}$ of $(\R^d/L\Z^d,+)$ on the probability space~$(\Omega,\Pm)$ such that $\Pc_L^\w+x = \Pc_L^{\tau_{L,x} \w}$ for all $x,\w$.}
\item \emph{Stabilization:} For any compact set $K\subset\R^d$ the restricted point set~$\Pc_L\cap K$ converges almost surely as $L\uparrow\infty$. We denote by $\Pc$ the limiting point process, which is assumed stationary (on $\R^d$) and ergodic, and we denote by $\Ic:=\cup_nI_n$ the corresponding particle suspension.
\item \emph{Hardcore condition:} For all $L\ge1$ the point process $\Pc_L$ satisfies
\[\qquad\qquad\inf_{m\ne n}|x_m-x_n|_L\,\ge\,2(1+\delta)\quad\text{almost surely}.\qedhere\]
\end{itemize}
\end{asn}

Before stating mixing and hyperuniformity assumptions, we recall the following standard definition of the pair correlation function of the stationary random point process~$\Pc_L$.
\begin{defin}[Pair correlation function]\label{def:corr-fct}
The \emph{intensity} of $\Pc_L$ is defined by
\[\rho_L:=\expecm{L^{-d}\,\sharp\Pc_L},\]
which is related to the volume fraction $\lambda_L$ of the suspension $\Ic_L$ via $\expec{\lambda_L}=|B|\rho_L$, cf.~\eqref{eq:def-lamL}.
The \emph{pair density function} $f_{2,L}:Q_L\to\R^+$ of $\Pc_L=\{x_{n,L}\}_n$ is defined via the following relation, for all $\zeta\in C^\infty_\per(Q_L\times Q_L)$,
\[\E\bigg[\sum_{n\ne m}\zeta(x_{n,L},x_{m,L})\bigg]=\rho_L^2\iint_{Q_L\times Q_L}\zeta(x,y)\,f_{2,L}(x-y)\,dxdy.\]
The \emph{pair correlation function} is defined as $g_{2,L}:=\rho_L^{-2}(f_{2,L}-\rho_L^2)$.
The \emph{total pair correlation function} of $\Pc_L$  is defined by $h_{2,L}(x):=g_{2,L}(x)+\rho_L^{-1}\delta(x)$ and is characterized by the following relation, for all $\zeta\in C^\infty_\per(Q_L)$,
\begin{equation}\label{var-hyper}
\Var\bigg[\sum_{n}\zeta(x_{n,L})\bigg]=\rho_L^2\iint_{Q_L\times Q_L}\zeta(x)\zeta(y)\,h_{2,L}(x-y)\,dxdy.\qedhere
\end{equation}
\end{defin}

After an appropriate mechanical mixing of the particle suspension, the distribution of particle positions in the fluid typically displays fast decaying correlations and we naturally consider the following type of condition.

\begin{asn}{Mix}{Mixing condition}\label{Mix}$ $\\
The pair correlation function $g_{2,L}$ of $\Pc_L$ is integrable in the sense of
\[\sup_{L\ge1}\int_{Q_L}|g_{2,L}|\,<\,\infty.\qedhere\]
\end{asn}

Next, in the spirit of the discussion in Section~\ref{sec:phys}, see also~\cite{Hinch-88,KS-91}, we consider the assumption that the sedimenting suspension displays long-range structural order such that density fluctuations are suppressed, while still displaying fast decaying correlations. This is formalized through the notion of hyperuniformity, which we define as follows in the periodized setting. We refer to Appendix~\ref{sec:hyper} for a detailed motivation and some reformulations.

\begin{asn}{Hyp}{Mixing and hyperuniformity conditions}\label{Hyp}$ $\\
The pair correlation function $g_{2,L}$ of $\Pc_L$ has fast decay in the sense of
\[\sup_{L\ge1}\int_{Q_L}|x|_L^2\,|g_{2,L}(x)|\,dx\,<\,\infty,\]
and hyperuniformity holds in the sense that the total pair correlation $h_{2,L}$ satisfies
\[\sup_{L\ge1}\,L^2\,\Big|\int_{Q_L}h_{2,L}\Big|\,<\, \infty,\]
which is viewed as the approximate vanishing of the so-called structure factor at small wavenumber, cf.~Appendix~\ref{sec:hyper}.
\end{asn}

While the above assumptions~\ref{Mix} and~\ref{Hyp} are expressed in terms of the pair correlation function,
stronger versions quickly become necessary to analyze the effects of nonlinear multibody interactions.
First, the mixing assumption~\ref{Mix} should be replaced by a stronger mixing assumption. As in~\cite{Gloria-19}, we choose to appeal to the following nonlinear assumption in form of a \emph{multiscale variance inequality}, cf.~\cite{DG1,DG2}. As shown in~\cite[Section~3]{DG2}, examples of point processes that satisfy this assumption include for instance (periodized) hardcore Poisson processes and random parking processes.

\begin{asn}{Mix$^+$}{Improved mixing condition}\label{Mix+}$ $\\
There exists a non-increasing weight function $\pi:\R^+\to\R^+$ with superalgebraic decay (that is, $\pi(\ell)\le C_p\langle\ell\rangle^{-p}$ for all $p\ge1$) such that
for all $L\ge1$ the point process $\Pc_L$ satisfies, for all $\sigma(\Pc_L)$-measurable random variables~$Y(\Pc_L)$,
\begin{equation}\label{eq:SGL}
\var{Y(\Pc_L)}\,\le\,\expec{\int_0^L\int_{Q_L}\Big(\partial^{\operatorname{osc}}_{\Pc_L,B_\ell(x)}Y(\Pc_L)\Big)^2dx\,\langle\ell\rangle^{-d}\pi(\ell)\,d\ell},
\end{equation}
where the ``oscillation'' derivative $\partial^{\operatorname{osc}}$ is defined by
\begin{align*}
&\partial^{\operatorname{osc}}_{\Pc,B_\ell(x)}Y(\Pc):=\supess\Big\{Y(\Pc'):\Pc'|_{Q_L\setminus B_\ell(x)}=\Pc|_{Q_L\setminus B_\ell(x)}\Big\}\\
&\hspace{5cm}-\infess\Big\{Y(\Pc'):\Pc'|_{Q_L\setminus B_\ell(x)}=\Pc|_{Q_L\setminus B_\ell(x)}\Big\}.\qedhere
\end{align*}
\end{asn}

Similarly, we strengthen the hyperuniformity assumption~\ref{Hyp} in form of a suitable variance inequality. Intuitively, while the ``oscillation'' derivative in~\ref{Mix+} above allows to locally add and move points, the carr\'e-du-champ below only accounts (at leading order) for moving points, but not adding or removing any, in accordance with the definition of hyperuniformity as suppressing density fluctuations.
We refer to Appendix~\ref{sec:hyper} for a detailed discussion.

\begin{asn}{Hyp$^+$}{Improved mixing and hyperuniformity conditions}\label{Hyp+}$ $\\
There exists a non-increasing weight function $\pi:\R^+\to\R^+$ with superalgebraic decay
such that for all $L\ge1$ the point process $\Pc_L$ satisfies, for all $\sigma(\Pc_L)$-measurable random variables~$Y(\Pc_L)$,
\begin{equation}\label{eq:SGL-hyp}
\var{Y(\Pc_L)}\,\le\,\expec{\int_0^L\int_{\R^d}\Big(\partial^{\operatorname{hyp}}_{\Pc_L,B_\ell(x)}Y(\Pc_L)\Big)^2dx\,\langle\ell\rangle^{-d}\pi(\ell)\,d\ell},
\end{equation}
where the ``hyperuniform'' derivative is given by 
$$\partial^{\operatorname{hyp}}_{\Pc_L,B_\ell(x)}Y(\Pc_L)=\partial^{\operatorname{mov}}_{\Pc_L,B_\ell(x)}Y(\Pc_L)+L^{-1}\partial^{\operatorname{osc}}_{\Pc_L,B_\ell(x)}Y(\Pc_L)$$ and the 
``move-point'' derivative $\partial^{\operatorname{mov}}$ is defined by
\begin{align*}
&\partial^{\operatorname{mov}}_{\Pc,B_\ell(x)}Y(\Pc):=\supess\Big\{Y(\Pc'):\Pc'|_{Q_L\setminus B_\ell(x)}=\Pc|_{Q_L\setminus B_\ell(x)},\,\sharp\Pc'|_{B_\ell(x)}=\sharp\Pc|_{B_\ell(x)}\Big\}\\
&\hspace{2cm}-\infess\Big\{Y(\Pc'):\Pc'|_{Q_L\setminus B_\ell(x)}=\Pc|_{Q_L\setminus B_\ell(x)},\,\sharp\Pc'|_{B_\ell(x)}=\sharp\Pc|_{B_\ell(x)}\Big\}.\qedhere
\end{align*}
\end{asn}

\subsection{Mean settling speed}
Our first main result concerns the mean settling speed $\bar V_L$, cf.~\eqref{def:eff-vel}, which is shown to be well-defined under the mixing condition~\ref{Mix} only in dimension $d>2$ while under hyperuniformity~\ref{Hyp} it is well-defined in all dimensions.
We also make sense of a limiting equation in the large-volume limit $L\uparrow\infty$.
The proof is surprisingly elementary, although fully taking into account multibody hydrodynamic interactions: the argument is solely based on $\Ld^2$~theory, and the 
standard weak form~\ref{Hyp} of hyperuniformity is enough to unravel the screening.

\begin{theor1}[Mean settling speed]\label{th:main1}
Let the random point processes $(\Pc_L)_{L\ge1}$ satisfy the general assumption~\ref{Hd} for some $\delta>0$.
\begin{enumerate}[(i)]
\item Under the mixing assumption~\emph{\ref{Mix}}, there holds for all $L\ge1$,
\begin{equation*}
\qquad\frac{\bar V_L}{\rho_L|e|}\lesssim\left\{\begin{array}{lll}1&:&d>2;\\(\log L)^\frac12&:&d=2;\\L^\frac12&:&d=1.\end{array}\right.
\end{equation*}
More precisely, in dimension $d>2$,  for almost all $\w$,
\begin{equation}
\qquad\lim_{L\uparrow\infty}\frac1{\sharp\Pc_L}\sum_n\tfrac{e}{|e|}\cdot V_{n,L}^\w~=~\lim_{L\uparrow\infty}\bar V_L~=~\bar V,\label{eq:barV}
\end{equation}
in terms of
\begin{equation}\label{eq:def-alpha}
\qquad\bar V:=\frac1{\alpha |e|}\expec{|\nabla\phi|^2},\qquad \alpha:=\frac\lambda{1-\lambda},\qquad\lambda:=\expec{\mathds1_{\Ic}}=\lim_{L\uparrow\infty}\lambda_L^\w,
\end{equation}
where the random field $\phi\in \Ld^2(\Omega;H^1_\loc(\R^d)^d)$ denotes the unique solution of the following infinite-volume problem:
\begin{enumerate}[\quad$\bullet$]
\smallskip\item For almost all $\w$ the realization $\phi^\w\in H^1_\loc(\R^d)^d$ satisfies in the weak sense, for some pressure field $\Pi^\w\in\Ld^2_\loc(\R^d)$,
\begin{equation}\label{eq:cor-sed}
\qquad\quad\left\{\begin{array}{ll}
-\triangle \phi^\w+\nabla \Pi^\w=-\alpha e,&\text{in $\R^d\setminus\Ic^\w$},\\
\Div \phi^\w=0,&\text{in $\R^d\setminus\Ic^\w$},\\
\D(\phi^\w)=0,&\text{in $\Ic^\w$},\\
e|I_n^\w|+\int_{\partial I_n^\w}\sigma(\phi^\w,\Pi^\w)\nu=0,&\forall n,\\
\int_{\partial I_n^\w}\Theta\nu\cdot\sigma(\phi^\w,\Pi^\w)\nu=0,&\forall n,\,\forall\Theta\in\Md^\Skew.
\end{array}\right.
\end{equation}
\item The gradient field $\nabla\phi$ and the pressure field $\Pi$ are stationary, they have vanishing expectations $\expecm{\nabla\phi}=0$ and $\expecm{\Pi\mathds1_{\R^d\setminus\Ic}}=0$, they have bounded second moments
\[\qquad\quad\expecm{|\nabla\phi|^2}+\expecm{|\Pi|^2\mathds1_{\R^d\setminus\Ic}}\lesssim|e|^2,\]
and $\phi$ is anchored at the origin in the sense of $\fint_B \phi^\w=0$ for almost all $\w$.
\end{enumerate}
\smallskip\item Under the mixing and hyperuniformity assumption~\ref{Hyp}, in any dimension $d\ge1$, there holds for all $L\ge1$,
\begin{equation*}
\qquad\frac{\bar V_L}{\rho_L|e|}\lesssim1,
\end{equation*}
 the limit~\eqref{eq:barV}  holds, and the infinite-volume problem~\eqref{eq:cor-sed} is always well-posed.
\qedhere
\end{enumerate}
\end{theor1}

\subsection{Velocity fluctuations}
Our next main result concerns the estimation of velocity fluctuations, which
requires a much finer use of stochastic cancellations.
In this context, the analysis of the effects of nonlinear multibody interactions requires a suitable strengthening of the standard mixing and hyperuniformity assumptions~\ref{Mix} and~\ref{Hyp},
and we rather appeal to their nonlinear functional-analytic versions~\ref{Mix+} and~\ref{Hyp+}. In addition, this result crucially relies on annealed regularity properties for the steady Stokes equation in presence of a random suspension, cf.~Section~\ref{sec:reg0}.
Rather than focussing on the variance~$\sigma_L^2$, cf.~\eqref{def:eff-vel}, we further consider higher moments of the velocity field $\phi_L$.

\begin{theor1}[Velocity fluctuations]\label{th:main2}
Let the random point processes $(\Pc_L)_{L\ge1}$ satisfy the general assumption~\ref{Hd} for some $\delta>0$.
\begin{enumerate}[(i)]
\item Under the improved mixing assumption~\ref{Mix+}, in any dimension $d>2$, we have for all $L\ge1$ and $1\le p<\infty$,
\begin{equation}\label{eq:nablaphi-bnd}
\qquad\|\nabla\phi_L\|_{\Ld^{p}(\Omega)}\lesssim_p |e|,
\end{equation}
and
\begin{equation}\label{eq:phi-bnd}
\qquad\|\phi_L(x)\|_{\Ld^{p}(\Omega)}\lesssim_p |e|\times\begin{cases}
1,&\text{if $d>4$};\\
\log(2+|x|)^\frac12,&\text{if $d=4$};\\
\langle x\rangle^\frac12,&\text{if $d=3$}.
\end{cases}
\end{equation}
In particular, in dimension $d>4$, up to relaxing the anchoring condition, the solution~$\phi$ of the infinite-volume problem~\eqref{eq:cor-sed} can be uniquely constructed as a stationary object with vanishing expectation.
\smallskip\item Under the improved mixing and hyperuniformity assumption~\ref{Hyp+}, in any dimension $d\ge1$, we have for all $L\ge1$ and $1\le p<\infty$,
\begin{equation}\label{eq:nablaphi-bnd+}
\qquad\|\nabla\phi_L\|_{\Ld^{p}(\Omega)}\lesssim_p |e|,
\end{equation}
and
\begin{equation}\label{eq:phi-bnd+}
\qquad\|\phi_L(x)\|_{\Ld^{p}(\Omega)}\lesssim_p |e|\times\begin{cases}
1,&\text{if $d>2$};\\
\log(2+|x|)^\frac12,&\text{if $d=2$};\\
\langle x\rangle^\frac12,&\text{if $d=1$}.
\end{cases}
\end{equation}
In particular, in dimension $d>2$, up to relaxing the anchoring condition, the solution~$\phi$ of the infinite-volume problem~\eqref{eq:cor-sed} can be uniquely constructed as a stationary object with vanishing expectation.
\qedhere
\end{enumerate}
\end{theor1}

Stochastic cancellations are conveniently exploited in form of the fluctuation scaling of large-scale averages of $\nabla \phi_L$, from which the above moment bounds are deduced as consequences.
More precisely, we show that,
under the improved mixing assumption~\ref{Mix+}, in dimension $d>2$, fluctuations of~$\nabla\phi_L$ miss the usual central limit theorem scaling by a length scale (as encoded by the norm of the test function in $\Ld^{\frac{2d}{d+2}}$ instead of $\Ld^2$):  for all $g\in C^\infty_\per(Q_L)^{d\times d}$ and $1\le p<\infty$, we have
\begin{equation}\label{eq:fluc-scaling-1}
\Big\|\int_{Q_L}g:\nabla \phi_{L}\Big\|_{\Ld^{2p}(\Omega)}\,\lesssim_p\,\|[\avoir g]_2\|_{\Ld^{\frac{2d}{d+2}}(Q_L)},
\end{equation}
while under the improved mixing and hyperuniformity assumption~\ref{Hyp+} in any dimension $d\ge1$ the usual central limit theorem scaling is recovered in form of
\begin{equation}\label{eq:fluc-scaling-2}
\Big\|\int_{Q_L}g:\nabla \phi_{L}\Big\|_{\Ld^{2p}(\Omega)}\,\lesssim_p\,\|\avoir g\|_{\Ld^2(Q_L)}.
\end{equation}
(Note that the additional gradient $\langle \nabla \rangle^\frac12$ in the bounds plays no role on large scales.)

\subsection{Homogenization result}
We consider a steady Stokes fluid in a bounded domain with internal forces and a dense suspension of small particles: we analyze the non-dilute homogenization regime with vanishing particle radii but fixed volume fraction $\lambda_\e^\w\to\lambda>0$.
Suspended particles in the fluid act as obstacles, hindering the fluid flow and therefore increasing the flow resistance, that is, the viscosity. The fluid with the suspension is then expected to behave approximately like a Stokes fluid with some effective viscosity --- which was the basis of Perrin's celebrated experiment to estimate the Avogadro number as inspired by Einstein's PhD thesis~\cite{Einstein-06}. The upcoming theorem shows that the effective viscosity for a sedimenting suspension exactly coincides with that for a colloidal (non-sedimenting) suspension, although the local behavior of the fluid flow is drastically different as expressed via the \emph{corrector result}.
This constitutes the counterpart for sedimenting suspensions of our recent work~\cite[Theorem~1]{DG-19} on colloidal suspensions.
Strikingly, although the result is only qualitative, it requires strong mixing conditions and quantitative estimates, in particular relying on Theorems~\ref{th:main1} and~\ref{th:main2} above, as opposed to the much simpler situation of colloidal suspensions in~\cite{DG-19}. An optimal convergence rate (which relies on a further use of annealed regularity) will be given in the companion article~\cite{DG-20+}.
This homogenization result is the very first of its kind in the context of sedimentation.
In particular, as opposed to contributions such as~\cite{Hillairet-18,CH-20,HMS-19}, where particle velocities are prescribed a priori rather than deduced from the steady Stokes equation, there is no Brinkman term in the effective equation. In addition, we consider a non-dilute regime (cf.~$\lambda_\e^\w\to\lambda>0$), where the multiparticle effect of hydrodynamic interactions is not negligible.

\medskip\noindent
We start with some notation: Given a reference bounded Lipschitz domain $U$, we consider the set $\Nc_\e^\w(U)$ of all indices $n$ 
such that $\e(I_n^\w+\delta B)\subset U$, and we define the corresponding rescaled particle suspension $\Ic_\e^\w(U)$ in $U$,
\[\Ic_\e^\w(U):=\bigcup_{n\in\Nc_\e^\w(U)}\e I_n^\w.\]
Note that particles in this collection are at least at distance $\e\delta$ from one another and from the boundary $\partial U$.

\begin{theor1}[Homogenization of steady Stokes flow with sedimenting suspension]\label{th:hom}
Let the stationary random point process $\Pc$ be as in~\emph{\ref{Hd}}. Under the improved mixing assumption~\ref{Mix+} in dimension $d>2$, or under the improved mixing and hyperuniformity assumption~\ref{Hyp+} in any dimension $d\ge1$,
given a bounded Lipschitz domain $U\subset\R^d$,  an internal force $f\in\Ld^2(U)$, and gravity $e\in\R^d$, we consider for all $\e>0$ and $\w\in\Omega$ the unique weak solution $u_\e^\w\in H^1_0(U)$ of the following steady Stokes problem,
\begin{equation}\label{eq:Stokes}
\left\{\begin{array}{ll}
-\triangle u_\e^\w+\nabla P_\e^\w=f,&\text{in $U\setminus\Ic_\e^\w(U)$},\\
\Div u_\e^\w=0,&\text{in $U\setminus\Ic_\e^\w(U)$},\\
u_\e^\w=0,&\text{on $\partial U$},\\
\D(u_\e^\w)=0,&\text{in $\Ic_\e^\w(U)$},\\
\e^{d-1}e |I_n^\w|+\int_{\e\partial I_n^\w}\sigma(u_\e^\w,P_\e^\w)\nu=0,&\forall n\in\Nc_\e^\w(U),\\
\int_{\e\partial I_n^\w}\Theta\nu\cdot\sigma(u_\e^\w,P_\e^\w)\nu=0,&\forall n\in\Nc_\e^\w(U),\,\forall\Theta\in\Md^\Skew.
\end{array}\right.
\end{equation}
Then the following results hold,
\begin{enumerate}[(i)]
\item \emph{Homogenization:} For almost all $\w$, $u_\e^\w\cvf\bar u$ weakly in $H^1_0(U)$, where $\bar u\in H^1_0(U)$ is the unique weak solution of the  homogenized Stokes problem
\begin{equation}\label{eq:Stokes-hom}
\quad\left\{\begin{array}{ll}
-\Div\bar\Bb \D( \bar u)+\nabla\bar P=(1-\lambda) f ,&\text{in $U$},\\
\Div\bar u=0,&\text{in $U$},\\
\bar u=0,&\text{on $\partial U$},
\end{array}\right.
\end{equation}
where $\lambda:=\expec{\mathds 1_{\Ic}}$ is the intensity of the inclusion process, and the effective viscosity tensor $\bar\Bb$ is positive definite on trace-free matrices and is given by
\begin{equation}\label{eq:def-B}
\quad\bar\Bb:= \sum_{E,E'\in\Ec}(E'\otimes E)~\expec{(\nabla\psi_{E'}+E'):(\nabla\psi_{E}+E)},
\end{equation}
where the sum runs over an orthonormal basis $\Ec$ of trace-free $d\times d$ matrices, and where the random field $\psi_E\in \Ld^2(\Omega;H^1_\loc(\R^d)^{d\times d})$ denotes the unique solution of the following (infinite-volume) ``colloidal corrector'' problem:
\begin{enumerate}[\quad$\bullet$]
\item For almost all $\w$, the realization $\psi_E^\w\in H^1_\loc(\R^d)^d$ satisfies in the weak sense, for some pressure field $\Sigma_E^\w\in\Ld^2_\loc(\R^d)$,
\begin{equation}\label{eq:corr-Stokes}
\qquad\qquad\left\{\begin{array}{ll}
-\triangle\psi_E^\w+\nabla\Sigma_E^\w=0,&\text{in $\R^d\setminus\Ic^\w$},\\
\Div\psi_E^\w=0,&\text{in $\R^d\setminus\Ic^\w$},\\
\D\big(\psi_E^\w+E(x-x_n^\w)\big)=0,
&\text{in $\Ic^\w$},\\
\fint_{\partial I_n^\w}\sigma\big(\psi_E^\w+E(x-x_n^\w),\Sigma_E^\w\big)\nu=0,&\forall n,\\
\fint_{\partial I_n^\w}\Theta\nu\cdot\sigma\big(\psi_E^\w+E(x-x_n^\w),\Sigma_E^\w\big)\nu=0,&\forall n,\,\forall\Theta\in\Md^\Skew.
\end{array}\right.
\end{equation}
\item The gradient field $\nabla\psi_E$ and the pressure field $\Sigma_E$ are stationary, they have vanishing expectations $\expecm{\nabla\psi_E}=0$ and $\expecm{\Sigma_E\mathds1_{\R^d\setminus\Ic}}=0$, they have bounded second moments, and $\psi_E$ satisfies the anchoring condition $\fint_B \psi_E^\w=0$ for almost all $\w$.
\end{enumerate}
\smallskip\item \emph{Convergence of pressure:}
For almost all~$\w$, the pressure field $P_\e^\w$ converges weakly in~$\Ld^2(U)$ up to suitable renormalization in the following sense,
\begin{multline*}
\qquad\bigg((P_\e^\w-\tfrac1\e \lambda e \cdot x) -\fint_{U\setminus\Ic_\e^\w(U)}(P_\e^\w-\tfrac1\e \lambda e \cdot x) \bigg)\mathds1_{U\setminus\Ic_\e^\w(U)}\\
~\cvf~(1-\lambda)\bigg(\bar P+\bb:\D(\bar u)-\fint_U\bar P\bigg),
\end{multline*}
where $\bb$ is a symmetric trace-free matrix given for all $E\in\Md_0^\Sym$ by
\begin{equation}\label{eq:def-b}
\bb:E\,:=\,\frac1d\,\E\bigg[{\sum_n\frac{\mathds1_{I_n}}{|I_n|}\int_{\partial I_n}(x-x_n)\cdot\sigma(\psi_E+Ex,\Sigma_E)\nu}\bigg];
\end{equation}
\item \emph{Corrector result:}
Provided $f\in\Ld^p(U)$ for some $p>d$, for almost all~$\w$, a corrector result holds in the following form
for the velocity field $u_\e^\w$,
\[\quad\bigg\|u_\e^\w-\bar u-\e(1-\lambda)\phi^\w(\tfrac\cdot \e)-\e\sum_{E\in\Ec}\psi_E^\w(\tfrac\cdot\e)\nabla_E\bar u\bigg\|_{H^1(U)}\,\to\,0,\]
and for the pressure field $P_\e^\w$,
\begin{multline*}
\qquad\inf_{\kappa\in\R}\bigg\|P_\e^\w-\tfrac1\e \lambda e \cdot x-\bar P-\bb:\D(\bar u)\\
-(1-\lambda)(\Pi^\w \mathds1_{\R^d\setminus\Ic^\w})(\tfrac\cdot\e)-\sum_{E\in\Ec}(\Sigma_E^\w\mathds1_{\R^d\setminus\Ic^\w})(\tfrac\cdot\e)\nabla_E\bar u-\kappa\bigg\|_{\Ld^2(U\setminus\Ic_\e^\w(U))}\,\to\,0,
\end{multline*}
where $(\phi,\Pi)$ is  the (infinite-volume) ``sedimentation corrector'' of Theorem~\ref{th:main1}.
\qedhere
\end{enumerate}
\end{theor1}
\begin{rem}
We briefly comment on the scaling in~\eqref{eq:Stokes}.
In the force balance, while gravity appears as a bulk term $\e^d e|I_n|$ and the drag force as a surface term $\int_{\e\partial I_n}\sigma(u_\e,P_\e)\nu$, gravity is naturally rescaled by a factor $\frac1\e$ so that both contributions have the same order of magnitude.
This appears as the natural setup for sedimentation experiments: by scaling, it is equivalent to considering a fixed gravity and particles with fixed size and volume fraction in a tank of increasing size.
As gravity is rescaled by a diverging factor $\frac1\e$, it is compensated by a diverging backflow $-\frac1\e \lambda e$ generated by the pressure, which explains why the convergence of the pressure only holds up to this corresponding renormalization.
\end{rem}

\subsection{Extensions}

We mention possible relaxations of the set of general assumptions on the suspension; details are omitted.
\begin{enumerate}[\quad$\bullet$]
\item \emph{Polydisperse suspensions:} Spherical particles $I_{n,L}=B(x_{n,L})$ can be replaced by other bounded shapes, or even by iid bounded random shapes (provided that particle boundaries are uniformly of class~$C^2$).
In the hyperuniform setting, if particles have varying volumes, the condition on the conservation of the number of points in the move-point derivative has naturally to be replaced by a condition on the conservation of the total volume of the particles upon perturbation.
\smallskip
\item \emph{Weakened hardcore condition:} The deterministic hardcore condition in~\ref{Hd} could be relaxed into a lower bound of the type
\[\E\Big[\mathds1_{x_n\in B}\sup_{m:m\ne n}\big(|x_m-x_n|-2\big)^{-r}\Big]\,<\,\infty,\]
for some large enough power $r\ge1$, at the price of appealing more substantially to Meyers type estimates.
\end{enumerate}

\subsection{Heuristic proof: the linear response}\label{sec:heur}
The main difficulty of any rigorous approach to sedimentation is to account for multibody nonlinear hydrodynamic interactions.
In this paragraph, we briefly show how the scalings for the mean settling speed and for velocity fluctuations can be motivated by a formal linear analysis in the dilute regime and we explicitly emphasize the role of hyperuniformity in this simple setting.
This constitutes a reformulation of the formal calculations by Batchelor~\cite{Batchelor-72}, Caflisch and Luke~\cite{Caflisch-Luke} (see also \cite[Section~1.3]{Gloria-19}), and Koch and Shaqfeh~\cite{KS-91}. A heuristic discussion of the nonlinear contribution is postponed to Section~\ref{sec:comp}.

\medskip\noindent
Neglecting the multibody interactions in the dilute regime $\lambda_L\ll1$, the Stokes problem~\eqref{eq:StokesL}--\eqref{eq:StokesL-BC3} is formally reduced to $\phi_L\approx \phi_L^\circ$, see also Remark~\ref{rem:proj}(a),
\begin{equation}\label{eq:linear-approx}
-\triangle\phi_L^\circ+\nabla\Pi_L^\circ=\Big(\sum_n\mathds1_{I_{n,L}}-\lambda_L\Big)e,\qquad\Div\phi_L^\circ=0,\qquad\text{in $Q_L$},
\end{equation}
and particle velocities are approximated by $V_{n,L}\approx V_{n,L}^\circ:=\int_{I_{n,L}}\phi_L^\circ$.
For this simplified linear model, in view of~\eqref{eq:V-energy}, the mean settling speed is explicitly given by
\begin{equation}\label{eq:pre-bnd-barV}
\lambda_L|e|\bar V_L^\circ~~\overset{L\uparrow\infty}\sim~~\expec{|\nabla\phi_L^\circ|^2}\,=\,\E\bigg[{\Big|\sum_n\nabla U_L(x_{n,L})\Big|^2}\bigg]\,=\,\bigg|\!\Var\bigg[{\sum_n\nabla U_L(x_{n,L})}\bigg]\bigg|,
\end{equation}
in terms of the periodic (locally averaged) Stokeslet,
\[-\triangle U_L+\nabla P_L=\big(\mathds1_B-L^{-d}|B|\big)e,\qquad\text{in $Q_L$}.\]
Likewise, velocity fluctuations are formally computed as follows,
\begin{equation}\label{eq:pre-bnd-sigma}
(\sigma_L^\circ)^2\,\approx\,|\var{\phi^\circ_L}\!|=\bigg|\!\Var\bigg[\sum_nU_L(x_{n,L})\bigg]\bigg|.
\end{equation}
In order to estimate~\eqref{eq:pre-bnd-barV} and~\eqref{eq:pre-bnd-sigma}, we distinguish between random point processes $\Pc_L$ without or with long-range order in form of hyperuniformity.

\begin{enumerate}[\quad$\bullet$]
\item \emph{Case without long-range order:}\\
If correlations of the point process $\Pc_L=\{x_{n,L}\}_n$ display an integrable decay~\ref{Mix}, the following variance estimate is easily obtained, cf.~\eqref{eq:var-est00},
\begin{equation}\label{eq:nohyp}
\qquad\Var\bigg[\sum_n\zeta(x_{n,L})\bigg]~\lesssim~\rho_L^2\int_{Q_L}|\zeta|^2.
\end{equation}
Recalling that the decay of the Stokeslet is given by
\[|U_L(x)|\lesssim|e|(1+|x|_L)^{2-d},\qquad|\nabla U_L(x)|\lesssim|e|(1+|x|_L)^{1-d},\]
identities~\eqref{eq:pre-bnd-barV} and~\eqref{eq:pre-bnd-sigma} then lead to the expected scaling,
\begin{eqnarray*}
\qquad\lambda_L|e|\bar V_L^\circ~&\lesssim~\rho_L^2\displaystyle\int_{Q_L}|\nabla U_L|^2~&\lesssim~\rho_L^2|e|^2,\qquad\text{provided $d>2$},\\
\qquad(\sigma_L^\circ)^2~&\lesssim~\rho_L^2\displaystyle\int_{Q_L}|U_L|^2~&\lesssim~\rho_L^2|e|^2,\qquad\text{provided $d>4$}.
\end{eqnarray*}
\smallskip\item \emph{Case with hyperuniformity:}\\
Hyperuniformity is naturally expected to translate into a strong improvement of~\eqref{eq:nohyp}: indeed, given an independent copy $\{x_{n,L}'\}_n$ of $\{x_{n,L}\}_n$, we may represent
\[\qquad\Var\bigg[\sum_n \zeta(x_{n,L})\bigg]\,=\,\E\E'\bigg[\frac12\Big(\sum_n \zeta(x_{n,L})-\sum_n \zeta(x_{n,L}')\Big)^2\bigg],\]
and the suppression of density fluctuations would formally allow to locally couple the point sets~$\{x_{n,L}'\}_n$ and $\{x_{n,L}\}_n$, hence only compare close points, which would ideally translate into the gain of a derivative: for all $\zeta\in C^\infty_\per(Q_L)$ with $\int_{Q_L}\zeta=0$,
\begin{equation}\label{eq:withhyp}
\qquad\Var\bigg[\sum_n \zeta(x_{n,L})\bigg]\,\lesssim\,\rho_L^2\int_{Q_L}|\nabla \zeta|^2.
\end{equation}
As shown in Lemma~\ref{lem:var-lin}, this functional inequality is indeed essentially equivalent to hyperuniformity together with a suitable decay of correlations.
Identities~\eqref{eq:pre-bnd-barV} and~\eqref{eq:pre-bnd-sigma} then lead to the expected improved scaling,
\begin{eqnarray*}
\qquad\lambda_L|e|\bar V_L^\circ~&\lesssim~\rho_L^2\displaystyle\int_{Q_L}|\nabla^2 U_L|^2~&\lesssim~\rho_L^2|e|^2,\qquad\text{for any $d\ge1$},\\
\qquad(\sigma_L^\circ)^2~&\lesssim~\rho_L^2\displaystyle\int_{Q_L}|\nabla U_L|^2~&\lesssim~\rho_L^2|e|^2,\qquad\text{provided $d>2$}.
\end{eqnarray*}
\end{enumerate}

\subsection{Analogies to stochastic homogenization: the nonlinear response}\label{sec:comp}
This section is devoted to
analogies between the sedimentation problem and corrector equations
for linear elliptic operators, both in divergence form or in non-divergence form (cf.~\cite{GO1,GNO1,AKM2,AKM-book,AL-17} e.g.).
In particular, we explain the differences in the corresponding critical dimensions, as well as the surprising fact that the sedimentation problem benefits from hyperuniformity whereas
elliptic corrector equations do not in general.
We first recall the corrector equations for linear elliptic operators both in divergence form, cf.~\eqref{eq:compB}, and in non-divergence form, cf.~\eqref{eq:compC}, associated with a uniformly elliptic random coefficient field~$\Aa_L$; to make the comparison with the sedimentation problem more transparent, we also introduce some hybrid corrector equation, cf.~\eqref{eq:compD}:
\begin{align*}
-\Div(\Aa_L\nabla\psi_{1})&\,=\,\Div(\Aa_L e),\label{eq:compB}\tag{Div}\\
 -\Aa_L:\nabla^2\psi_{2}&\,=\,(\Aa_L-\tilde\Aa_L):E,\label{eq:compC}\tag{NDiv}\\
-\Div(\Aa_L\nabla\psi_{3})&\,=\,(\Aa_L-\expec{\Aa_L}):E,\label{eq:compD}\tag{Hyb}
\end{align*}
for some fixed directions $e \in \R^d$ and $E \in \R^{d\times d}$, and some suitable constant $\tilde\Aa_L\in\R^{d\times d}$ that ensures solvability. In each of these corrector equations, the right-hand side displays a linear random input either in divergence or in non-divergence form, while the nonlinearity with respect to randomness arises from the solution operator associated with the elliptic operator in the left-hand side, which is again either in divergence or in non-divergence form.
We compare these elliptic models with the sedimentation problem~\eqref{eq:StokesL}--\eqref{eq:StokesL-BC3}, which is conveniently rewritten as follows, cf.~Remark~\ref{rem:proj}(a),
\begin{equation*}
\phi_L=\tfrac1{1-\lambda_L}\pi_L\phi_L^\circ,\qquad-\triangle\phi_L^\circ+\nabla\Pi_L^\circ=(\mathds1_{\Ic_L}-\lambda_L)e,\quad\Div\phi_L^\circ=0,\quad\text{in $Q_L$}.
\end{equation*}
The linear random input $\mathds1_{\Ic_L}$ in the equation for $\phi_L^\circ$ is in non-divergence form, and the nonlinearity arises from the projection $\pi_L$. Although the latter has a very different structure from elliptic solution operators, our results in the present work indicate that it behaves quite similarly and displays exactly the same nonlocality as the divergence-form elliptic solution operator $(-\Div\Aa_L\nabla)^{-1}$. In this respect, the sedimentation problem appears comparable to the hybrid model~\eqref{eq:compD}.

\medskip\noindent
We now compare the critical dimensions for the different elliptic models. While a linearized analysis would be misleading when unraveling the role of hyperuniformity, we appeal to sensitivity calculus: we analyze how correctors are modified upon infinitesimal local changes $\delta \Aa_L$ of the random coefficients $\Aa_L$ and we consider both the linear and nonlinear responses.
Formally differentiating~\eqref{eq:compB}, \eqref{eq:compC}, and~\eqref{eq:compD} with respect to~$\Aa_L$ in the direction~$\delta\Aa_L$ yields, respectively,
\begin{eqnarray*}
-\Div(\Aa_L\nabla \delta \psi_{1})&=&\Div(\delta\Aa_L e)+\Div(\delta\Aa_L \nabla \psi_1),
\\
 -\Aa_L:\nabla^2\delta \psi_{2}&=&\delta \Aa_L :E+\delta \Aa_L :\nabla^2\psi_{2},
\\
-\Div(\Aa_L\nabla \delta \psi_{3})&=&\delta \Aa_L:E+\Div(\delta\Aa_L \nabla \psi_3),
\end{eqnarray*}
where in each line the first right-hand side term is the linear response and the second one is the nonlinear response.
We denote by $G_1$ and $G_2$ the Green's functions associated with $-\Div(\Aa_L\nabla)$ and $-\Aa_L:\nabla^2$, respectively, and we recall that they behave on large scales like the Green's function for the Laplacian (up to second mixed derivative for~$G_1$, cf.~\cite{Delmotte-Deuschel-05,MaO,AKM-book,GNO-reg}, and up to first derivative for~$G_2$, cf.~\cite{AL-17}).
In order to assess the locality of the above contributions,
we appeal to Green's representation formula,
\begin{eqnarray}\label{eq:compBG}
\delta \psi_{1}(x)&=&-\int_{Q_L} \nabla_2 G_{1}(x,\cdot) \cdot \delta\Aa_L e-\int_{Q_L} \nabla_2G_{1}(x,\cdot) \cdot  \delta\Aa_L \nabla \psi_1,
\\
 \delta \psi_{2}(x)&=&\int_{Q_L} G_{2} (x,\cdot)\,\delta \Aa_L :E+\int_{Q_L} G_{2} (x,\cdot)\,\delta \Aa_L:\nabla^2\psi_{2},\label{eq:compCG}
\\
\label{eq:compDG}
 \delta \psi_{3}(x)&=&\int_{Q_L} G_{1} (x,\cdot)\,\delta \Aa_L:E-\int_{Q_L} \nabla_2G_{1}(x,\cdot) \cdot  \delta\Aa_L \nabla \psi_3.
\end{eqnarray}
Locality is measured in terms of
the power decay of $\delta\psi_1(x),\delta\psi_2(x),\delta\psi_3(x)$
when the perturbation $\delta\Aa_L$ of the coefficient field is localized in a ball $B(y)$ at a far-away point $y$.
In this informal discussion, we focus on a self-consistency analysis and assume that $\nabla \psi_1$, $\nabla^2 \psi_2$, and $\nabla\psi_3$ are already known to be well-defined stationary objects with bounded moments, while rigorous analysis would require a suitable buckling argument, cf.~Section~\ref{sec:fluc-str}.
First note that the divergence-form structure yields an additional gradient on the Green's function, hence a better locality. Decay is indeed $|x-y|^{1-d}$ for~\eqref{eq:compB} and only $|x-y|^{2-d}$ for~\eqref{eq:compC}, which explains the shift in critical dimensions: combined with suitable functional inequalities, in the spirit of Malliavin calculus, this formally entails that $\psi_1$ has bounded moments in dimension $d>2$, while  $\psi_2$ only has bounded moments in dimension $d>4$.
Next, note that both in~\eqref{eq:compBG} and in~\eqref{eq:compCG} the linear and nonlinear responses display the same locality (Green's functions have the same number of derivatives),
while this is not the case in the hybrid model: the nonlinear response in~\eqref{eq:compDG} has better locality and the scaling is thus dominated by the linear part in non-divergence form. The same property is shown to hold for the original sedimentation problem, cf.~Proposition~\ref{prop:CLTscaling}(i), and this explains in particular why the critical dimension is in general the same as for~\eqref{eq:compC}.

\medskip\noindent
Finally, we investigate the role of hyperuniformity.
As this statistical property consists of the suppression of density fluctuations of $\Aa_L$, this is naturally expressed by restricting to perturbations $\delta\Aa_L$ having vanishing average.
In the spirit of~\eqref{eq:withhyp}, this restriction leads to the gain of a derivative in the following form, for all $F \in C^\infty_\per(Q_L)^{d\times d}$,
$$
\Big| \int_{Q_L} F : \delta\Aa_L\Big|\,=\,\Big|\int_{B(y)} \Big(F - \fint_{B(y)} F\Big):\delta \Aa_L \Big|\, \lesssim \, \|\delta\Aa_L\|_{\Ld^\infty}\int_{B(y)} |\nabla F|.
$$
In the linear terms in~\eqref{eq:compBG}--\eqref{eq:compDG}, this yields an additional derivative on the Green's function, but no such gain appears in the nonlinear terms due to the presence of the stationary factors $\nabla \psi_1$, $\nabla^2 \psi_2$, and $\nabla\psi_3$.
Alternatively, this is understood as follows: fluctuations of elliptic solution operators are known to depend not only on density fluctuations of~$\Aa_L$ (which would be suppressed by hyperuniformity), but also on its geometry (which is unrelated to hyperuniformity), e.g.~\cite[Section~7.3]{JKO94}.
In the models~\eqref{eq:compB} and~\eqref{eq:compC}, the nonlinear responses are therefore limitant and no benefit is expected from hyperuniformity --- except in the particular case of dimension $d=1$ for~\eqref{eq:compB} and of isotropic coefficients $\Aa(x)=\alpha(x) \Id$ for~\eqref{eq:compC}, in which case the nonlinearity with respect to randomness is drastically simplified.
In contrast, in the hybrid model~\eqref{eq:compD}, as the nonlinear response has already better locality, hyperuniformity leads to a nontrivial improvement; this explains why under hyperuniformity the critical dimension for the sedimentation problem as for~\eqref{eq:compD} becomes the same as for~\eqref{eq:compB}.

%%%%%%%%%%%%%%%%%%
%%%%%%%%%%%%%%%%%%

\section{Mean settling speed}\label{sec:sedim}
This section is devoted to the analysis of the mean settling speed, cf.~Theorem~\ref{th:main1}.
First recall that the flow $\phi_L$ in the periodized tank $Q_L$ is defined via~\eqref{eq:StokesL}--\eqref{eq:StokesL-BC3}, that is, for all $\w\in\Omega$, the velocity field $\phi_L^\w\in H^1_\per(Q_L)^d$ and pressure field $\Pi_L^\w\in\Ld^2_\per(Q_L)$ are the unique periodic solutions of
\begin{equation}\label{eq:StokesL+}
\left\{\begin{array}{ll}
-\triangle \phi_{L}^\w+\nabla \Pi_{L}^\w=-\alpha_L^\w e,&\text{in $Q_L\setminus\Ic_L^\w$},\\
\Div \phi_{L}^\w=0,&\text{in $Q_L\setminus\Ic_L^\w$},\\
\D(\phi_{L}^\w)=0,&\text{in $\Ic_L^\w$},\\
e|I_{n,L}^\w|+\int_{\partial I_{n,L}^\w}\sigma(\phi_{L}^\w,\Pi_{L}^\w)\nu=0,&\forall n,\\
\int_{\partial I_{n,L}^\w}\Theta\nu\cdot\sigma(\phi_{L}^\w,\Pi_{L}^\w)\nu=0,&\forall n,\,\forall\Theta\in\Md^\Skew,
\end{array}\right.
\end{equation}
with vanishing average $\int_{Q_L}\phi_{e,L}^\w=0$ and $\int_{Q_L\setminus\Ic_L^\w}\Pi_L^\w=0$, where solvability imposes that the constant backflow $-\alpha_L^\w e$ be given by
\[\alpha_L^\w=\frac{\lambda_L^\w}{1-\lambda_L^\w},\qquad\lambda_L^\w=L^{-d}|\Ic_L^\w|.\]
Note that the hardcore assumption in~\ref{Hd} ensures $\alpha_L^\w\lesssim1$.
We may now proceed to the proof of Theorem~\ref{th:main1}.

\begin{proof}[Proof of Theorem~\ref{th:main1}]
We split the proof into three steps.
After some general preparation in the first step, the second step is devoted to the analysis of the periodized problem~\eqref{eq:StokesL+} and the proof of the main estimates.
In the last step, borrowing arguments from~\cite{DG-19}, we argue that the infinite-volume limit is well-defined and that the corresponding Stokes problem~\eqref{eq:cor-sed} is well-posed.

\medskip
\step1 Reformulation of the equation: we prove that equation~\eqref{eq:StokesL+} for $\phi_{L}^\w$ yields in the weak sense on the whole periodic cube $Q_L$,
\begin{equation}\label{eq:reform-sedim}
-\triangle \phi_{L}^\w+\nabla(\Pi_{L}^\w\mathds1_{Q_L\setminus\Ic_L^\w})=-\alpha_L^\w e\mathds1_{Q_L\setminus\Ic_L^\w}-\sum_n\delta_{\partial I_{n,L}^\w}\sigma(\phi_{L}^\w,\Pi_{L}^\w)\nu,
\end{equation}
where $\delta_{\partial I_{n,L}^\w}$ stands for the Dirac measure on the boundary of $I_{n,L}^\w$.

\medskip
\noindent
Given $\zeta\in C^\infty_\per(Q_L)^d$, testing equation~\eqref{eq:StokesL+} with $\zeta$ and integrating by parts on $Q_L\setminus\Ic_L^\w$, we find
\begin{multline}\label{eq:reform-prepre}
\int_{Q_L\setminus\Ic_L^\w}\nabla\zeta:\nabla \phi_L^\w - \int_{Q_L\setminus\Ic_L^\w}(\Div\zeta)\,\Pi_L^\w\\
\,=\,-\alpha_L^\w e\cdot\int_{Q_L\setminus\Ic_L^\w}\phi_L^\w-\sum_{n}\int_{\partial I_{n,L}^\w}(\zeta\otimes\nu):(\nabla\phi_L^\w-\Pi_L^\w\Id).
\end{multline}
The claim~\eqref{eq:reform-sedim} would follow provided we prove that
\begin{equation}\label{ag+1}
\int_{\Ic_L^\w}\nabla\zeta:\nabla \phi_L^\w \,=\,-\sum_{n}\int_{\partial I_{n,L}^\w}(\nu \otimes \zeta):\nabla\phi_L^\w.
\end{equation}
Indeed, adding~\eqref{ag+1} to~\eqref{eq:reform-prepre} yields the claim~\eqref{eq:reform-sedim} in view of
\[\int_{\partial I_{n,L}^\w}(\nu \otimes \zeta+\zeta\otimes\nu):\nabla\phi_L^\w=\int_{\partial I_{n,L}^\w}\zeta\otimes\nu:2\D(\phi_L^\w).\]
We turn to the proof of~\eqref{ag+1}.
Using that $\phi_L^\w$ is affine in $I_{n,L}^\w$, we obtain for all $n$,
\[\int_{\partial I_{n,L}^\w}(\nu\otimes\zeta):\nabla\phi_L^\w=\int_{\partial I_{n,L}^\w}\zeta_i\nu\cdot\nabla_i\phi_L^\w=\int_{I_{n,L}^\w}\Div(\zeta_i\nabla_i\phi_L^\w)=\int_{I_{n,L}^\w}\nabla\zeta_i\cdot\nabla_i\phi_L^\w.\]
Since $\D(\phi_L^\w)=0$ on $I_{n,L}^\w$, we can write $\phi_L^\w=V_{n,L}^\w+\Theta_{n,L}^\w(x-x_{n,L}^\w)$ on $I_{n,L}^\w$ for some $V_{n,L}^\w\in\R^d$ and $\Theta_{n,L}^\w\in\Md^\Skew$, so that the above becomes
\[\int_{\partial I_{n,L}^\w}(\nu\otimes\zeta):\nabla\phi_L^\w=\int_{I_{n,L}^\w}\nabla\zeta_i\cdot\nabla_i(\Theta_{n,L}^\w x)=\int_{\partial I_{n,L}^\w}\nu\cdot(\zeta\cdot\nabla)(\Theta_{n,L}^\w x)=\int_{\partial I_{n,L}^\w}\nu\cdot\Theta_{n,L}^\w \zeta.\]
Likewise, we find
\[\int_{I_{n,L}^\w}\nabla\zeta:\nabla\phi_L^\w=\int_{I_{n,L}^\w}\nabla\zeta:\nabla(\Theta_{n,L}^\w x)=\int_{\partial I_{n,L}^\w}\zeta\cdot(\nu\cdot\nabla)(\Theta_{n,L}^\w x)=\int_{\partial I_{n,L}^\w}\zeta\cdot\Theta_{n,L}^\w\nu.\]
By skew-symmetry of $\Theta_{n,L}^\w$, this yields~\eqref{ag+1}, hence~\eqref{eq:reform-sedim}.

\medskip
\step2 Bounds on periodized problem \eqref{eq:StokesL+}: we establish the identity
\begin{equation}\label{eq:def-V}
\frac1{\sharp\Pc_L^\w}\sum_n\tfrac{e}{|e|}\cdot V_{n,L}^\w=\frac1{\alpha_L^\w|e|}\fint_{Q_L}|\nabla\phi_{L}^\w|^2,
\end{equation}
and show that under the mixing assumption~\ref{Mix} there holds
\begin{equation}\label{eq:bound0-mix}
\|\nabla\phi_L\|_{\Ld^2(\Omega)}\lesssim \rho_L|e|\times\left\{\begin{array}{lll}
1&:&d>2;\\
(\log L)^\frac12&:&d=2;\\
L^\frac12&:&d=1;\\
\end{array}\right.
\end{equation}
whereas under the mixing and hyperuniformity assumption~\ref{Hyp} this is improved  in all dimensions to
\begin{equation}\label{eq:bound0-hyp}
\|\nabla\phi_L\|_{\Ld^2(\Omega)}\lesssim \rho_L|e|.
\end{equation}

\medskip\noindent
Testing the reformulation~\eqref{eq:reform-sedim} of the equation for $\phi_L^\w$ with $\phi_L^\w$ itself and using the divergence-free condition for $\phi_L^\w$, we obtain
\[\int_{Q_L}|\nabla\phi_{L}^\w|^2=-\alpha_L^\w e\cdot\int_{Q_L\setminus\Ic_L^\w} \phi_{L}^\w-\sum_n\int_{\partial I_{n,L}^\w}\phi_L^\w\cdot\sigma(\phi_L^\w,\Pi_L^\w)\nu.\]
Since $\D(\phi_L^\w)=0$ on $I_{n,L}^\w$, we can write $\phi_L^\w=V_{n,L}^\w+\Theta_{n,L}^\w(x-x_{n,L}^\w)$ on $I_{n,L}^\w$ for some $V_{n,L}^\w\in\R^d$ and $\Theta_{n,L}^\w\in\Md^\Skew$, so that the boundary conditions for $\phi_L^\w$ lead to
\begin{multline*}
\int_{\partial I_{n,L}^\w}\phi_L^\w\cdot\sigma(\phi_L^\w,\Pi_L^\w)\nu\,=\,V_{n,L}^\w\cdot\int_{\partial I_{n,L}^\w}\sigma(\phi_L^\w,\Pi_L^\w)\nu+\int_{\partial I_{n,L}^\w}\Theta_{n,L}^\w\nu\cdot\sigma(\phi_L^\w,\Pi_L^\w)\nu\\
\,=\,-e\cdot |I_{n,L}^\w|V_{n,L}^\w\,=\,-e\cdot\int_{I_{n,L}^\w}\phi_{L}^\w,
\end{multline*}
and the above becomes
\[\int_{Q_L}|\nabla\phi_{L}^\w|^2=-\alpha_L^\w e\cdot\int_{Q_L\setminus\Ic_L^\w} \phi_{L}^\w+\sum_ne\cdot\int_{I_{n,L}^\w}\phi_{L}^\w.\]
Since $\int_{Q_L}\phi_{L}^\w=0$, this energy identity takes the form
\begin{equation}\label{eq:ident-energy-surpr}
\int_{Q_L}|\nabla\phi_{L}^\w|^2=(1+\alpha_L^\w)\sum_ne\cdot\int_{I_{n,L}^\w}\phi_{L}^\w.
\end{equation}
In terms of particle velocities $V_{n,L}^\w=\fint_{I_{n,L}^\w}\phi_{L}^\w$, cf.~\eqref{eq:def-V-phi}, noting that $(1+\alpha_L^\w)|B|\sharp\Pc_L^\w=L^d\alpha_L^\w$, this turns into the claim~\eqref{eq:def-V}.

\medskip\noindent
Next, recalling $\int_{Q_L}\phi_L^\w=0$ and integrating by parts, the energy identity~\eqref{eq:ident-energy-surpr} is alternatively written as
\begin{eqnarray}
\int_{Q_L}|\nabla\phi_{L}^\w|^2&=& (1+\alpha_L^\w) \int_{Q_L}\phi_{L}^\w \cdot \Big(\sum_n e(\mathds1_{I_{n,L}^\w}-L^{-d}|B|)\Big)\nonumber
\\
&=&(1+\alpha_L^\w)\int_{Q_L}\nabla\phi_{L}^\w:\Big(e\otimes\sum_n\nabla(-\triangle)^{-1}(\mathds1_{I_{n,L}^\w}-L^{-d}|B|)\Big).\label{eq:weak-strong}
\end{eqnarray}
Hence, by Cauchy-Schwarz' inequality,
\begin{equation*}
\int_{Q_L}|\nabla\phi_{L}^\w|^2\le(1+\alpha_L^\w)^2|e|^2\int_{Q_L}\Big|\sum_n\nabla\triangle^{-1}(\mathds1_{I_{n,L}^\w}-L^{-d}|B|)\Big|^2,
\end{equation*}
so that we find, by the hardcore assumption in the form $\alpha_L^\w\lesssim1$ and by stationarity of $\Pc_L$,
\begin{equation*}
\|\nabla\phi_{L}\|_{\Ld^2(\Omega)}^2\lesssim|e|^2\,\E\bigg[\Big|\sum_n\nabla\triangle^{-1}(\mathds1_{I_{n,L}}-L^{-d}|B|)\Big|^2\bigg].
\end{equation*}
Denoting by $G_L$ the Green's function for the Laplacian on the periodic cell $Q_L$, that is, the unique distributional solution of
$-\triangle G_L=\delta-L^{-d}$ on $Q_L$, and setting $F_L(x):=\int_{B(x)}\nabla G_L$, we may rewrite the above as
\begin{equation*}
\|\nabla\phi_{L}\|_{\Ld^2(\Omega)}^2\,\lesssim\, |e|^2\,\E\bigg[{\Big|\sum_nF_L(x_{n,L})\Big|^2}\bigg]
\,=\,|e|^2\,\bigg|\!\Var\bigg[\sum_nF_L(x_{n,L})\bigg]\bigg|,
\end{equation*}
where the last equality follows from noting that $\expec{\,\sum_nF_L(x_{n,L})}=\rho_L\int_{Q_L}F_L=0$.
Under the mixing assumption~\ref{Mix}, in terms of the pair correlation function $g_{2,L}$, we then deduce
\begin{eqnarray*}
\|\nabla\phi_{L}\|_{\Ld^2(\Omega)}^2
&\lesssim&\rho_L^2|e|^2\bigg|\iint_{Q_L\times Q_L}F_L(x)F_L(y)\,g_{2,L}(x-y)\,dxdy\bigg|\\
&\le&\rho_L^2|e|^2\Big(\int_{Q_L}|F_L|^2\Big)\Big(\int_{Q_L}|g_{2,L}|\Big)\,\lesssim\,\rho_L^2|e|^2\int_{Q_L}|F_L|^2,
\end{eqnarray*}
and the claim~\eqref{eq:bound0-mix} follows from the standard decay of the Green's function $G_L$
in form of $|F_L(x)|\lesssim (1+|x|_L)^{1-d}$.
Under the hyperuniformity assumption~\ref{Hyp}, we rather appeal to the variance estimate~\eqref{eq:withhyp}, cf.~Lemma~\ref{lem:var-lin}, to the effect of
\begin{equation*}
\|\nabla\phi_{L}\|_{\Ld^2(\Omega)}^2
\,\lesssim\,\rho_L^2|e|^2\int_{Q_L}|\nabla F_L|^2,
\end{equation*}
and the claim~\eqref{eq:bound0-hyp} now follows from the standard decay of the Green's function $G_L$ in form of $|\nabla F_L(x)|=|\int_{B(x)} \nabla^2 G_L|\,\lesssim\, (1+|x|_L)^{-d}$.

\medskip
\step3 Infinite-volume limit:
under~\ref{Mix} for $d>2$ or under~\ref{Hyp} for any $d\ge 1$,
we argue that $\nabla\phi_L$ converges strongly in $\Ld^2(\Omega)$ as $L\uparrow\infty$ to the unique gradient solution $\nabla\phi$ of the corresponding infinite-volume problem~\eqref{eq:cor-sed}.

\nopagebreak\medskip\noindent
Uniqueness for~\eqref{eq:cor-sed} is already contained in~\cite{DG-19} since the difference of two solutions of~\eqref{eq:cor-sed} is a solution of~\eqref{eq:cor-sed} without gravity. It only remains to establish the strong convergence result. For that purpose, in terms of
\[\gamma_L^\w:=\sum_n\nabla(-\triangle)^{-1}(\mathds1_{I_{n,L}^\w}-L^{-d}|B|),\]
we appeal to identity~\eqref{eq:weak-strong} in the form
\[\expec{|\nabla\phi_{L}|^2}\,=\,\E\Big[(1+\alpha_L)\fint_{Q_L}\nabla\phi_{L} : (e\otimes \gamma_L)\Big],\]
or equivalently, by stationarity of $\nabla\phi_{L} : (e\otimes \gamma_L)$ and by invariance of $\alpha_L$ with respect to shifts on the periodic cell (shifting $\Pc_L$ does indeed not change the number of points),
\begin{equation}\label{eq:idesw}
\expec{|\nabla\phi_{L}|^2}\,=\,\E\big[(1+\alpha_L)\nabla\phi_{L} : (e\otimes \gamma_L)\big].
\end{equation}
The stabilization condition for $\Pc_L$ in~\ref{Hd} ensures that $\gamma_L$ converges strongly in $\Ld^2(\Omega)$ to $\gamma := \sum_n \nabla(-\triangle)^{-1}\mathds1_{I_{n}}$,
which is indeed well-defined under~\ref{Mix} for $d>2$ or under~\ref{Hyp} for any $d\ge 1$. Since the bounded random variable $\alpha_L$ converges almost surely to $\alpha$,
we conclude that $(1+\alpha_L)  (e\otimes \gamma_L)$ converges strongly to $(1+\alpha)(e\otimes \gamma)$ in $\Ld^2(\Omega)$. Therefore, identity~\eqref{eq:idesw} entails that the strong convergence $\nabla \phi_L \to \nabla \phi$ in $\Ld^2(\Omega)$ follows from the corresponding weak convergence.

\medskip\noindent
Using the uniform bound~\eqref{eq:bound0-mix} or~\eqref{eq:bound0-hyp} of Step~1, weak compactness in $\Ld^2(\Omega;\Ld^2_\loc(\R^d))$ ensures that $\nabla\phi_L$ converges weakly to some stationary random field $\Phi \in \Ld^2(\Omega;\Ld^2_\loc(\R^d)^{d\times d})$ (along a subsequence, not relabelled).
As a weak limit of gradients, $\Phi$ is necessarily gradient-like, hence we may write
$\Phi^\w(x)=\nabla \phi^\omega (x)$ for some $\phi\in\Ld^2(\Omega;H^1_\loc(\R^d)^d)$.
Combining this with the stabilization condition in~\ref{Hd} in form of $\mathds1_{\Ic_L^\w}\to \mathds1_{\Ic^\w}$ in $\Ld^2_\loc(\R^d)$ for almost all $\w$,
we may then pass to the limit in the weak formulation of equation~\eqref{eq:StokesL+}. More precisely, we first easily deduce for almost all $\w$ that $\D(\phi^\w)=0$ in $\Ic^\w$ and that $\Div\phi^\w=0$. Next, a similar standard argument as in~\cite[Step~3 of proof of Proposition~2.1]{DG-19} allows to deduce that $\phi$ is a solution of~\eqref{eq:cor-sed} in the following weak sense: for almost all $\w$, for all test functions $\psi\in C^\infty_c(\R^d)^d$ with $\D(\psi)=0$ on $\Ic^\w$ and $\Div\psi=0$, there holds
\[\int_{\R^d}\nabla\psi:\nabla\phi^\w=-\alpha e\cdot\int_{\R^d\setminus\Ic^\w}\psi+e\cdot\int_{\Ic^\w}\psi.\]
Finally, arguing as in~\cite[Step~4 of proof of Proposition~2.1]{DG-19}, a stationary pressure $\Pi^\omega$ can be reconstructed with vanishing expectation and finite second moments, while the regularity theory for the steady Stokes equation ensures that~$(\phi^\w,\Pi^\w)$ is in fact a classical solution of~\eqref{eq:cor-sed} and that boundary conditions are satisfied in a pointwise sense.
\end{proof}

%%%%%%%%%%%%%%%%%%%%%%%%%%%
%%%%%%%%%%%%%%%%%%%%%%%%%%%

\medskip
\section{Velocity fluctuations}\label{sec:fluc}
This section is devoted to the proof of Theorem~\ref{th:main2}, that is, the estimation of fluctuations of individual particle velocities, which requires a fine analysis of stochastic cancellations.
The proof is particularly demanding and strongly relies on a novel annealed regularity theory for the steady Stokes equation with a random suspension, which is briefly described in Section~\ref{sec:reg0} and mainly postponed to a forthcoming companion contribution~\cite{DG-20+}.
Interestingly, the hyperuniform setting is easier to treat as it only requires a perturbative regularity result.
We also strongly rely on local regularity statements and pressure estimates borrowed from our previous work~\cite{DG-19} on colloidal (non-sedimenting) suspensions.
By scaling, we may henceforth assume $|e|=1$.

\subsection{Structure of the proof}\label{sec:fluc-str}
We start with the following key estimates on the optimal decay of large-scale averages of the gradient field $\nabla\phi_{L}$.
Due to the nonlinearity with respect to randomness, local norms of $\nabla\phi_L$ also appear in the right-hand side; this will be subsequently absorbed by a buckling argument. Yet, the correct fluctuation scaling is already manifest:
in particular, the first term in~\eqref{eq:mix-res-scal} below displays the CLT scaling multiplied by a length scale, cf.~\eqref{eq:fluc-scaling-1}, and is the reason why the critical dimension in Theorem~\ref{th:main2}(i) is $d=4$ instead of $d=2$.
Importantly, the nonlinear contribution of $\nabla\phi_L$ in the right-hand side is multiplied by the CLT scaling without loss, which is key to our buckling argument to prove Theorem~\ref{th:main2}(i): the worse nonlocality only appears in the linear part, while the nonlinear part always behaves as in homogenization for divergence-form linear elliptic equations, cf.~Section~\ref{sec:comp}.
In the hyperuniform setting~(ii), the suppression of density fluctuations exactly allows to avoid the worse scaling of the linear part, cf.~Section~\ref{sec:comp}, and we recover the CLT scaling~\eqref{eq:fluc-scaling-2}.

\begin{prop}[Fluctuation scaling]\label{prop:CLTscaling}
Let the random point processes $(\Pc_L)_{L\ge1}$ satisfy the general assumptions~\ref{Hd} for some $\delta>0$.
\begin{enumerate}[(i)]
\item Under the improved mixing assumption~\ref{Mix+},
in dimension $d>2$, there holds for all $g\in C^\infty_\per(Q_L)^{d\times d}$, $1\le R\le L$, $q\ge1$, and $1\ll p<\infty$,
\begin{multline}\label{eq:mix-res-scal}
\qquad\Big\|\int_{Q_L}g:\nabla \phi_{L}\Big\|_{\Ld^{2p}(\Omega)}^{2}
\,\lesssim_p\,\|g\|_{\Ld^{\frac{2d}{d+2}}(Q_L)}^2 \wedge\, \|\langle\cdot\rangle g\|_{\Ld^{2}(Q_L)}^2\\
+\|\avoir g\|_{\Ld^2(Q_L)}^2\,\bigg\|\Big(1+\int_{B_{R}}[\nabla\phi_{L}]_2^{2q}\Big)^\frac1q\bigg\|_{\Ld^p(\Omega)}.
\end{multline}
\item Under the improved mixing and hyperuniformity assumption~\ref{Hyp+}, in any dimension $d\ge1$, there holds for all $g\in C^\infty_\per(Q_L)^{d\times d}$, $1\le R\le L$, $q\ge1$, and~$1\ll p<\infty$,
\begin{equation}\label{eq:hyp-res-scal}
\qquad\Big\|\int_{Q_L}g:\nabla \phi_{L}\Big\|_{\Ld^{2p}(\Omega)}^{2}
\,\lesssim_p\,\|\avoir g\|_{\Ld^2(Q_L)}^2\,\bigg\|\Big(1+\int_{B_{R}}[\nabla\phi_{L}]_2^{2q}\Big)^\frac1q\bigg\|_{\Ld^p(\Omega)}.\qedhere
\end{equation}
\end{enumerate}
\end{prop}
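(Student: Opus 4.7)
The plan is to apply the multiscale variance inequalities~\ref{Mix+} and~\ref{Hyp+}, in their $\Ld^{2p}$ versions obtained by standard iteration, to the random variable $Y(\Pc_L):=\int_{Q_L}g:\nabla\phi_L$. The crux of the proof is then the sensitivity calculus controlling $\partial^{\operatorname{osc}}_{\Pc_L,B_\ell(x)}Y$ in case~(i) and $\partial^{\operatorname{hyp}}_{\Pc_L,B_\ell(x)}Y$ in case~(ii). Given a perturbed configuration $\Pc_L'$ coinciding with $\Pc_L$ outside $B_\ell(x_0)$, with associated Stokes solution $(\phi_L',\Pi_L')$, the difference $Y(\Pc_L')-Y(\Pc_L)=\int g:\nabla(\phi_L'-\phi_L)$ should first be given a representation localized to $B_{\ell+C}(x_0)$. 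To this end I would introduce an auxiliary dual Stokes field $u$ solving $-\triangle u+\nabla P=\Div g$ with divergence-free and rigidity constraints matched to both $\Pc_L$ and $\Pc_L'$, test it against the equations satisfied by $\phi_L$ and~$\phi_L'$ in the reformulated form~\eqref{eq:reform-sedim}, and subtract.

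Next, I would split the resulting integral into a \emph{linear response} (collecting the change of the bulk source $-\alpha_L e\mathds1_{Q_L\setminus\Ic_L}$, i.e.\ the contribution of $\mathds1_{\Ic_L'}-\mathds1_{\Ic_L}$) and a \emph{nonlinear response} (the boundary contributions arising from the mismatched rigidity constraints, which involve traces of $\nabla\phi_L$ on $B_{\ell+C}(x_0)$). The linear part is represented by a convolution of the local indicator difference against the Stokes Green's function: using the decay $|\nabla G|(x)\lesssim\langle x\rangle^{1-d}$, squaring, integrating $x_0$ against the multiscale weight $\langle\ell\rangle^{-d}\pi(\ell)d\ell$, and applying a Hardy--Littlewood--Sobolev-type embedding yields the $\|g\|_{\Ld^{2d/(d+2)}(Q_L)}^2$ bound (or, alternatively, the weighted $\|\langle\cdot\rangle g\|_{\Ld^2}^2$ bound for compactly-supported test fields). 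In case~(ii), the move-point perturbation preserves $\sharp\Pc_L|_{B_\ell(x_0)}$, so the indicator perturbation has vanishing mean on $B_\ell(x_0)$; this allows to subtract $\fint_{B_\ell(x_0)}g$ and gain a half-derivative on the Green's function via a dyadic multiscale argument, delivering the $\|\avoir g\|_{\Ld^2(Q_L)}^2$ bound. The residual $L^{-1}\partial^{\operatorname{osc}}$ part of the hyperuniform derivative produces a lower-order contribution thanks to the prefactor $L^{-2}$.

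For the nonlinear response, I would invoke the annealed $\Ld^p$ regularity results of Section~\ref{sec:reg0} (the main input from~\cite{DG-20+}): this replaces the potentially singular pointwise solution operator by one acting boundedly between stochastic $\Ld^{2p}(\Omega;\Ld^q)$ spaces, which lets me absorb a local quadratic average $[\nabla\phi_L]_2$ of the corrector multiplied by the divergence-form Stokes kernel (one derivative better than the non-divergence linear part). The resulting bound takes the form $\|\avoir g\|_{\Ld^2(Q_L)}^2\times\|[\nabla\phi_L]_2\|_{\Ld^{2q}(B_R)}$, which after a standard use of stationarity and Jensen's inequality rearranges into the nonlinear factor displayed in~\eqref{eq:mix-res-scal}--\eqref{eq:hyp-res-scal}. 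Note that this nonlinear contribution carries the same CLT-type $\|\avoir g\|_{\Ld^2}^2$ weight in both~(i) and~(ii), reflecting the fact that hydrodynamic interactions behave like a divergence-form operator even though the linear input is in non-divergence form, as heuristically explained in Section~\ref{sec:comp}.

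The main obstacle is the sensitivity calculus itself: building the dual test field $u$ so as to cleanly reconcile the rigidity constraints of $\Pc_L$ and $\Pc_L'$ simultaneously, and performing the decomposition into linear and nonlinear responses in a way that the nonlinear part genuinely inherits the better nonlocality of the divergence-form solution operator. A secondary technical point is the passage from variance to $\Ld^{2p}$, which is routine given~\ref{Mix+}/\ref{Hyp+} but requires care when combined with the annealed regularity. The pressure contributions coming from the test-field construction are handled by local pressure estimates imported from~\cite{DG-19}.
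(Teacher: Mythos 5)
Your overall skeleton --- apply the $\Ld^{2p}$ version of the multiscale inequalities to $Y=\int_{Q_L}g:\nabla\phi_L$, estimate the oscillation/move-point derivative by a sensitivity calculus built on a dual Stokes field with source $\Div g$, split into a less local ``linear'' contribution and a divergence-form ``nonlinear'' contribution, and close with annealed regularity and the pressure estimates of Lemma~\ref{lem:pres} --- is indeed the strategy of the paper. But there is a genuine gap in how you propose to estimate the linear response. You want to represent it as a convolution of the local indicator difference against the \emph{constant-coefficient} Stokeslet, use $|\nabla G|\lesssim\langle x\rangle^{1-d}$, and conclude by Hardy--Littlewood--Sobolev. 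This is only legitimate for the linearized model of Section~\ref{sec:heur}: in the actual problem the local source change is propagated by the solution operator \emph{with all the rigid particles}, equivalently it is paired against the heterogeneous dual field $v_L$ of~\eqref{eq:test-v}, and no pointwise Green's function decay is available for that operator without a diluteness assumption (cf.~Remark~\ref{rem:reg-hofer}). In the paper the linear term is $\int_{B_{\ell+1}(x_0)}|v_L|$ (plus a term $L^{-d}\,|e\cdot\int_{\Ic_L}v_L|$ coming from $\alpha_L\neq\alpha_L'$, which requires its own sensitivity estimate, Step~2), and controlling it is exactly where the \emph{non-perturbative} annealed $\Ld^p$ estimate of Theorem~\ref{CZ} is needed, via Poincar\'e--Sobolev and $\|\nabla v_L\|_{\Ld^{2d/(d+2)}(Q_L;\Ld^{2p}(\Omega))}\lesssim\|g\|_{\Ld^{2d/(d+2)}(Q_L)}$; the weighted alternative $\|\langle\cdot\rangle g\|_{\Ld^2}$ comes from a refined Poincar\'e inequality plus the weighted version of Theorem~\ref{CZ}, not from compact support. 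You have the division of labour reversed: the nonlinear term only needs the perturbative Theorem~\ref{th:annealed} (through a duality argument with an auxiliary random variable $X$, which is what avoids any loss of stochastic integrability), while the linear term is the one that consumes the full annealed Calder\'on--Zygmund theory under~\ref{Mix+}. Likewise, in case~(ii) the number-conservation gain is implemented as $\langle\ell\rangle\int_{B_{\ell+1}(x_0)}|\nabla v_L|$, i.e.\ a full gradient on the \emph{dual field}, again closed by annealed regularity; it is not a half-derivative gain on a Green's function, and the $\avoir$ in the norms has a different (local trace/regularity) origin.

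Two further points would need repair before the argument closes. First, a dual field with rigidity constraints ``matched to both $\Pc_L$ and $\Pc_L'$'' cannot be constructed (the particles differ in $B_\ell(x_0)$); the paper adapts $v_L$ to $\Pc_L$ only, which forces boundary terms on $\partial I_{n,L}'$ involving $\nabla\phi_L'$ and $\Pi_L'$. Since the oscillation is a supremum over all admissible perturbations, you must then prove a comparison estimate replacing local norms of $\nabla\phi_L'$ by those of $\nabla\phi_L$ (Substep~1.2 of the paper); this is where the restriction $d>2$ enters in case~(i) through Poincar\'e--Sobolev, and where the move-point constraint removes that restriction in case~(ii). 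Your proposal silently assumes the nonlinear response already involves only $\nabla\phi_L$. Second, the dual field naturally pairs with $\int_{Q_L\setminus\Ic_L}g:\nabla\phi_L$; the contribution $\int_{\Ic_L}g:\nabla\phi_L$ requires a separate treatment (the paper uses a Bogovskii construction to convert $\fint_{I_{n,L}}g$ into an admissible divergence-free forcing $h_L$ and then reuses the outside-particle sensitivity estimate). These are not mere technicalities: without them the right-hand sides of~\eqref{eq:mix-res-scal}--\eqref{eq:hyp-res-scal} cannot be expressed in terms of the unperturbed solution alone, nor does the estimate cover the full integral $\int_{Q_L}g:\nabla\phi_L$.
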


In preparation for a buckling argument, the following result allows to bound local norms of $\nabla\phi_L$ as appearing in the right-hand sides of~\eqref{eq:mix-res-scal}--\eqref{eq:hyp-res-scal} by corresponding large-scale averages.
This statement is inspired by~\cite{Otto-Tlse} in the context of homogenization for divergence-form linear elliptic equations, and constitutes a compact improved version of~\cite[Lemma~2.2]{Gloria-19}.

\begin{prop}\label{prop:interpol}
Let the random point processes $(\Pc_L)_{L\ge1}$ satisfy the general assumptions~\ref{Hd} for some $\delta>0$.
Choose $\chi\in C^\infty_c(B)$ with $\int_B\chi=1$ and set $\chi_r(x):=r^{-d}\chi(\frac xr)$.
There exists $\eta_0>0$ (only depending on $d,\delta$) such that there holds for all $1\le r\ll_\chi R\le L$, $1\le q\le1+\eta_0$, and~$p\ge1$,
\[\bigg\|\Big(\fint_{B_{R}}|\nabla \phi_{L}|^{2q}\Big)^\frac1q\bigg\|_{\Ld^p(\Omega)}\,\lesssim_\chi\, R^{2}+\Big\|\int_{Q_L}\chi_r\nabla\phi_{L}\Big\|_{\Ld^{2p}(\Omega)}^2.\qedhere\]
\end{prop}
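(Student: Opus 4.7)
The estimate is a large-scale regularity/buckling result in the spirit of~\cite{Otto-Tlse}, adapted to the steady Stokes equation with a random suspension. The strategy is to compare the raw gradient $\nabla\phi_L$ to its smoothed version $\chi_r*\nabla\phi_L$: the smoothed part is controlled in $\Ld^p(\Omega)$ by stationarity in terms of the CLT-type norm $\big\|\int_{Q_L}\chi_r\nabla\phi_L\big\|_{\Ld^{2p}(\Omega)}$, while the remainder is shown to be deterministically of size $r\le R$, due to the uniformly bounded body force $-\alpha_Le$, which accounts for the additive $R^2$.

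I would begin with Step~1: a Meyers-type reverse H\"older inequality for $\phi_L$. Using the local Caccioppoli-type estimates and pressure bounds for Stokes with suspension developed in~\cite{DG-19}, together with the hardcore condition in~\ref{Hd}, Gehring's lemma yields $\eta_0>0$ (depending only on $d$ and $\delta$) such that for every ball $B_R\subset Q_L$ and every $1\le q\le 1+\eta_0$,
\begin{equation*}
\Big(\fint_{B_R}|\nabla\phi_L|^{2q}\Big)^{1/q}\,\lesssim\,\fint_{B_{2R}}|\nabla\phi_L|^2+R^2,
\end{equation*}
the $R^2$ term absorbing the uniformly bounded body force $|\alpha_L e|\lesssim 1$. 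This reduces the problem to estimating the $\Ld^p(\Omega)$ norm of the $\Ld^2$ average of $|\nabla\phi_L|^2$ on $B_{2R}$.

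In Step~2 I decompose $\nabla\phi_L=\chi_r*\nabla\phi_L+(\nabla\phi_L-\chi_r*\nabla\phi_L)$ and treat the two pieces separately. For the smoothed part, stationarity of $\nabla\phi_L$ combined with Minkowski's inequality gives
\begin{equation*}
\Big\|\fint_{B_{2R}}|\chi_r*\nabla\phi_L|^2\Big\|_{\Ld^p(\Omega)}\,\le\,\big\|\chi_r*\nabla\phi_L(0)\big\|_{\Ld^{2p}(\Omega)}^2\,=\,\Big\|\int_{Q_L}\chi_r\,\nabla\phi_L\Big\|_{\Ld^{2p}(\Omega)}^2,
\end{equation*}
matching exactly the second term on the right-hand side of the claim. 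For the remainder, I would argue that a small-scale Stokes regularity estimate --- again exploiting that the only forcing is the bounded vector $-\alpha_L e$, together with the pressure estimates of~\cite{DG-19} and the uniform hardcore radius~$\delta$ --- yields
\begin{equation*}
\fint_{B_{2R}}|\nabla\phi_L-\chi_r*\nabla\phi_L|^2\,\lesssim_\chi\,r^2\,\le\,R^2.
\end{equation*}
Combining these two bounds with Step~1 (applied on a slightly enlarged ball to pass from the $\Ld^2$ average to the $\Ld^{2q}$ average) yields the announced estimate.

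The main obstacle will be the remainder bound in Step~2: controlling the Lipschitz defect of $\nabla\phi_L$ at scale~$r$ in the presence of genuine discontinuities of $\nabla\phi_L$ across particle boundaries. In the divergence-form elliptic analogue of~\cite{Otto-Tlse}, large-scale $C^{1,\alpha}$ regularity of the homogenized operator makes this almost automatic; here, although the solution is only piecewise smooth at the particle scale, the hardcore assumption and the local pressure/energy techniques of~\cite{DG-19} should allow the particle boundary contributions to be absorbed as finite-energy corrections and give the clean $r^2$ scaling, with a multiplicative constant that depends on $\chi$ through norms of its derivatives.
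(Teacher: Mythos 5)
Your Step~1 (Meyers/Gehring) and your treatment of the smoothed part (Jensen plus stationarity, giving $\|\fint_{B_{2R}}|\chi_r\ast\nabla\phi_L|^2\|_{\Ld^p(\Omega)}\le\|\int_{Q_L}\chi_r\nabla\phi_L\|_{\Ld^{2p}(\Omega)}^2$) are exactly as in the paper. The gap is the remainder bound
$\fint_{B_{2R}}|\nabla\phi_L-\chi_r\ast\nabla\phi_L|^2\lesssim_\chi r^2$,
which you yourself flag as the main obstacle: this deterministic estimate is not available, and no ``small-scale Stokes regularity'' can deliver it. Boundedness of the forcing $-\alpha_Le$ does not bound the size of $\nabla\phi_L$ itself: under the sole assumption \ref{Hd} the local gradient amplitude is a random variable with no deterministic (nor even, in low dimensions, moment) bound --- indeed controlling its moments is precisely the content of the proposition. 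Since $\nabla\phi_L$ oscillates at the particle scale (it jumps from the rigid value $\Theta_{n,L}$ inside a particle to the fluid values outside) with amplitude proportional to the local gradient, mollification at scale $r\ge1$ removes essentially all of this oscillation, so the high-pass part $\nabla\phi_L-\chi_r\ast\nabla\phi_L$ is of the order of the local gradient itself, not of order $r$. In other words, your claimed bound would yield an almost sure estimate $\fint_{B_{2R}}|\nabla\phi_L|^2\lesssim r^2+\fint_{B_{2R}}|\chi_r\ast\nabla\phi_L|^2$, which is a pathwise statement far stronger than the proposition and false in general; the correct statement can only be extracted at the level of moments.

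The paper's mechanism avoids this by never asking for deterministic smallness of a high-pass filtered gradient. One works at the level of $\phi_L$: the Caccioppoli inequality (with the pressure estimate of Lemma~\ref{lem:pres}, which is where the additive $R^2$ really comes from) leaves a free constant $c_R$, which is chosen as $\fint_{B_{8R}}\chi_r\ast\phi_L$. Poincar\'e then splits
$\fint_{B_{8R}}|\phi_L-c_R|^2\lesssim_\chi r^2\fint_{B_{8R}}|\nabla\phi_L|^2+R^2\fint_{B_{8R}}|\chi_r\ast\nabla\phi_L|^2$,
\emph{keeping} the local energy factor in the first term; after dividing by $R^2$ it appears with the small prefactor $r^2/R^2$ (plus a $K^{-2}$ term from the Caccioppoli argument). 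This is a buckling, not an absolute, bound: the local-energy terms are absorbed only after taking $\Ld^p(\Omega)$-norms, using stationarity of $\nabla\phi_L$ to replace $\fint_{B_{8R}}$ by $\fint_{B_R}$ in the moments and then choosing $K\gg1$ and $R\gg_{\chi,K}r$. To repair your argument you would need to replace your remainder estimate by this conditional one ($r^2$ \emph{times} the local energy on an enlarged ball) and add the stationarity-based absorption step; as written, the deterministic $r^2$ bound is a genuine missing ingredient that cannot be supplied.
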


We are now in position to prove Theorem~\ref{th:main2}.
Based on the above two propositions together with a buckling argument, we first deduce moment bounds on $\nabla\phi_L$.
Combining this again with Proposition~\ref{prop:CLTscaling}, we  deduce the optimal fluctuation scaling for large-scale averages of $\nabla\phi_L$, cf.~\eqref{eq:fluc-scaling-1}--\eqref{eq:fluc-scaling-2}.
Finally, moment bounds on the velocity field $\phi_L$ simply follow by integration.

\begin{proof}[Proof of Theorem~\ref{th:main2}]
By local regularity for the steady Stokes equation, e.g.~\cite[Section~IV]{Galdi}, we have
\begin{equation}\label{eq:loc-reg00}
\sup_{B(x)} |\nabla \phi_L^\w| \lesssim \Big(\fint_{B_2(x)} |\nabla \phi_L^\w|^2\Big)^\frac12,\qquad \sup_{B(x)} |\phi_L^\w| \,\lesssim\, \Big( \fint_{B_2(x)} |\phi_L^\w|^2+|\nabla \phi_L^\w|^2\Big)^\frac12,
\end{equation}
so that it is enough to control moments of local quadratic averages $[\nabla\phi_L]_2$ and $[\phi_L]_2$; we omit the detail.
We split the proof into three steps.

\medskip
\step1 Proof that for all $1\le R\le L$,
\[\|[\nabla\phi_{L}]_2\|_{\Ld^{2p}(\Omega)}^2\lesssim(R^d)^{1-\frac1p}\Big\|\fint_{B_{R}}|\nabla\phi_{L}|^2\Big\|_{\Ld^p(\Omega)}.\]
By the discrete $\ell^1-\ell^p$ inequality in form of the  reverse Jensen's inequality
\[\Big(\fint_{B_R(x)}[\nabla\phi_{L}]_2^{2p}\Big)^\frac1p\lesssim (R^d)^{1-\frac1p}\fint_{B_{2R}(x)}|\nabla \phi_{L}|^2,\]
the claim follows in combination with stationarity of $[\nabla \phi_L]_2$.

\medskip
\step2 Moment bounds on $\nabla\phi_L$: proof of~\eqref{eq:nablaphi-bnd} and~\eqref{eq:nablaphi-bnd+}.\\
Under~\ref{Mix+} in dimension $d>2$, combining the results of Propositions~\ref{prop:CLTscaling}(i) and~\ref{prop:interpol}, we find for all $1\le r\ll_\chi R\le L$, $1<q\le1+\eta_0$, and $1\ll p<\infty$,
\begin{eqnarray*}
\bigg\|\Big(\fint_{B_{R}}|\nabla \phi_{L}|^{2q}\Big)^\frac1q\bigg\|_{\Ld^p(\Omega)}&\lesssim_\chi& R^{2}+\Big\|\int_{Q_L}\chi_r\nabla\phi_{L}\Big\|_{\Ld^{2p}(\Omega)}^2\\
&\lesssim_{p,\chi}& R^{2}+r^{2-d}+\big(\tfrac Rr\big)^dR^{-\frac d{q'}}\bigg\|\Big(1+\fint_{B_R}[\nabla\phi_{L}]_2^{2q}\Big)^\frac1q\bigg\|_{\Ld^{p}(\Omega)},
\end{eqnarray*}
which yields after optimization in $r$, for $R\gg1$ and $q>1$,
\[\bigg\|\Big(\fint_{B_{R}}|\nabla \phi_{L}|^{2q}\Big)^\frac1q\bigg\|_{\Ld^p(\Omega)}\lesssim_p R^{2},\]
and the conclusion~\eqref{eq:nablaphi-bnd} then follows from the result of Step~1.

\medskip\noindent
Under~\ref{Hyp+} in any dimension $d\ge1$, combining the results of Propositions~\ref{prop:CLTscaling}(ii) and~\ref{prop:interpol}, we rather find for all $1\le r\ll R\le L$, $1\le q<1+\eta_0$, and $1\ll p<\infty$,
\begin{equation*}
\bigg\|\Big(\fint_{B_{R}}|\nabla \phi_{L}|^{2q}\Big)^\frac1q\bigg\|_{\Ld^p(\Omega)}
\,\lesssim_p\, R^{2}+\big(\tfrac Rr\big)^{d}R^{-\frac d{q'}}\bigg\|\Big(1+\fint_{B_R}[\nabla\phi_{L}]_2^{2q}\Big)^\frac1q\bigg\|_{\Ld^{p}(\Omega)},
\end{equation*}
and the conclusion~\eqref{eq:nablaphi-bnd+} follows in the same way.

\medskip
\step3 Moment bounds on $\phi_L$: proof of~\eqref{eq:phi-bnd} and~\eqref{eq:phi-bnd+}.\\
Poincar\'e's inequality yields
\begin{equation}\label{eq:phi-bnd0}
\bigg\|\Big[\phi_{L}-\fint_{B}\phi_{L}\Big]_2(x)\bigg\|_{\Ld^{2p}(\Omega)}\lesssim\|[\nabla\phi_{L}]_2\|_{\Ld^{2p}(\Omega)}+\Big\|\fint_{B(x)}\phi_{L}-\fint_{B}\phi_{L}\Big\|_{\Ld^{2p}(\Omega)},
\end{equation}
and in view of~\eqref{eq:nablaphi-bnd} and~\eqref{eq:nablaphi-bnd+} it remains to estimate the last right-hand side term.
For that purpose, we write
\[\fint_{B(x)}\phi_{L}-\fint_{B}\phi_{L}=\int_{Q_L}\nabla\phi_{L}\cdot\nabla h_{x,L},\]
in terms of $\nabla h_{x,L}:=\nabla h_L(\cdot -x)-\nabla h_L$, where $h_L$ denotes the unique solution in $Q_L$ of
\[-\triangle h_L=\frac{\mathds1_{B}}{|B|}-L^{-d}.\]
Under~\ref{Mix+} in dimension $d>2$, appealing to Proposition~\ref{prop:CLTscaling}(i) together with~\eqref{eq:nablaphi-bnd} yields for all $p\ge1$,
\begin{multline*}
\Big\|\int_{Q_L}\nabla\phi_{L}\cdot\nabla h_{x,L}\Big\|_{\Ld^{2p}(\Omega)}
\\
\lesssim_p \|\nabla h_{x,L}\|_{\Ld^\frac{2d}{d+2}(Q_L)}
\wedge\, \|\langle\cdot\rangle\nabla h_{x,L}\|_{\Ld^2(Q_L)}
+\|\avoir \nabla h_{x,L}\|_{\Ld^2(Q_L)}.
\end{multline*}
Noting that $\|\nabla^2 h_{x,L}\|_{\Ld^2(Q_L)}\lesssim1$ and that for $d>2$ Riesz potential theory further yields $\|\nabla h_{x,L}\|_{\Ld^2(Q_L)}\lesssim1$,
we are reduced to
\[\Big\|\int_{Q_L}\nabla\phi_{L}\cdot\nabla h_{x,L}\Big\|_{\Ld^{2p}(\Omega)}\lesssim_p 1+\|\nabla h_{x,L}\|_{\Ld^{\frac{2d}{d+2}}(Q_L)} \wedge\, \|\langle\cdot\rangle\nabla h_{x,L}\|_{\Ld^2(Q_L)},\]
while a direct computation with Green's kernel gives
\begin{equation}\label{e.ant-need-more}
\|\nabla h_{x,L}\|_{\Ld^{\frac{2d}{d+2}}(Q_L)} \wedge\, \|\langle\cdot\rangle\nabla h_{x,L}\|_{\Ld^2(Q_L)}\lesssim\begin{cases}
1,&\text{if $d>4$};\\
\log(2+|x|)^\frac12,&\text{if $d=4$};\\
\langle x\rangle^\frac12,&\text{if $d=3$}.
\end{cases}
\end{equation}
Inserting this into~\eqref{eq:phi-bnd0}, the conclusion~\eqref{eq:phi-bnd} follows.
Under~\ref{Hyp+}, rather appealing to Proposition~\ref{prop:CLTscaling}(ii), the conclusion~\eqref{eq:phi-bnd+} follows in the same way.
\end{proof}

\subsection{Preliminaries}
We introduce a number of general tools that play an important role in the proof of Propositions~\ref{prop:CLTscaling} and~\ref{prop:interpol}.

\subsubsection{Multiscale inequalities for higher moments}
The following shows that the multiscale variance inequalities in assumptions~\ref{Mix+} and~\ref{Hyp+} imply corresponding functional inequalities for higher moments.
This is proven in~\cite[Proposition~1.10(ii)]{DG1} for~\ref{Mix+}, and the same proof applies for~\ref{Hyp+}.

\begin{lem}\label{lem:p}
If the random point processes $\{\Pc_L\}_{L\ge 1}$ satisfy~\ref{Mix+}, then there holds for all $p\ge 1$ and all $\sigma(\Pc_L)$-measurable random variables~$Y(\Pc_L)$
with $\expec{Y(\Pc_L)}=0$,
\begin{equation}\label{eq:SGL-p}
\expec{|Y(\Pc_L)|^{2p}}^\frac1{p}\,\lesssim \,p^2 \,\E\bigg[{\int_0^L \bigg(\int_{Q_L}\Big(\partial^{\operatorname{osc}}_{\Pc_L,B_\ell(x)}Y(\Pc_L)\Big)^2dx\bigg)^p \langle\ell\rangle^{-dp}\pi(\ell)\,d\ell}\bigg]^\frac1{p}.
\end{equation}
Likewise, if $\{\Pc_L\}_{L\ge 1}$ satisfy~\ref{Hyp+}, then there holds for all $p\ge 1$ and all $\sigma(\Pc_L)$-measurable random variables~$Y(\Pc_L)$
with $\expec{Y(\Pc_L)}=0$,
\begin{equation}\label{eq:SGL-hyp-p}
\expec{|Y(\Pc_L)|^{2p}}^\frac1{p}\,\lesssim \,p^2 \,\E\bigg[{\int_0^L \bigg(\int_{Q_L}\Big(\partial^{\operatorname{hyp}}_{\Pc_L,B_\ell(x)}Y(\Pc_L)\Big)^2dx\bigg)^p \langle\ell\rangle^{-dp}\pi(\ell)\,d\ell}\bigg]^\frac1{p}.\qedhere
\end{equation}
\end{lem}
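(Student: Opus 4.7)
\medskip\noindent
The plan is to follow the strategy of~\cite{DG1} and deduce the higher-moment estimates from the variance inequalities of~\ref{Mix+} and~\ref{Hyp+} by a standard ``$p$-version'' argument: one applies the variance inequality to a suitable nonlinear function of $Y$, exploits a discrete chain rule for the oscillation/hyperuniform derivative, and closes by a buckling step.

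\medskip\noindent
First, I would apply the variance inequality~\eqref{eq:SGL} not to $Y$ itself but to the signed $p$-th power $Z:=|Y-\expec{Y}|^{p-1}(Y-\expec{Y})$, so that $\expecm{Z^2}=\expecm{|Y-\expec{Y}|^{2p}}$ (after noting that $\expec{Z}$ is controlled by the same quantity via Cauchy-Schwarz, or by a standard centering trick). The second ingredient is a discrete chain rule for the oscillation derivative: for any convex $C^1$ function $F$,
\begin{equation*}
\partial^{\operatorname{osc}}_{\Pc_L,B_\ell(x)}F(Y)\;\le\; |F'(Y^\star_{\ell,x})|\;\partial^{\operatorname{osc}}_{\Pc_L,B_\ell(x)}Y,
\end{equation*}
where $Y^\star_{\ell,x}$ denotes the essential supremum of $|Y(\Pc_L')|$ over all configurations $\Pc_L'$ coinciding with $\Pc_L$ outside $B_\ell(x)$. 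Applied to $F(y)=|y|^{p-1}y$, this yields the pointwise bound $\partial^{\operatorname{osc}}_{\Pc_L,B_\ell(x)}Z\lesssim p\,(Y^\star_{\ell,x})^{p-1}\,\partial^{\operatorname{osc}}_{\Pc_L,B_\ell(x)}Y$.

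\medskip\noindent
Third, inserting this chain rule into the variance inequality for $Z$ and applying H\"older's inequality with conjugate exponents $p$ and $p/(p-1)$ to separate the weight $(Y^\star)^{2(p-1)}$ from $(\partial^{\operatorname{osc}}Y)^2$ yields a bound of the form
\begin{equation*}
\expecm{|Y-\expec{Y}|^{2p}}\;\lesssim\; p^2\,\expecM{\sup_{\ell,x}(Y^\star_{\ell,x})^{2p}}^{(p-1)/p}\!\cdot\expec{\bigg(\int_0^L\!\!\!\int_{Q_L}(\partial^{\operatorname{osc}}_{\Pc_L,B_\ell(x)}Y)^2dx\,\langle\ell\rangle^{-d}\pi(\ell)\,d\ell\bigg)^p}^{1/p}.
\end{equation*}
Since $Y^\star_{\ell,x}\le|Y|+\partial^{\operatorname{osc}}_{\Pc_L,B_\ell(x)}Y$ and the exponent $(p-1)/p<1$ is strictly less than one, the first factor can be reabsorbed into the left-hand side by a buckling argument, at the price of a controlled constant; this gives~\eqref{eq:SGL-p} with the correct $p^2$ prefactor. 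The remaining moment of the sup over $(\ell,x)$ is handled thanks to the superalgebraic decay of~$\pi$, which provides enough room to sum over dyadic scales.

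\medskip\noindent
For the hyperuniform inequality~\eqref{eq:SGL-hyp-p}, the same argument applies verbatim. Indeed, the hyperuniform derivative $\partial^{\operatorname{hyp}}=\partial^{\operatorname{mov}}+L^{-1}\partial^{\operatorname{osc}}$ is a sum of two sup-minus-inf quantities over constrained families of local perturbations, and each component satisfies the same chain-rule estimate as $\partial^{\operatorname{osc}}$ above; the constraint ``$\sharp\Pc'|_{B_\ell(x)}=\sharp\Pc|_{B_\ell(x)}$'' in $\partial^{\operatorname{mov}}$ plays no role in the chain rule. The main obstacle in this proof is precisely the buckling step: one must ensure that the sup-over-perturbations quantity $Y^\star_{\ell,x}$ can be controlled in terms of $|Y|$ plus a term whose moments are already bounded by the same right-hand side, and this is where the superalgebraic decay of $\pi$ (rather than merely integrable decay) becomes essential.
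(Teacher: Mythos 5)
For context: the paper itself disposes of this lemma by citing~\cite[Proposition~1.10(ii)]{DG1} and remarking that the same proof carries over to~\ref{Hyp+}, so you are in effect reproving that proposition, and your overall strategy (apply the variance inequality to $Z=|Y|^{p-1}Y$, use a chain rule for the oscillation, then H\"older and buckling) is indeed the right one. However, there is a genuine gap in your Steps~3--4. You apply H\"older after pulling out the global supremum $\sup_{\ell,x}(Y^\star_{\ell,x})^{2(p-1)}$, and then claim this factor can be reabsorbed since $Y^\star_{\ell,x}\le|Y|+\partial^{\operatorname{osc}}_{\Pc_L,B_\ell(x)}Y$. Only the $|Y|$-part can be absorbed. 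The oscillation is monotone in the ball, so $\sup_{\ell,x}\partial^{\operatorname{osc}}_{\Pc_L,B_\ell(x)}Y$ is comparable to $\partial^{\operatorname{osc}}_{\Pc_L,B_L}Y$, the \emph{total} oscillation of $Y$, and its $2p$-th moment is not controlled by the right-hand side of~\eqref{eq:SGL-p}: there the contribution of the scale $\ell\sim L$ carries the tiny weight $\pi(L)$, so the superalgebraic decay of $\pi$ works \emph{against} your absorption, not for it (and no dyadic decomposition of a supremum can help, since the supremum dominates its largest-scale term). Concretely, already for a linear functional $Y=\sum_n\zeta(x_{n,L})-\expec{\sum_n\zeta(x_{n,L})}$ with bounded $\zeta$ one has $\E[\sup_{\ell,x}(\partial^{\operatorname{osc}}_{\Pc_L,B_\ell(x)}Y)^{2p}]^{(p-1)/p}\sim L^{2d(p-1)}$ while the target right-hand side scales like $L^{d}$ after taking $p$-th roots, so your bound loses a factor $L^{d}$ and cannot yield the lemma. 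The fix, which is how~\cite{DG1} proceeds, is to apply H\"older \emph{under} the $(\ell,x)$-integral, split $(Y^\star_{\ell,x})^{2(p-1)}\lesssim_p|Y|^{2(p-1)}+(\partial^{\operatorname{osc}}_{\Pc_L,B_\ell(x)}Y)^{2(p-1)}$ pointwise in $(\ell,x)$, absorb the first term by Young's inequality, and control the resulting term $\E[\int_0^L\int_{Q_L}(\partial^{\operatorname{osc}}_{\Pc_L,B_\ell(x)}Y)^{2p}\langle\ell\rangle^{-d}\pi(\ell)\,dx\,d\ell]$ by using monotonicity in the form $\partial^{\operatorname{osc}}_{\Pc_L,B_\ell(x)}Y\le\fint_{B_\ell(x)}\partial^{\operatorname{osc}}_{\Pc_L,B_{2\ell}(y)}Y\,dy$, which gives $\int_{Q_L}(\partial^{\operatorname{osc}}_{\Pc_L,B_\ell(x)}Y)^{2p}dx\lesssim\langle\ell\rangle^{-d(p-1)}\big(\int_{Q_L}(\partial^{\operatorname{osc}}_{\Pc_L,B_{2\ell}(y)}Y)^2dy\big)^p$; the dilation $\ell\mapsto2\ell$ is then harmless in the weight. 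This local-averaging step is the missing idea.

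Two further, more minor points. First, your treatment of $\expec{Z}$ is too quick: Cauchy--Schwarz gives $\expec{Z}^2\le\expecm{|Y|^{2p}}$ with constant one, leaving nothing to absorb; one needs, e.g., the interpolation $\expecm{|Y|^{p}}^2\le\expecm{|Y|^{2p}}^{\frac{p-2}{p-1}}\expecm{Y^2}^{\frac{p}{p-1}}$ (or Jensen for $1\le p\le2$) followed by Young's inequality, together with the variance inequality applied to $Y$ itself to control $\expecm{Y^2}$. Second, the chain rule should be justified by monotonicity of $y\mapsto|y|^{p-1}y$ and the mean value theorem rather than convexity, though this is immaterial for the argument; your observation that the constrained move-point family causes no change and that the Hyp$^+$ case follows verbatim is correct and matches the paper's remark.
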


\subsubsection{Annealed regularity theory}\label{sec:reg0}
Another key tool consists of annealed regularity properties for the steady Stokes equation with a random colloidal (non-sedimenting) suspension.
More precisely, given a random forcing $g\in\Ld^\infty(\Omega;C^\infty_\per(Q_L)^{d\times d})$, we consider the unique solution $v_L\in\Ld^\infty(\Omega;H^1_\per(Q_L))$ of the following heterogeneous problem, for almost all $\w$,
\begin{equation}\label{eq:test-v0}
\left\{\begin{array}{ll}
-\triangle v_L^\w+\nabla P_L^\w=\Div g^\w,&\text{in $Q_L\setminus\Ic_L^\w$},\\
\Div v_L^\w=0,&\text{in $Q_L\setminus\Ic_L^\w$},\\
\D(v_L^\w)=0,&\text{in $\Ic^\w_L$},\\
\int_{\partial I_{n,L}^\w}\big(g^\w+\sigma(v_L^\w,P_L^\w)\big)\nu=0,&\forall n,\\
\int_{\partial I_{n,L}^\w}\Theta\nu\cdot\big(g^\w+\sigma(v_L^\w,P_L^\w)\big)\nu=0,&\forall n,\,\forall\Theta\in\Md^\Skew,
\end{array}\right.
\end{equation}
with $\int_{Q_L}v_L^\w=0$.
The energy inequality takes the form
\begin{equation}\label{eq:energy}
\|\nabla v_L^\w\|_{\Ld^2(Q_L)}\le\|g^\w\|_{\Ld^2(Q_L\setminus\Ic_L^\w)}.
\end{equation}
Aside from perturbative Meyers type estimates, no corresponding (deterministic) $\Ld^p$ estimate is expected to hold in general due to heterogeneities --- unless particles are assumed to be sufficiently far apart, cf.~Remark~\ref{rem:reg-hofer} below.
However, in view of homogenization, the heterogeneous Stokes operator can be replaced on large scales by the following ``homogenized'' one, cf.~\cite[Theorem~1]{DG-19},
\begin{equation*}
\left\{\begin{array}{ll}
-\nabla\cdot\bar\Bb\nabla \bar v_L^\w+\nabla\bar P_L^\w=\Div g^\w,&\text{in $Q_L$},\\
\Div\bar v_L^\w=0,&\text{in $Q_L$},
\end{array}\right.
\end{equation*}
for which standard constant-coefficient elliptic regularity theory is available. In this spirit, compared to a generic situation, the solution of~\eqref{eq:test-v0} is expected to have much better regularity when the suspension is sampled by an ergodic ensemble.
This type of result was pioneered by Avellaneda and Lin~\cite{Avellaneda-Lin-87,Avellaneda-Lin-91} in the context of periodic homogenization for divergence-form linear elliptic equations.
In the stochastic setting, while early contributions in form of annealed Green's function estimates appeared in~\cite{Delmotte-Deuschel-05,MaO}, a (quenched) large-scale regularity theory was first outlined by Armstrong and Smart~\cite{AS} (see also \cite{Armstrong-Daniel-16}), and later fully developed in~\cite{AKM-book,GNO-reg}.
For the steady Stokes problem~\eqref{eq:test-v0}, the development of a corresponding large-scale regularity theory is postponed to a forthcoming companion contribution~\cite{DG-20+} that is devoted to the quantitative homogenization of~\eqref{eq:test-v0}.
In the present work, in the spirit of~\cite{DO1}, we appeal to large-scale regularity in form of the following convenient annealed $\Ld^p$ regularity estimate established in~\cite{DG-20+}.

\begin{theor}[Annealed $\Ld^p$ regularity~\cite{DG-20+}]\label{CZ}
Let the random point process $\Pc_L$ be stationary on $Q_L$ and satisfy the hardcore condition in~\ref{Hd} for some $\delta>0$, as well as the improved mixing assumption~\ref{Mix+}.
Given $g\in\Ld^\infty(\Omega;C^\infty_\per(Q_L)^{d\times d})$, the unique solution $v_L\in\Ld^\infty(\Omega;H^1_\per(Q_L)^d)$ of~\eqref{eq:test-v0} satisfies for all $1<p,q<\infty$ and $\eta>0$,
\begin{equation*}
\|[\nabla v_L]_2\|_{\Ld^q(Q_L;\Ld^p(\Omega))}\,\lesssim_{p,q,\eta}\,\|[g]_2\|_{\Ld^q(Q_L;\Ld^{p+\eta}(\Omega))},
\end{equation*}
and for all $0\le r<d(1-\frac1q)$,
\begin{equation*}
\|\langle\cdot\rangle^r[\nabla v_L]_2\|_{\Ld^q(Q_L;\Ld^p(\Omega))}\,\lesssim_{p,q,r,\eta}\,\|\langle\cdot\rangle^r[g]_2\|_{\Ld^q(Q_L;\Ld^{p+\eta}(\Omega))}.
\qedhere
\end{equation*}
\end{theor}

In the perturbative setting $|p-2|,|q-2|\ll1$, the loss of stochastic integrability can be avoided, that is, the above holds with $\eta=0$, which happens to be a useful tool in the proof of Proposition~\ref{prop:CLTscaling}. In addition, such a perturbative statement can be established under mere stationarity and ergodicity assumptions, without any mixing. The proof is based on a simple Meyers type argument and is given in \cite{DG-20+}.

\begin{theor}[Perturbative annealed $\Ld^p$ regularity~\cite{DG-20+}]\label{th:annealed}
Let the random point process~$\Pc_L$ be stationary on $Q_L$ and satisfy the hardcore condition in~\ref{Hd} for some $\delta>0$.
Then, there exists a constant $\eta_0>0$ (only depending on $d,\delta$) such that the following holds:
Given $g\in\Ld^\infty(\Omega;C^\infty_\per(Q_L)^{d\times d})$, the unique solution $v_L\in\Ld^\infty(\Omega;H^1_\per(Q_L)^d)$ of~\eqref{eq:test-v0} satisfies for all~$p,q$ with $|q-2|,|p-2|\le\eta_0$,
\[\|[\nabla v_L]_2\|_{\Ld^q(Q_L;\Ld^p(\Omega))}\,\lesssim\,\|[g]_2\|_{\Ld^q(Q_L;\Ld^p(\Omega))}.\qedhere\]
\end{theor}

\begin{rem}[Dilute $\Ld^p$ regularity]\label{rem:reg-hofer}
In the dilute regime, the recent work of Höfer~\cite{Hofer-19} on the reflection method
easily yields the following version of Theorem~\ref{CZ}; the proof is a direct adaptation of~\cite{Hofer-19} and is omitted.
This also constitutes a variant of the dilute Green's function estimates in~\cite[Lemma~2.7]{Gloria-19}.\\
{\it Let the random point processes $(\Pc_L)_{L\ge1}$ satisfy the general assumptions~\ref{Hd} for some $\delta>0$, and denote by $\delta_L$ the minimal interparticle distance in $\Pc_L$.
For all $1<p,q<\infty$, there exists a constant $\delta_{q}>0$ (only depending on $d,q$) such that, provided $\Pc_L$ is dilute enough in the sense of $\delta_L\ge\delta_{q}$, we have: Given a random forcing $g\in\Ld^\infty(\Omega;C^\infty_\per(Q_L)^{d\times d})$, the unique solution $v_L\in\Ld^\infty(\Omega;H^1_\per(Q_L)^d)$ of~\eqref{eq:test-v0} satisfies
\[\|\nabla v_L\|_{\Ld^q(Q_L;\Ld^p(\Omega))}\,\lesssim\,\|g\|_{\Ld^q(Q_L;\Ld^p(\Omega))},\]
as well as the following deterministic estimate, for almost all $\w$,
\[\|\nabla v_L^\w\|_{\Ld^q(Q_L)}\,\lesssim\,\|g^\w\|_{\Ld^q(Q_L)}.\qedhere\]
}
\end{rem}

\subsubsection{Localized pressure estimates}
We state the following localized estimate on the pressure for the steady Stokes equation.
This is essentially a consequence of standard pressure estimates in~\cite{Galdi} but it requires some additional care since the prefactor in the estimate is uniform with respect to the size of $D$ although~$\Ic_L$ consists of an unbounded number of components. Note that the same result could be stated in $\Ld^q$ for any $1<q<\infty$, and that the Stokes problem below is tailored to cover both equations~\eqref{eq:StokesL+} and~\eqref{eq:test-v0}.

\begin{lem}[Localized pressure estimates]\label{lem:pres}
Let a (deterministic) point set $\Pc_L=\{x_{n,L}\}_n$ satisfy the hardcore condition in~\ref{Hd} for some $\delta>0$. Given $g\in C^\infty_\per(Q_L)^{d\times d}$ and $e'\in\R^d$, let $v_L\in H^1_\per(Q_L)^d$ denote the unique solution of
\begin{equation*}
\left\{\begin{array}{ll}
-\triangle w_L+\nabla Q_L=\Div g-\alpha_L e,&\text{in $Q_L\setminus\Ic_L$},\\
\Div w_L=0,&\text{in $Q_L\setminus\Ic_L$},\\
\D(w_L)=0,&\text{in $\Ic_L$},\\
e'|I_{n,L}|+\int_{\partial I_{n,L}}\big(g+\sigma(w_L,Q_L)\big)\nu=0,&\forall n,\\
\int_{\partial I_{n,L}}\Theta\nu\cdot\big(g+\sigma(w_L,Q_L)\big)\nu=0,&\forall n,\,\forall\Theta\in\Md^\Skew,
\end{array}\right.
\end{equation*}
with $\int_{Q_L}w_L=0$.
Then there holds for all balls $D\subset Q_L$ with radius $r_D$,
\begin{equation*}
\Big\|Q_L-\fint_{D\setminus\Ic_L}Q_L\Big\|_{\Ld^2(D\setminus\Ic_L)}^2\,\lesssim\,r_D^{d+2}|e|^2+\|\nabla w_{L}\|_{\Ld^2(D)}^2+\|g\|^2_{\Ld^2(D\setminus\Ic_L)}.\qedhere
\end{equation*}
\end{lem}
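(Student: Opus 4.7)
\medskip
\noindent\textbf{Proof plan.} The strategy is the standard duality approach to pressure estimates, where the key difficulty is to construct a suitable test function that is compatible with the rigidity constraint on the particles, with a constant independent of the number of particles in $D$. Writing $\bar Q := \fint_{D\setminus\Ic_L} Q_L$, by duality we reduce to estimating $\int_{D\setminus\Ic_L}(Q_L-\bar Q)\,q$ for arbitrary $q\in\Ld^2(D\setminus\Ic_L)$ with $\int_{D\setminus\Ic_L}q=0$ and $\|q\|_{\Ld^2(D\setminus\Ic_L)}\le1$. We will show the existence of a \emph{perforated Bogovskii lift}: a vector field $\psi\in H^1_0(D)$ with
\begin{equation*}
\D(\psi)=0 \text{ on each } I_{n,L}\subset D,\qquad \Div\psi = q \text{ on } D\setminus\Ic_L,\qquad \|\nabla\psi\|_{\Ld^2(D)}\,\lesssim\,\|q\|_{\Ld^2(D\setminus\Ic_L)},
\end{equation*}
with constant independent of $\sharp(\Pc_L\cap D)$.

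\medskip
\noindent The hardcore condition in~\ref{Hd} is essential to construct $\psi$. One first extends $q$ to $\tilde q\in\Ld^2_0(D)$ by setting it equal to a suitable constant on each $I_{n,L}\cap D$ so as to preserve the mean-zero condition and still have $\|\tilde q\|_{\Ld^2(D)}\lesssim\|q\|_{\Ld^2(D\setminus\Ic_L)}$, and applies the standard Bogovskii operator on the ball $D$ to obtain $\tilde\psi\in H^1_0(D)$ with $\Div\tilde\psi = \tilde q$ and $\|\nabla\tilde\psi\|_{\Ld^2(D)}\lesssim\|\tilde q\|_{\Ld^2(D)}$. On each safety annulus $A_n := (1+\delta/2)I_{n,L}\setminus I_{n,L}$ (which are pairwise disjoint by hardcore), one now corrects $\tilde\psi$ by a cutoff interpolation towards a rigid motion $R_n$ chosen as an affine projection of $\tilde\psi$ on $I_{n,L}$, and adjusts the divergence error by a \emph{local} Bogovskii operator on the fixed-shape annulus $A_n$. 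The boundedness of the local Bogovskii operator on a reference annulus, together with the disjointness of the supports (an $\ell^2$-sum over particles), yields the desired uniform bound on $\psi$; the $\Ld^2$-bound $\|\psi\|_{\Ld^2(D)}\lesssim r_D\|q\|_{\Ld^2(D\setminus\Ic_L)}$ then follows by Poincar\'e's inequality.

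\medskip
\noindent With $\psi$ in hand, I test the equation for $w_L$ against $\psi$ and integrate by parts on $D\setminus\Ic_L$, using $\psi=0$ on $\partial D$:
\begin{equation*}
\int_{D\setminus\Ic_L} (Q_L-\bar Q)\,q \,=\, \int_{D\setminus\Ic_L}\!\nabla w_L:\nabla\psi + \int_{D\setminus\Ic_L}\!g:\nabla\psi + \alpha_L\,e\cdot\!\int_{D\setminus\Ic_L}\!\psi + \sum_{n:I_{n,L}\subset D}\!\int_{\partial I_{n,L}}\!(\sigma(w_L,Q_L)+g)\nu\cdot\psi.
\end{equation*}
On each particle, writing $\psi=V_n+\Theta_n(x-x_{n,L})$ with $\Theta_n\in\Md^\Skew$ and using that $(x-x_{n,L})=\nu$ on $\partial I_{n,L}$ (sphericity), the torque balance makes the rotational contribution vanish and the force balance gives $\int_{\partial I_{n,L}}(\sigma+g)\nu\cdot\psi = -e'\cdot|I_{n,L}|V_n = -e'\cdot\int_{I_{n,L}}\psi$. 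The sum over particles is then controlled by Cauchy-Schwarz using $\sharp(\Pc_L\cap D)\lesssim r_D^d$ (hardcore) as $|e'|\,r_D^{d/2}\|\psi\|_{\Ld^2(D)}\lesssim |e'|\,r_D^{1+d/2}\|q\|_{\Ld^2(D\setminus\Ic_L)}$, and similarly for the $\alpha_L e$ term since $\alpha_L\lesssim 1$. Taking the supremum over $q$ and squaring yields the claim.

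\medskip
\noindent The main obstacle is the uniform control in the perforated Bogovskii construction: a naive sum of local estimates over particles would pick up an unwanted factor $\sharp(\Pc_L\cap D)\sim r_D^d$. The hardcore assumption is precisely what ensures that the local corrections have pairwise disjoint supports, so that an $\ell^2$-summability argument absorbs all the local contributions into the single $\Ld^2$-norm of $q$.
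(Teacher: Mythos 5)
Your strategy is in essence the one the paper uses: it constructs (by citing the Bogovskii-type construction of \cite{DG-19}, Step~4.2 of the proof of Proposition~2.1 there) a field $\zeta\in H^1_0(D)$, constant on each particle, with $\Div\zeta=(Q_L-\fint_{D\setminus\Ic_L}Q_L)\mathds1_{D\setminus\Ic_L}$ and $\|\nabla\zeta\|_{\Ld^2(D)}\lesssim\|Q_L-\fint_{D\setminus\Ic_L}Q_L\|_{\Ld^2(D\setminus\Ic_L)}$ with a constant uniform in the number of particles, then tests the equation with $\zeta$, uses the force and torque balances at the particle boundaries, and concludes via $\sharp(\Pc_L\cap D)\lesssim r_D^d$ and Poincar\'e's inequality on $D$, exactly as you do. Your duality formulation with a generic mean-zero $q$ instead of taking the divergence equal to the pressure oscillation itself, and your use of rigid motions rather than constants inside the particles, are immaterial variants; the quantitative bookkeeping at the end (the $r_D^{d+2}|e|^2$ term, the $\avoir$-free bulk terms) matches the paper.

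There is, however, a genuine gap in your treatment of particles that intersect $D$ without being contained in it. Your lift is only required to satisfy $\D(\psi)=0$ on particles $I_{n,L}\subset D$, and your integration by parts on $D\setminus\Ic_L$ only retains boundary terms for those; for a straddling particle the contribution $\int_{\partial I_{n,L}\cap D}\big(g+\sigma(w_L,Q_L)\big)\nu\cdot\psi$ survives, it cannot be absorbed by the force/torque balances (which involve the whole boundary $\partial I_{n,L}$), and it is not controlled by the right-hand side of the lemma: the trace of $\sigma(w_L,Q_L)$ is not bounded by interior $\Ld^2$-norms, and invoking local regularity would reintroduce the very pressure you are estimating. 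The same particles also break your construction: the safety annulus $A_n$ sticks out of $D$, so interpolating towards a nonzero rigid motion is incompatible with $\psi\in H^1_0(D)$, and the local divergence correction would have to live on the varying-shape region $A_n\cap D$, where uniformity of the Bogovskii constant is not automatic. The fix is to force the test field to vanish identically on every particle not contained in $D$ (for a field that is rigid on the whole particle and lies in $H^1_0(D)$ this is automatic), so that only interior particles enter the boundary sum; this is precisely what the paper's cited construction does ($\zeta$ is constant on \emph{all} particles, hence zero on straddling ones, and the equation is tested in its global reformulation on $Q_L$), and handling the divergence correction uniformly near such boundary particles is exactly the delicate point the paper outsources to \cite{DG-19}. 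With that modification your argument goes through.
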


\begin{proof}
Let a ball $D\subset Q_L$ be fixed with radius $r_D$.
Using the Bogovskii operator as e.g.\@ in~\cite[Step~4.2 of the proof of Proposition~2.1]{DG-19}, we can construct a map $\zeta\in H^1_0(D)$ (implicitly extended by~0 on $Q_L\setminus D$) such that $\zeta|_{I_{n,L}}$ is constant for all $n$ and such that
\begin{gather*}
\Div\zeta\,=\,\Big(Q_L-\fint_{D\setminus\Ic_L}Q_L\Big)\mathds1_{D\setminus\Ic_L},\qquad
\|\nabla\zeta\|_{\Ld^2(D)}\,\lesssim\,\Big\|Q_{L}-\fint_{D\setminus\Ic_L}Q_{L}\Big\|_{\Ld^2(D\setminus\Ic_L)},
\end{gather*}
where we emphasize that the prefactor in the last estimate is uniformly bounded independently of $D$ and $L$.
Arguing as in Step~1 of the proof of Theorem~\ref{th:main1}, we note that the equation for $w_L$ implies in the weak sense on the whole periodic cell $Q_L$,
\[-\triangle w_L+\nabla(Q_L\mathds1_{Q_L\setminus\Ic_L})=\Div(g\mathds1_{Q_L\setminus \Ic_L})-\alpha_L e\mathds1_{Q_L\setminus\Ic_L}-\sum_n\delta_{\partial I_{n,L}}\big(g+\sigma(w_L,Q_L)\big)\nu.\]
Testing this equation with $\zeta$, recalling that $\zeta$ is constant inside particles, and using the boundary conditions for $w_L$, we are led to
\begin{multline*}
\int_{Q_L\setminus\Ic_L}Q_{L}\Div\zeta=\int_{Q_L}\nabla\zeta:\nabla w_{L}+\int_{Q_L\setminus \Ic_L}\nabla\zeta:g
+\alpha_Le\cdot\int_{Q_L\setminus\Ic_L}\zeta-e\cdot\sum_n\int_{I_{n,L}}\zeta.
\end{multline*}
Inserting the definition of $\Div\zeta$, recalling that $|\alpha_L|\lesssim1$, and using Poincar\'e's inequality on~$D$, we deduce
\begin{eqnarray*}
\Big\|Q_L-\fint_{D\setminus\Ic_L}Q_L\Big\|_{\Ld^2(D\setminus\Ic_L)}^2&\lesssim& \|\nabla\zeta\|_{\Ld^2(D)}\Big(\|\nabla w_{L}\|_{\Ld^2(D)}+\|g\|_{\Ld^2(D\setminus\Ic_L)}\Big)+|e|\|\zeta\|_{\Ld^1(D)}\\
&\lesssim& \|\nabla\zeta\|_{\Ld^{2}(D)}\Big(r_D^{d+2}|e|^2+\|\nabla w_{L}\|_{\Ld^2(D)}^2+\|g\|^2_{\Ld^2(D\setminus\Ic_L)}\Big)^\frac12,
\end{eqnarray*}
and the claim follows from the bound on the $\Ld^{2}$-norm of $\nabla\zeta$.
\end{proof}

\subsection{Proof of Proposition~\ref{prop:CLTscaling}}
We shall exploit the multiscale variance inequality~\eqref{eq:SGL}, or its hyperuniform version~\eqref{eq:SGL-hyp}, and appeal to a duality argument.
Let the (deterministic) test function $g\in C^\infty_\per(Q_L)^{d\times d}$ be fixed and for all $\w$ let $v_{L}^\w\in H^1_\per(Q_L)^d$ denote the unique solution of the following auxiliary problem,
\begin{equation}\label{eq:test-v}
\left\{\begin{array}{ll}
-\triangle v_{L}^\w+\nabla P_{L}^\w=\nabla\cdot g,&\text{in $Q_L\setminus\Ic_L^\w$},\\
\Div v_L^\w=0,&\text{in $Q_L\setminus\Ic_L^\w$},\\
\D(v_L^\w)=0,&\text{in $\Ic_{L}^\w$},\\
\int_{\partial I_{n,L}^\w}\big(g+\sigma(v_L^\w,P_L^\w)\big)\nu=0,&\forall n,\\
\int_{\partial I_{n,L}^\w}\Theta\nu\cdot\big(g+\sigma(v_L^\w,P_L^\w)\big)\nu=0,&\forall n,\,\forall\Theta\in\Md^\Skew,
\end{array}\right.
\end{equation}
with $\int_{Q_L}v_{L}^\w=0$.
We split the proof into four main steps: the mixing case~(i) is treated in the first three steps, while the simplifications that appear in the hyperuniform case~(ii) are pointed out in the last step.

\medskip
\step1 Fluctuation scaling outside $\Ic_L$: proof that under~\ref{Mix+} in dimension $d>2$ there holds for all $1\le R\le L$, $q\ge1$, and $1\ll p<\infty$,
\begin{multline}\label{eq:st1-SG}
\Big\|\int_{Q_L\setminus\Ic_L}g:\nabla \phi_{L}\Big\|_{\Ld^{2p}(\Omega)}^{2}
\,\lesssim_p\,L^{-d}\Big\|e\cdot\int_{\Ic_L}v_L\Big\|_{\Ld^p(\Omega)}^2\\
+\|g\|_{\Ld^{\frac{2d}{d+2}}(Q_L)}^2
+\|\avoir g\|_{\Ld^2(Q_L)}^2\,\bigg\|\Big(1+\int_{B_{R}}[\nabla\phi_{L}]_2^{2q}\Big)^\frac1q\bigg\|_{\Ld^p(\Omega)}.
\end{multline}
where alternatively the norm $\|g\|_{\Ld^{\frac{2d}{d+2}}(Q_L)}^2$ can be replaced by $\|\langle\cdot\rangle g\|_{\Ld^2(Q_L)}^2$.

\medskip\noindent
Using the version \eqref{eq:SGL-p} of the multiscale variance inequality~\eqref{eq:SGL} to control higher moments, we obtain
\begin{multline}\label{eq:SG-phi}
\Big\|\int_{Q_L\setminus\Ic_L}g:\nabla \phi_{L}\Big\|_{\Ld^{2p}(\Omega)}^{2}\\
\,\lesssim\,p^2\,\E\bigg[\int_0^L\bigg(\int_{Q_L}\Big(\partial^{\operatorname{osc}}_{\Pc_L,B_\ell(x)}\int_{Q_L\setminus\Ic_L}g:\nabla \phi_{L}\Big)^2dx\bigg)^p\langle\ell\rangle^{-dp}\,\pi(\ell)\,d\ell\bigg]^\frac1p,
\end{multline}
and it remains to estimate the oscillation of $\int_{Q_L\setminus\Ic_L} g:\nabla\phi_L$ with respect to $\Pc_L$ on any ball~$B_\ell(x)$.
Given $0\le \ell\le L$ and $x\in\R^d$, and given a realization, let $\Pc_L'$ be a locally finite point set satisfying the hardcore condition in~\ref{Hd}, with $\Pc_L'|_{Q_L\setminus B_\ell(x)}=\Pc_L|_{Q_L\setminus B_\ell(x)}$, and denote by~$\phi_{L}'$ the corresponding solution of equation~\eqref{eq:StokesL+} with $\Pc_L$ replaced by $\Pc'_L$.
We split the proof into three further substeps.

\medskip
\substep{1.1} Proof that
\begin{multline}\label{ant.1.1}
\Big|\int_{Q_L\setminus\Ic_L}g:\nabla\phi_{L}-\int_{Q_L\setminus\Ic_L'}g:\nabla\phi_{L}'\Big|
\,\lesssim\, L^{-d}\Big|e\cdot\int_{\Ic_L}v_L\Big|+\int_{B_{\ell+1}(x)}|v_L|\\
+\Big(\int_{B_{\ell+2}(x)}|\nabla v_L|^2+|\avoir g|^2\Big)^\frac12\Big(\langle\ell\rangle^{d+2}+\int_{B_{\ell+2}(x)}|\nabla\phi_{L}'|^2\Big)^\frac12.
\end{multline}
First decomposing $\int_{Q_L \setminus\Ic_L'} =\int_{Q_L \setminus\Ic_L}+\int_{\Ic_L\setminus\Ic_L'}-\int_{\Ic_L'\setminus\Ic_L}$, we find
\begin{multline*}
\int_{Q_L\setminus\Ic_L}g:\nabla\phi_{L}-\int_{Q_L\setminus\Ic_L'}g:\nabla\phi_{L}'\\
=\int_{Q_L\setminus\Ic_L}g:\nabla(\phi_{L}-\phi_{L}')+\int_{\Ic_L'\setminus\Ic_L}g:\nabla\phi_{L}'-\int_{\Ic_L\setminus\Ic'_L}g:\nabla\phi_{L}',
\end{multline*}
hence, in view of the inclusion $(\Ic_L\setminus\Ic_L')\cup (\Ic_L'\setminus\Ic_L)\subset B_{\ell+1}(x)$,
\begin{multline}\label{eq:decomp-delphi0}
\Big|\int_{Q_L\setminus\Ic_L}g:\nabla\phi_{L}-\int_{Q_L\setminus\Ic'_L}g:\nabla\phi_{L}'\Big|\\
\lesssim\Big|\int_{Q_L\setminus\Ic_L}g:\nabla(\phi_{L}-\phi_{L}')\Big|+\Big(\int_{B_{\ell+1}(x)}|g|^2\Big)^\frac12\Big(\int_{B_{\ell+1}(x)}|\nabla\phi_{L}'|^2\Big)^\frac12,
\end{multline}
and it remains to examine the first right-hand side term.
Arguing similarly as in Step~1 of the proof of Theorem~\ref{th:main1}, we note that the equation~\eqref{eq:test-v} for $v_L$ implies in the weak sense on the whole periodic cell $Q_L$,
\begin{equation}\label{eq:reform-rd-v+}
-\triangle v_L+\nabla(P_L\mathds1_{Q_L\setminus\Ic_L})=\nabla\cdot(g\mathds1_{Q_L\setminus\Ic_L})-\sum_n\delta_{\partial I_{n,L}}\big(g+\sigma(v_L,P_L)\big)\nu.
\end{equation}
Testing this equation with $\phi_{L}-\phi_{L}'$, and recalling that the pressure $P_L$ is only defined up to additive constant and that $\phi_L$ and $\phi_L'$ are divergence-free on the whole periodic cell $Q_L$,  we obtain for any constant~$c_1\in\R$,
\begin{multline*}
\int_{Q_L\setminus\Ic_L}g:\nabla(\phi_{L}-\phi_{L}')
\,=\,-\int_{Q_L}\nabla v_L:\nabla(\phi_{L}-\phi_{L}')\\
-\sum_n\int_{\partial I_{n,L}}(\phi_{L}-\phi_{L}')\cdot\big(g+\sigma(v_L,P_L-c_1)\big)\nu,
\end{multline*}
hence, in view of the boundary conditions for $\phi_L$, $\phi_L'$, and $v_L$, arguing similarly as in Step~2 of the proof of Theorem~\ref{th:main1},
\begin{multline}\label{ant.1.1.1}
\int_{Q_L\setminus\Ic_L}g:\nabla(\phi_{L}-\phi_{L}')
\,=\,-\int_{Q_L}\nabla v_L:\nabla(\phi_{L}-\phi_{L}')\\
+\sum_{n:x_{n,L}\in B_\ell(x)}\int_{\partial I_{n,L}}\Big(\phi_{L}'-\fint_{ I_{n,L}}\phi_{L}'\Big)\cdot\big(g+\sigma(v_L,P_L-c_1)\big)\nu.
\end{multline}
Likewise, testing with $v_L$ the equation for $\phi_{L}-\phi_{L}'$ in the form~\eqref{eq:reform-sedim}, we get for any constants~$c_2,c_2'\in\R$,
\begin{multline*}
-\int_{Q_L}\nabla v_L:\nabla(\phi_{L}-\phi_{L}')=\alpha_L e\cdot\int_{Q_L\setminus\Ic_L}v_L-\alpha_L' e\cdot\int_{Q_L\setminus\Ic_L'}v_L\\
+\sum_n\int_{\partial I_{n,L}} v_L\cdot\sigma(\phi_L,\Pi_L-c_2)\nu
-\sum_n\int_{\partial I_{n,L}'} v_L\cdot\sigma(\phi_L',\Pi_L'-c_2')\nu,
\end{multline*}
which, in view of the choice $\int_{Q_L}v_L=0$ and of the boundary conditions for $\phi_L$, $\phi_L'$, and~$v_L$, turns into
\begin{multline}\label{ant.1.1.2}
\int_{Q_L}\nabla v_L:\nabla(\phi_{L}-\phi_{L}')=\alpha_L e\cdot\int_{\Ic_L}v_L-\alpha_L' e\cdot\int_{\Ic_L'}v_L\\
+\sum_{n:x_{n,L}\in B_\ell(x)}e\cdot\int_{I_{n,L}}v_L-\sum_{n:x_{n,L}'\in B_\ell(x)}e\cdot\int_{I_{n,L}'}v_L
\\
-\sum_{n:x_{n,L}'\in B_\ell(x)}\int_{\partial I_{n,L}'} \Big(v_L-\fint_{I_{n,L}'}v_L\Big)\cdot\sigma(\phi_L',\Pi_L'-c_2')\nu.
\end{multline}
Combined with \eqref{ant.1.1.1}, this yields
\begingroup\allowdisplaybreaks
\begin{multline}\label{eq:bound-Dphi-00}
\int_{Q_L\setminus\Ic_L}g:\nabla(\phi_{L}-\phi_{L}')=-(\alpha_L-\alpha_L') e\cdot\int_{\Ic_L}v_L\\
-(\alpha_L'+1)\bigg(\sum_{n:x_{n,L}\in B_\ell(x)}e\cdot\int_{I_{n,L}}v_L-\sum_{n:x_{n,L}'\in B_\ell(x)}e\cdot\int_{I_{n,L}'}v_L\bigg)\\
-\sum_{n:x_{n,L}'\in B_\ell(x)}\int_{\partial I_{n,L}'} \Big(v_L-\fint_{I_{n,L}'}v_L\Big)\cdot\sigma(\phi_L',\Pi_L'-c_2')\nu\\
+\sum_{n:x_{n,L}\in B_\ell(x)}\int_{\partial I_{n,L}}\Big(\phi_{L}'-\fint_{ I_{n,L}}\phi_{L}'\Big)\cdot\big(g+\sigma(v_L,P_L-c_1)\big)\nu.
\end{multline}
\endgroup
In order to control the different right-hand side terms localized at particle boundaries, we appeal to the local regularity theory for the Stokes equation. We illustrate this on $v_L$: a trace estimate first yields
\[\int_{\partial I_{n,L}}|\nabla v_L|^2+|P_L-c_1|^2\,\lesssim\,\int_{(I_{n,L}+\frac12\delta B)\setminus I_{n,L}}|\avoir \nabla v_L|^2+|\avoir (P_L-c_1)|^2,\]
while, for any constant $c_3\in\R^d$, replacing $v_L$ by $v_L-c_3$, the local regularity theory for the Stokes equation~\eqref{eq:test-v} satisfied by $v_L-c_3$ in $(I_{n,L}+\delta B)\setminus I_{n,L}$ can then be applied in form of e.g.~\cite[Theorem~IV.5.1]{Galdi},
\begin{multline*}
\int_{\partial I_{n,L}}|\nabla v_L|^2+|P_L-c_1|^2\\
\,\lesssim\,\int_{\partial I_{n,L}}\big|\langle\nabla\rangle (v_L-c_3)|_{I_{n,L}}\big|^2+\int_{(I_{n,L}+\delta B)\setminus I_{n,L}}|\nabla v_L|^2+|P_L-c_1|^2+|\avoir g|^2,
\end{multline*}
which yields by Poincar\'e's inequality, for the choice $c_3:=\fint_{I_{n,L}}v_L$, recalling the linearity of $v_L$ inside particles,
\begin{equation}\label{eq:Stokes-reg-1}
\int_{\partial I_{n,L}}|\nabla v_L|^2+|P_L-c_1|^2\,\lesssim\,\int_{I_{n,L}+\delta B}|\nabla v_L|^2+|P_L-c_1|^2\mathds1_{Q_L\setminus\Ic_L}+| \avoir g|^2.
\end{equation}
Likewise, as $\phi_L'$ satisfies \eqref{eq:StokesL+}, we find
\begin{equation}\label{eq:Stokes-reg-2}
\int_{\partial I_{n,L}'}|\nabla\phi_{L}'|^2+|\Pi'_{L}-c_2'|^2\,\lesssim\,\int_{I_{n,L}'+\delta B}|\nabla\phi_{L}'|^2+|\Pi_{L}'-c_2'|^2\mathds1_{Q_L\setminus\Ic_L'}
+|\alpha'_L|^2.
\end{equation}
Inserting these bounds into~\eqref{eq:bound-Dphi-00}, recalling that $|\alpha_L|,|\alpha'_L|\lesssim 1$, and noting that
\begin{multline*}
\alpha_L-\alpha_L'=\frac{|\Ic_L||Q_L\setminus\Ic_L'|-|\Ic_L'||Q_L\setminus\Ic_L|}{|Q_L\setminus\Ic_L||Q_L\setminus\Ic_L'|}=\frac{|Q_L|\big(|\Ic_L|-|\Ic_L'|\big)}{|Q_L\setminus\Ic_L||Q_L\setminus\Ic_L'|}\\
=\frac{|Q_L|\big(|\Ic_L\cap B_{\ell+1}(x)|-|\Ic_L'\cap B_{\ell+1}(x)|\big)}{|Q_L\setminus\Ic_L||Q_L\setminus\Ic_L'|}\lesssim L^{-d}\langle\ell\rangle^d,
\end{multline*}
we are  led to
\begin{multline*}
\Big|\int_{Q_L\setminus\Ic_L}g:\nabla(\phi_{L}-\phi_{L}')\Big|
\,\lesssim\, L^{-d}\langle\ell\rangle^d\Big|e\cdot\int_{\Ic_L}v_L\Big|+\int_{B_{\ell+1}(x)}|v_L|\\
+\Big(\int_{B_{\ell+2}(x)}\big(|\nabla v_L|^2+|P_L-c_1|^2\mathds1_{Q_L\setminus\Ic_L}+|\avoir g|^2\big)\Big)^\frac12\\
\times\Big(\int_{B_{\ell+2}(x)}\big(|\nabla\phi_{L}'|^2+|\Pi_{L}'-c_2'|^2\mathds1_{Q_L\setminus\Ic_L'}+1\big)\Big)^\frac12.
\end{multline*}
Choosing $c_1:=\fint_{B_{\ell+2}(x)\setminus\Ic_L}P_L$ and $c_2':=\fint_{B_{\ell+2}(x)\setminus\Ic'_L}\Pi_{L}'$, and using the pressure estimate of Lemma~\ref{lem:pres},
we deduce
\begin{multline}\label{eq:preant1.1}
\Big|\int_{Q_L\setminus\Ic_L}g:\nabla(\phi_{L}-\phi_{L}')\Big|
\lesssim L^{-d}\langle\ell\rangle^d\Big|e\cdot\int_{\Ic_L}v_L\Big|+\int_{B_{\ell+1}(x)}|v_L|\\
+\Big(\int_{B_{\ell+2}(x)}|\nabla v_L|^2+| \avoir g|^2\Big)^\frac12\Big(\langle\ell\rangle^{d+2}+\int_{B_{\ell+2}(x)}|\nabla\phi_{L}'|^2\Big)^\frac12.
\end{multline}
Combined with~\eqref{eq:decomp-delphi0}, this yields the claim \eqref{ant.1.1}.

\medskip
\substep{1.2}
Proof that in dimension $d>2$,
\begin{equation}\label{ant.1.3}
\int_{B_{\ell+2}(x)}|\nabla\phi_{L}'|^2\,\lesssim\,\langle\ell\rangle^{d+2}+\int_{B_{\ell+2}(x)}|\nabla\phi_{L}|^2.
\end{equation}
Starting from~\eqref{eq:reform-sedim} and arguing as for \eqref{ant.1.1.2}, the energy identity for $\phi_{L}-\phi_{L}'$ takes the following form, for any constants $c_4,c_4'\in\R$,
\begin{multline*}
\int_{Q_L}|\nabla(\phi_{L}-\phi_{L}')|^2=-(\alpha_L-\alpha_L') e\cdot \int_{Q_L\setminus\Ic_L}(\phi_{L}-\phi_{L}')\\
-(\alpha_L'+1)\bigg(\sum_{n:x_{n,L}\in B_\ell(x)}e\cdot\int_{I_{n,L}}(\phi_{L}-\phi_{L}')-\sum_{n:x_{n,L}'\in B_\ell(x)}e\cdot\int_{I_{n,L}'}(\phi_{L}-\phi_{L}')\bigg)\\
+\sum_{n:x_{n,L}\in B_\ell(x)}\int_{\partial I_{n,L}}\Big(\phi_{L}'-\fint_{I_{n,L}}\phi_{L}'\Big)\cdot\big(2\D(\phi_{L})-(\Pi_{L}-c_4)\Id\big)\nu\\
+\sum_{n:x_{n,L}'\in B_\ell(x)}\int_{\partial I_{n,L}'}\Big(\phi_{L}-\fint_{I_{n,L}'}\phi_{L}\Big)\cdot\big(2\D(\phi_{L}')-(\Pi_{L}'-c_4')\Id\big)\nu.
\end{multline*}
Using the local regularity theory for the Stokes equation in form of~\eqref{eq:Stokes-reg-2} as in Substep~1.1, this leads to
\begin{multline*}
\int_{Q_L}|\nabla(\phi_{L}-\phi_{L}')|^2\,\lesssim\, L^{-d}\langle\ell\rangle^d\int_{Q_L}|\phi_{L}-\phi_{L}'|+\int_{B_{\ell+1}(x)}|\phi_{L}-\phi'_{L}|\\
+\Big(\int_{B_{\ell+2}(x)}|\nabla\phi_{L}'|^2+|\Pi_{L}'-c_4'|^2\mathds1_{Q_L\setminus\Ic_L'}\Big)^\frac12\Big(\int_{B_{\ell+2}(x)}|\nabla\phi_{L}|^2+|\Pi_{L}-c_4|^2\mathds1_{Q_L\setminus\Ic_L}\Big)^\frac12.
\end{multline*}
Hence, choosing $c_4:=\fint_{B_{\ell+2}(x)\setminus\Ic_L}\Pi_{L}$ and $c_4':=\fint_{B_{\ell+2}(x)\setminus\Ic_L'}\Pi'_{L}$
and using the pressure estimate of Lemma~\ref{lem:pres} for both $\Pi_L$ and $\Pi_L'$, we obtain
\begin{multline}\label{eq:preant1.3}
\int_{Q_L}|\nabla(\phi_{L}-\phi_{L}')|^2\,\lesssim\, L^{-d}\langle\ell\rangle^d\int_{Q_L}|\phi_{L}-\phi_{L}'|+\int_{B_{\ell+1}(x)}|\phi_{L}-\phi'_{L}|\\
+\Big(\langle\ell\rangle^{d+2}+\int_{B_{\ell+2}(x)}|\nabla\phi_{L}'|^2\Big)^\frac12\Big(\langle\ell\rangle^{d+2}+\int_{B_{\ell+2}(x)}|\nabla\phi_{L}|^2\Big)^\frac12.
\end{multline}
Using Poincar\'e's inequality in the form
\[L^{-d}\int_{Q_L}|\phi_{L}-\phi_{L}'|\lesssim L^{1-\frac d2}\Big(\int_{Q_L}|\nabla(\phi_{L}-\phi_{L}')|^2\Big)^\frac12,\]
and the Poincar\'e-Sobolev inequality for $d>2$ in the form
\begin{eqnarray}
\int_{B_{\ell+1}(x)}|\phi_{L}-\phi'_{L}|&\lesssim&\langle\ell\rangle^{\frac d2+1}\Big(\int_{Q_L}\,[\phi_{L}-\phi_{L}']_2^{\frac{2d}{d-2}}\Big)^{\frac{d-2}{2d}}\nonumber\\
&\lesssim&\langle\ell\rangle^{\frac d2+1}\Big(\int_{Q_L}|\nabla(\phi_{L}-\phi'_{L})|^2\Big)^\frac12,\label{eq:pre-sob-evit}
\end{eqnarray} 
we deduce in dimension $d>2$,
\begin{multline*}
\int_{Q_L}|\nabla(\phi_{L}-\phi_{L}')|^2\,\lesssim\, \langle\ell\rangle^{\frac d2+1}\Big(\int_{Q_L}|\nabla(\phi_{L}-\phi_{L}')|^2\Big)^\frac12\\
+\Big(\langle\ell\rangle^{d+2}+\int_{B_{\ell+2}(x)}|\nabla\phi_{L}'|^2\Big)^\frac12\Big(\langle\ell\rangle^{d+2}+\int_{B_{\ell+2}(x)}|\nabla\phi_{L}|^2\Big)^\frac12,
\end{multline*}
hence,
\begin{equation*}
\int_{Q_L}|\nabla(\phi_{L}-\phi_{L}')|^2\lesssim\Big(\langle\ell\rangle^{d+2}+\int_{B_{\ell+2}(x)}|\nabla\phi_{L}'|^2\Big)^\frac12\Big(\langle\ell\rangle^{d+2}+\int_{B_{\ell+2}(x)}|\nabla\phi_{L}|^2\Big)^\frac12,
\end{equation*}
and the claim~\eqref{ant.1.3} follows by the triangle inequality.

\medskip
\substep{1.3} Proof of~\eqref{eq:st1-SG}.
\\
Using the result~\eqref{ant.1.3} of Substep~1.2 to replace the perturbed corrector $\phi_{L}'$ by $\phi_{L}$ in the right-hand side of the result~\eqref{ant.1.1} of Substep~1.1, we obtain
\begin{multline*}
\Big|\partial^{\operatorname{osc}}_{\Pc,B_\ell(x)}\int_{Q_L\setminus\Ic_L}g:\nabla\phi_{L}\Big|
\lesssim L^{-d}\langle\ell\rangle^d\Big|e\cdot\int_{\Ic_L}v_L\Big|+\Big|\int_{B_{\ell+1}(x)}v_L\Big|\\
+\Big(\int_{B_{\ell+2}(x)}|\nabla v_L|^2+|\avoir g|^2\Big)^\frac12\Big(\langle\ell\rangle^{d+2}+\int_{B_{\ell+2}(x)}|\nabla\phi_{L}|^2\Big)^\frac12.
\end{multline*}
Inserting this into~\eqref{eq:SG-phi}, we find
for all $p\ge1$,
\begin{multline}\label{eq:pre-bnd-aver-phi}
\Big\|\int_{Q_L\setminus\Ic_L}g:\nabla \phi_{L}\Big\|_{\Ld^{2p}(\Omega)}^{2}
\,\lesssim\,p^2L^{-d}\Big\|e\cdot\int_{\Ic_L}v_L\Big\|_{\Ld^p(\Omega)}^2\Big(\int_0^L\langle\ell\rangle^{dp}\pi(\ell)\,d\ell\Big)^\frac1p\\
+p^2\Big\|\int_{Q_L}|v_L|^2\Big\|_{\Ld^p(\Omega)}\Big(\int_0^L\langle\ell\rangle^{dp}\pi(\ell)\,d\ell\Big)^\frac1p\\
+p^2\,\E\bigg[\int_0^L\bigg(\int_{Q_L}\zeta_\ell(x)^2\Big(\fint_{B_{\ell+2}(x)}|\nabla v_L|^2+|\avoir g|^2\Big)\,dx\bigg)^p\langle\ell\rangle^{dp}\pi(\ell)\,d\ell\bigg]^\frac1p,
\end{multline}
where we have set for abbreviation,
\begin{equation*}
\zeta_\ell(x):=\langle\ell\rangle+\Big(\fint_{B_{\ell+2}(x)}|\nabla\phi_{L}|^2\Big)^\frac12.
\end{equation*}
Before estimating the last right-hand side term in~\eqref{eq:pre-bnd-aver-phi}, we first smuggle in a spatial average at some arbitrary scale $0\le R\le L$,
\begin{multline*}
\int_{Q_L}\zeta_\ell(x)^2\Big(\fint_{B_{\ell+2}(x)}|\nabla v_L|^2+|\avoir g|^2\Big)\,dx\\
\,\lesssim\,\int_{Q_L}\Big(\sup_{B_R(y)}\zeta_\ell^2\Big)\bigg(\fint_{B_{\ell+2}(y)}\Big(\fint_{B_{R+1}(x)}[\nabla v_L]_2^2+[\avoir g]_2^2\Big)\,dx\bigg)\,dy.
\end{multline*}
We then use a duality representation to compute the $\Ld^p(\Omega)$-norm of this expression
(in the following, $X$ denotes a random variable, which is independent of the space variable),
\begin{multline*}
\E\bigg[\bigg(\int_{Q_L}\zeta_\ell(x)^2\Big(\fint_{B_{\ell+2}(x)}|\nabla v_L|^2+|\avoir g|^2\Big)\,dx\bigg)^p\bigg]^\frac1p\\
\hspace{-6cm}\,\lesssim\,\sup_{\|X\|_{\Ld^{2p'}(\Omega)}=1}\,\E\bigg[\int_{Q_L}\Big(\sup_{ B_R(y)}\zeta_\ell^2\Big)\\
\times\bigg(\fint_{B_{\ell+2}(y)}\Big(\fint_{B_{R+1}(x)}[\nabla (Xv_L)]_2^2+X^2[\avoir g]_2^2\Big)\,dx\bigg)\,dy\bigg].
\end{multline*}
By H\"older's inequality and by stationarity of $\zeta_\ell$, we find
\begin{multline*}
\E\bigg[\bigg(\int_{Q_L}\zeta_\ell(x)^2\Big(\fint_{B_{\ell+2}(x)}|\nabla v_L|^2+|\avoir g|^2\Big)\,dx\bigg)^p\bigg]^\frac1p
\,\lesssim\, \E\Big[\sup_{B_R}\zeta_\ell^{2p}\Big]^\frac1p\\
\times\sup_{\|X\|_{\Ld^{2p'}(\Omega)}=1} \int_{Q_L}\E \bigg[\bigg(\fint_{B_{\ell+2}(y)} \Big(\fint_{B_{R+1}(x)} [\nabla (Xv_L)]_2^2+X^2 [\avoir g]_2^2\Big)\,dx\bigg)^{p'} \bigg]^\frac1{p'}dy,
\end{multline*}
hence, since $g$ (unlike $v_L$) is deterministic, using Jensen's inequality,
\begin{multline*}
\E\bigg[\bigg(\int_{Q_L}\zeta_\ell(x)^2\Big(\fint_{B_{\ell+2}(x)}|\nabla v_L|^2+|\avoir g|^2\Big)\,dx\bigg)^p\bigg]^\frac1p\\
\,\lesssim\,\E\Big[\sup_{B_R}\zeta_\ell^{2p}\Big]^\frac1p\bigg(\|\avoir g\|_{\Ld^2(Q_L)}^2+\sup_{\|X\|_{\Ld^{2p'}(\Omega)}=1}\,\|[\nabla (Xv_L)]_2\|_{\Ld^2(Q_L;\Ld^{2p'}(\Omega))}^2\bigg).
\end{multline*}
By perturbative annealed $\Ld^p$ regularity theory in form of Theorem~\ref{th:annealed} (without loss of stochastic integrability!), we deduce for~$p\gg1$,
\begin{equation*}
\E\bigg[\bigg(\int_{Q_L}\zeta_\ell(x)^2\Big(\fint_{B_{\ell+2}(x)}|\nabla v_L|^2+|\avoir g|^2\Big)\,dx\bigg)^p\bigg]^\frac1p\,\lesssim\,\E\Big[\sup_{B_R}\zeta_\ell^{2p}\Big]^\frac1p\|\avoir g\|_{\Ld^2(Q_L)}^2,
\end{equation*}
where the supremum of $\zeta_\ell$ can be estimated as follows, for all $q\ge1$,
\begin{equation*}
\E\Big[\sup_{B_R}\zeta_\ell^{2p}\Big]^\frac1p\,\lesssim\,\langle\ell\rangle^2+\bigg\|\Big(\int_{B_{R+1}}[\nabla\phi_{L}]_2^{2q}\Big)^\frac1q\bigg\|_{\Ld^p(\Omega)},
\end{equation*}
For all $1\le R\le L$, $q\ge1$, and $p\gg1$,
inserting this into~\eqref{eq:pre-bnd-aver-phi} yields
\begin{multline*}
\Big\|\int_{Q_L\setminus\Ic_L}g:\nabla \phi_{L}\Big\|_{\Ld^{2p}(\Omega)}^{2}
\,\lesssim\,p^2L^{-d}\Big\|e\cdot\int_{\Ic_L}v_L\Big\|_{\Ld^p(\Omega)}^2\Big(\int_0^L\langle\ell\rangle^{dp}\pi(\ell)\,d\ell\Big)^\frac1p\\
+p^2\Big\|\int_{Q_L}|v_L|^2\Big\|_{\Ld^p(\Omega)}\Big(\int_0^L\langle\ell\rangle^{dp}\pi(\ell)\,d\ell\Big)^\frac1p\\
+p^2\|\avoir g\|_{\Ld^2(Q_L)}^2\,\bigg\|\Big(1+\int_{B_{R+1}}[\nabla\phi_{L}]_2^{2q}\Big)^\frac1q\bigg\|_{\Ld^p(\Omega)}
\Big(\int_0^L\langle\ell\rangle^{(d+2)p}\pi(\ell)\,d\ell\Big)^\frac1p.
\end{multline*}
Using the Poincar\'e-Sobolev inequality in dimension $d>2$, Jensen's inequality, and the non-perturbative annealed $\Ld^p$ regularity theory in form of Theorem~\ref{CZ}, recalling that $g$ is deterministic, we find for $1<p<\infty$,
\begin{multline}\label{ant.1.4.1}
\Big\|\int_{Q_L}|v_L|^2\Big\|_{\Ld^p(\Omega)}\,\lesssim\,\bigg\|\Big(\int_{Q_L}|\nabla v_L|^\frac{2d}{d+2}\Big)^\frac{d+2}d\bigg\|_{\Ld^p(\Omega)}\\
\,\lesssim\,\|\nabla v_L\|_{\Ld^\frac{2d}{d+2}(Q_L;\Ld^{2p}(\Omega))}^2\,\lesssim_p\,\|g\|_{\Ld^\frac{2d}{d+2}(Q_L)}^2.
\end{multline}
Combining this with the above, and using the superalgebraic decay of the weight~$\pi$,
the claim~\eqref{eq:st1-SG} follows.

\medskip
\substep{1.4} Modification of~\eqref{eq:st1-SG}.\\
The estimate~\eqref{eq:st1-SG} fails to give the correct power of the logarithm in~\eqref{e.ant-need-more} in the critical dimension $d=4$, in which case the estimate needs to be slightly modified.
To this end, we use the following refined version of the Poincar\'e inequality: There exists a universal constant $C>0$ such that for all $L\ge 1$ and all $\zeta \in H^1_\per(Q_L)$ with vanishing average $\int_{Q_L} \zeta =0$, we have
\begin{equation}\label{eq:Poinc-weight}
\int_{Q_L} |\zeta|^2 \,\le\, C \int_{Q_L} |x|^2|\nabla \zeta(x)|^2 dx, 
\end{equation}
in favor of which we presently argue. By scaling it is enough to consider $L=1$, and we proceed by contradiction. Assume there exists a sequence $(\zeta_n)_n \subset H^1_\per(Q)$ with the following properties: $\int_Q |\zeta_n|^2 =1$, $\int_Q \zeta_n=0$, and $\int_{Q} |x|^2 |\nabla \zeta_n(x)|^2 dx \to 0$.
First, by weak compactness, up to a subsequence, $\zeta_n$ converges weakly to some limit $\zeta$ in $\Ld^2_\per(Q)$, with $\int_Q\zeta=0$.
Next, for all $\e>0$, we find $\int_{Q\setminus B_\e}  |\nabla \zeta_n|^2 \le \e^{-2}\int_{Q} |x|^2 |\nabla \zeta_n(x)|^2 dx\to0$, which entails that the limit $\zeta$ must be a constant in $Q$, hence $\zeta=0$ since $\int_Q\zeta=0$.
Using Rellich's theorem e.g.\@ in the annulus $Q\setminus \frac12Q$, we further find that the restriction $\zeta_n|_{Q\setminus\frac12Q}$ converges strongly to $0$ in $\Ld^2(Q)$.
Given a cut-off function $\chi\in C^\infty_c(Q)$ with $\chi|_{\frac12Q}=1$ and $0\le\chi\le1$, we compute
\begin{eqnarray*}
\int_Q\chi^2|\zeta_n|^2&=&\tfrac1d\int_Q\chi(x)^2|\zeta_n(x)|^2\,(\Div\, x)\,dx\\
&=&-\tfrac2d\int_Q\chi(x)^2\zeta_n(x)\otimes x:\nabla\zeta_n(x)\,dx-\tfrac2d\int_Q\chi(x)|\zeta_n(x)|^2x\cdot\nabla\chi(x)\,dx\\
&\lesssim&\Big(\int_Q\chi^2|\zeta_n|^2\Big)^\frac12\Big(\int_Q|x|^2|\nabla\zeta_n(x)|^2\,dx+\int_Q|\nabla\chi|^2|\zeta_n|^2\Big)^\frac12,
\end{eqnarray*}
and thus, using the properties of $\chi$,
\begin{eqnarray*}
\int_{\tfrac12Q}|\zeta_n|^2
&\lesssim&\int_Q|x|^2|\nabla\zeta_n(x)|^2\,dx+\int_{Q\setminus\frac12Q}|\zeta_n|^2.
\end{eqnarray*}
As the restriction $\zeta_n|_{Q\setminus\frac12Q}$ converges strongly to $0$ in $\Ld^2(Q)$, we deduce that $\zeta_n$ also converges strongly to $0$ in $\Ld^2(Q)$, which gives the claimed contradiction.

\medskip\noindent
Applying~\eqref{eq:Poinc-weight} to $v_L$, we thus have 
$
\int_{Q_L}|v_L|^2 \,\lesssim \, \int_{Q_L} \langle\cdot\rangle^2|\nabla v_L|^2,
$
and we now appeal to the non-perturbative weighted annealed $\Ld^p$ regularity theory in form of Theorem~\ref{CZ}, to the effect that
\begin{equation*}
\Big\|\int_{Q_L}|v_L|^2\Big\|_{\Ld^p(\Omega)}\,\lesssim\,\Big\|\int_{Q_L}\langle\cdot\rangle^2|\nabla v_L|^2\Big\|_{\Ld^p(\Omega)}
\,\lesssim_p\,\|\langle\cdot\rangle g\|_{\Ld^2(Q_L)}^2.
\end{equation*}
Using this estimate instead of~\eqref{ant.1.4.1} in Substep~1.3 yields the claimed modification of~\eqref{eq:st1-SG}.

\medskip
\step2
Proof that under~\ref{Mix+} in dimension $d>2$ there holds for all $1\le R\le L$, $q\ge1$, and $1\ll p<\infty$,
\begin{equation}\label{eq:st2-SG}
\Big\|e\cdot\int_{\Ic_L}v_L\Big\|_{\Ld^{2p}(\Omega)}^2\lesssim_p\|g\|_{\Ld^{\frac{2d}{d+2}}(Q_L)}
+\|\avoir g\|_{\Ld^2(Q_L)}^2\,\bigg\|\Big(1+\int_{B_{R}}[\nabla\phi_{L}]_2^{2q}\Big)^\frac1q\bigg\|_{\Ld^p(\Omega)}.
\end{equation}
This estimate allows to upgrade \eqref{eq:st1-SG} to
\begin{multline}\label{eq:st12-SG}
\Big\|\int_{Q_L\setminus\Ic_L}g:\nabla \phi_{L}\Big\|_{\Ld^{2p}(\Omega)}^{2}
\,\lesssim_p\,\|g\|_{\Ld^{\frac{2d}{d+2}}(Q_L)}^2\\
+\|\avoir g\|_{\Ld^2(Q_L)}^2\,\bigg\|\Big(1+\int_{B_{R}}[\nabla\phi_{L}]_2^{2q}\Big)^\frac1q\bigg\|_{\Ld^p(\Omega)}.
\end{multline}
We turn to the argument for~\eqref{eq:st2-SG}.
Using the version~\eqref{eq:SGL-p} of the multiscale variance inequality~\eqref{eq:SGL} to control higher moments, we can write
\begin{equation}\label{eq:SG-phi+}
\Big\|e\cdot\int_{\Ic_L}v_L\Big\|_{\Ld^{2p}(\Omega)}^2
\,\lesssim\,p^2\,\E\bigg[\int_0^L\bigg(\int_{Q_L}\Big(\partial^{\operatorname{osc}}_{\Pc_L,B_\ell(x)}\,e\cdot\int_{\Ic_L}v_L\Big)^2dx\bigg)^p\langle\ell\rangle^{-dp}\,\pi(\ell)\,d\ell\bigg]^\frac1p,
\end{equation}
and it remains to estimate the oscillation.
Given $0\le\ell\le L$ and $x\in Q_L$, and given a realization, let $\Pc_L'$ be a locally finite point set satisfying the hardcore condition in~\ref{Hd}, with $\Pc_L'|_{Q_L\setminus B_\ell(x)}=\Pc_L|_{Q_L\setminus B_\ell(x)}$, and denote by $v_L'$ the corresponding solution of~\eqref{eq:test-v} with~$\Pc_L$ replaced by $\Pc'_L$.
We decompose
\begin{eqnarray}
\Big|e\cdot\int_{\Ic_L}v_L-e\cdot\int_{\Ic_L'}v_L'\Big|&\le&\Big|e\cdot\int_{\Ic_L'}(v_L-v_L')\Big|+\Big|e\cdot\int_{\Ic_L}v_L-e\cdot\int_{\Ic_L'}v_L\Big| \nonumber\\
&\lesssim&\Big|e\cdot\int_{\Ic_L'}(v_L-v_L')\Big|+\int_{B_{\ell+1}(x)}|v_L|.\label{eq:2term-decomp}
\end{eqnarray}
Testing the equation~\eqref{eq:reform-sedim} for $\phi_{L}'$ with $v_L-v_L'$, we obtain for any constant $c_1'\in\R$,
\begin{multline*}
\int_{Q_L}\nabla\phi_{L}':\nabla(v_L-v_L')\\
\,=\,-\alpha_L'e\cdot\int_{Q_L\setminus\Ic_L'}(v_L-v_L')
-\sum_n\int_{\partial I_{n,L}'}(v_L-v_L')\cdot\sigma(\phi_L',\Pi_{L}'-c_1')\nu,
\end{multline*}
hence, in view of the boundary conditions and of the choice $\int_{Q_L}(v_L-v_L')=0$,
\begin{multline*}
(\alpha_L'+1)\,e\cdot\int_{\Ic_L'}(v_L-v_L')
=\int_{Q_L}\nabla\phi_{L}':\nabla(v_L-v_L')\\
+\sum_{n:x_{n,L}'\in B_{\ell}(x)}\int_{\partial I_{n,L}'}\Big(v_L-\fint_{I'_{n,L}}v_L\Big)\cdot\sigma(\phi_{L}',\Pi_{L}'-c_1')\nu.
\end{multline*}
Next, testing the equation~\eqref{eq:reform-rd-v+} for $v_L-v_L'$ with $\phi_{L}'$, we obtain for any constant $c_2\in\R$,
\begin{multline*}
(\alpha_L'+1)\,e\cdot\int_{\Ic_L'}(v_L-v_L')
=-\sum_{n:x_{n,L}\in B_\ell(x)}\int_{\partial I_{n,L}}\Big(\phi_{L}'-\fint_{I_{n,L}}\phi_{L}'\Big)\cdot\big(g+\sigma(v_L,P_L-c_2)\big)\\
+\sum_{n:x_{n,L}'\in B_\ell(x)}\int_{\partial I_{n,L}'}\Big(v_L-\fint_{I'_{n,L}}v_L\Big)\cdot\sigma(\phi_L',\Pi_L'-c_1')\nu.
\end{multline*}
In view of the local regularity theory for the Stokes equation in form of~\eqref{eq:Stokes-reg-1} and~\eqref{eq:Stokes-reg-2}, together with the pressure estimates of Lemma~\ref{lem:pres}, we deduce
\begin{equation*}
\Big|e\cdot\int_{\Ic_L'}(v_L-v_L')\Big|
\,\lesssim\,\Big(\langle\ell\rangle^{d+2}+\int_{B_{\ell+2}(x)}|\nabla\phi_{L}'|^2\Big)^\frac12\Big(\int_{B_{\ell+2}(x)}|\nabla v_L|^2+|\avoir g|^2\Big)^\frac12.
\end{equation*}
Using the bound~\eqref{ant.1.3} of Substep~1.2 to replace $\phi_L'$ by $\phi_L$ in the right-hand side, and inserting this into the decomposition~\eqref{eq:2term-decomp}, we are led to
\begin{multline*}
\Big|\partial^{\operatorname{osc}}_{\Pc_L,B_\ell(x)}e\cdot\int_{\Ic_L}v_L\Big|
\,\lesssim\,\Big|\int_{B_{\ell+1}(x)}v_L\Big|\\
+\Big(\langle\ell\rangle^{d+2}+\int_{B_{\ell+2}(x)}|\nabla\phi_{L}|^2\Big)^\frac12\Big(\int_{B_{\ell+2}(x)}|\nabla v_L|^2+|\avoir g|^2\Big)^\frac12.
\end{multline*}
Inserting this into~\eqref{eq:SG-phi+},   the claim~\eqref{eq:st2-SG} follows as in  Substep~1.3.

\medskip
\step3 Fluctuation scaling on $\Ic_L$: proof that under~\ref{Mix+} in dimension $d>2$ there holds for all $1\le R\le L$, $q\ge1$, and $1\ll p<\infty$,
\begin{equation*}
\Big\|\int_{\Ic_L}g:\nabla\phi_{L}\Big\|_{\Ld^{2p}(\Omega)}^2\,\lesssim_p\,\|g\|_{\Ld^{\frac{2d}{d+2}}(Q_L)}^2
+\|g\|_{\Ld^2(Q_L)}^2\bigg\|\Big(1+\int_{B_{R}}[\nabla\phi_{L}]_2^{2q}\Big)^\frac1q\bigg\|_{\Ld^p(\Omega)},
\end{equation*}
which yields the conclusion~(i) in combination with~\eqref{eq:st12-SG}.

\medskip\noindent
Using the version~\eqref{eq:SGL-p} of the multiscale variance inequality~\eqref{eq:SGL} to control higher moments, we can write
\begin{multline}\label{eq:SG-phi++}
\Big\|\int_{\Ic_L}g:\nabla\phi_{L}\Big\|_{\Ld^{2p}(\Omega)}^2\\
\,\lesssim\,p^2\,\E\bigg[\int_0^L\bigg(\int_{Q_L}\Big(\partial^{\operatorname{osc}}_{\Pc_L,B_\ell(x)}\,\int_{\Ic_L}g:\nabla\phi_{L}\Big)^2dx\bigg)^p\langle\ell\rangle^{-dp}\,\pi(\ell)\,d\ell\bigg]^\frac1p,
\end{multline}
and it remains to estimate the oscillation.
Given $0\le\ell\le L$ and $x\in\R^d$, and given a realization, let $\Pc_L'$ be a locally finite point set satisfying the hardcore condition in~\ref{Hd}, with $\Pc_L'|_{Q_L\setminus B_\ell(x)}=\Pc_L|_{Q_L\setminus B_\ell(x)}$, and denote by $\phi_{L}'$ the corresponding solution of equation~\eqref{eq:StokesL+} with $\Pc_L$ replaced by $\Pc'_L$.
We decompose
\begin{multline*}
{\Big|\int_{\Ic_L}g:\nabla\phi_{L}-\int_{\Ic_L'}g:\nabla\phi_{L}'\Big|}
\,\le\, \bigg| \sum_{n:x_{n,L} \not\in B_\ell(x)} \int_{I_{n,L}} g: (\nabla \phi_L-\nabla \phi_L') \bigg|
\\
+\bigg|\sum_{n:x_{n,L} \in B_\ell(x)} \int_{I_{n,L}} g: \nabla \phi_L -\sum_{n:x'_{n,L} \not\in B_\ell(x)} \int_{I_{n,L}'} g:  \nabla \phi_L' \bigg|.
\end{multline*}
Since $\phi_{L}$ and $\phi_{L}'$ are both affine inside particles $I_{n,L}$'s with $x_{n,L}\notin B_\ell(x)$, we can further write
\begin{multline*}
\Big|\int_{\Ic_L}g:\nabla\phi_{L}-\int_{\Ic_L'}g:\nabla\phi_{L}'\Big|
\,\lesssim\,\bigg|\sum_{n}\Big(\fint_{I_{n,L}}g\Big):\int_{I_{n,L}}(\nabla\phi_{L}-\nabla\phi_{L}')\bigg|\\
+\Big(\int_{B_{\ell+1}(x)}|g|^2\Big)^\frac12\Big(\int_{B_{\ell+1}(x)}|\nabla\phi_{L}|^2+|\nabla\phi_{L}'|^2\Big)^\frac12.
\end{multline*}
Using the bound~\eqref{ant.1.3} of Substep~1.2 to replace $\phi_L'$ by $\phi_L$ in the right-hand side, this yields
\begin{multline}\label{eq:pre-st3-bnd-IL}
\Big|\int_{\Ic_L}g:\nabla\phi_{L}-\int_{\Ic_L'}g:\nabla\phi_{L}'\Big|
\,\lesssim\,\bigg|\sum_{n}\Big(\fint_{I_{n,L}}g\Big):\int_{I_{n,L}}(\nabla\phi_{L}-\nabla\phi_{L}')\bigg|\\
+\Big(\int_{B_{\ell+1}(x)}|g|^2\Big)^\frac12\Big(\langle\ell\rangle^{d+2}+\int_{B_{\ell+2}(x)}|\nabla\phi_{L}|^2\Big)^\frac12.
\end{multline}
It remains to estimate the first right-hand side term.
By a standard use of the Bogovskii operator in form of~\cite[Theorem~III.3.1]{Galdi}, we can construct a divergence-free tensor field $h_L\in H^1_\per(Q_L)^{d\times d}$ such that $h_L=0$ outside $\Ic_L+\delta B=\{x\in Q_L\,:\,\mathrm{dist}(x,\Ic_L)<\delta\}$ and such that for all $n$ there hold
\begin{gather}
h_L|_{I_{n,L}+\delta B}\in H^1_0(I_{n,L}+\delta B)^{d\times d},\qquad\qquad h_L|_{I_{n,L}}=\fint_{I_{n,L}}g,\nonumber\\
\|h_L\|_{H^1(I_{n,L}+\delta B)}\lesssim\|g\|_{\Ld^2(I_{n,L})}.\label{eq:bnd-h-g}
\end{gather}
In these terms, using that $h_L$ is divergence-free, we can write by means of Stokes' formula,
\begin{eqnarray*}
\sum_{n}\Big(\fint_{I_{n,L}}g\Big):\int_{I_{n,L}}(\nabla\phi_{L}-\nabla\phi_{L}')&=&\sum_{n}\Big(\fint_{I_{n,L}}g\Big):\int_{\partial I_{n,L}}(\phi_{L}-\phi_{L}')\otimes\nu\\
&=&\sum_{n}\int_{\partial I_{n,L}}h_L:(\phi_{L}-\phi_{L}')\otimes\nu\\
&=&-\int_{Q_L\setminus\Ic_L}\nabla_i\big(h_L:(\phi_{L}-\phi_{L}')\otimes\ee_i\big)\\
&=&-\int_{Q_L\setminus\Ic_L}h_L:\nabla(\phi_{L}-\phi_{L}'),
\end{eqnarray*}
so that we are reduced to the sensitivity of the gradient $\nabla\phi_L$ outside particles as already studied in Step~1.
Appealing to the bound~\eqref{eq:preant1.1} of Substep~1.1 together with the result~\eqref{ant.1.3} of Substep~1.2, and combining with~\eqref{eq:pre-st3-bnd-IL}, we obtain
\begin{multline*}
\Big|\partial^{\operatorname{osc}}_{\Pc,B_\ell(x)}\int_{\Ic_L}g:\nabla\phi_{L}\Big|
\lesssim L^{-d}\langle\ell\rangle^d\Big|e\cdot\int_{\Ic_L}v_{L,h}\Big|+\int_{B_{\ell+1}(x)}|v_{L,h}|\\
+\Big(\int_{B_{\ell+2}(x)}|\nabla v_{L,h}|^2+|\avoir {h_L}|^2+|g|^2\Big)^\frac12\Big(\langle\ell\rangle^{d+2}+\int_{B_{\ell+2}(x)}|\nabla\phi_{L}|^2\Big)^\frac12,
\end{multline*}
where $v_{L,h}$ denotes the solution of the auxiliary problem~\eqref{eq:test-v} with $g$ replaced by $h_L$.
Inserting this into~\eqref{eq:SG-phi++}, proceeding as in Substep~1.3, and taking advantage of the bound~\eqref{eq:bnd-h-g} in form of the pointwise estimate $[\langle\nabla\rangle h_L]_2(x)\lesssim\int_{B_4(x)}[g]_2^2$, the claim follows.

\medskip
\step4 Hyperuniform case~(ii).\\
The hyperuniformity assumption~\ref{Hyp+} allows to replace the multiscale inequality~\eqref{eq:SGL-p} by its version~\eqref{eq:SGL-hyp-p}, where the oscillation derivative~$\partial^{\operatorname{osc}}$ is replaced by its hyperuniform counterpart~$\partial^{\operatorname{hyp}}:=\partial^{\operatorname{mov}}+L^{-1}\partial^{\operatorname{osc}}$. The main part~$\partial^{\operatorname{mov}}$ only accounts for local perturbations with a fixed number of points, in accordance with the suppression of density fluctuations, while the second part involves the full oscillation~$\partial^{\operatorname{osc}}$ but has the small prefactor $L^{-1}$.
Given a random variable $Y(\Pc_L)$ with $\expec{Y(\Pc_L)}=0$, we write~\eqref{eq:SGL-hyp-p} as
\begin{equation}\label{eq:decomposcetc}
\|Y(\Pc_L)\|_{\Ld^{2p}(\Omega)}^2\,\lesssim\,p^2\,\Ec_p^{\operatorname{hyp}}[Y(\Pc_L)]\,\le\,p^2\,\Ec_p^{\operatorname{mov}}[Y(\Pc_L)]+p^2L^{-2}\Ec_p^{\operatorname{osc}}[Y(\Pc_L)],
\end{equation}
where we have set for abbreviation, for $\star=\operatorname{osc}$, $\operatorname{hyp}$, $\operatorname{mov}$,
\[\Ec_p^\ast[Y(\Pc_L)]\,:=\,\E\bigg[\int_0^L\bigg(\int_{Q_L}\Big(\partial^\star_{\Pc_L,B_\ell(x)}Y(\Pc_L)\Big)^2dx\bigg)^p\langle\ell\rangle^{-dp}\pi(\ell)\,d\ell\bigg]^\frac1p.\]
We split the proof into two substeps, separately considering the contribution of $\Ec_p^{\operatorname{mov}}$ and of $L^{-2}\Ec_p^{\operatorname{osc}}$ for $Y(\Pc_L)=\int_{Q_L}g:\nabla\phi_L$.

\medskip
\substep{4.1} Main contribution $\Ec_p^{\operatorname{mov}}$.\\
In Step~1, given $0\le \ell\le L$ and $x\in\R^d$, and given a realization, we now let $\Pc_L'$ be a locally finite point set satisfying the hardcore condition in $Q_L$, with $\Pc_L'|_{Q_L\setminus B_\ell(x)}=\Pc_L|_{Q_L\setminus B_\ell(x)}$ and with the additional constraint $\sharp\Pc_L'|_{B_\ell(x)}=\sharp\Pc_L|_{B_\ell(x)}$.
In particular, the latter implies $\alpha_L=\alpha_L'$, so that~\eqref{eq:bound-Dphi-00} becomes
\begin{multline}\label{eq:bound-Dphi-00+}
\int_{Q_L\setminus\Ic_L}g:\nabla(\phi_{L}-\phi_{L}')=-(\alpha_L+1)\bigg(\sum_{n:x_{n,L}\in B_\ell(x)}e\cdot\int_{I_{n,L}}v_L-\sum_{n:x_{n,L}'\in B_\ell(x)}e\cdot\int_{I_{n,L}'}v_L\bigg)\\
-\sum_{n:x_{n,L}'\in B_\ell(x)}\int_{\partial I_{n,L}'} \Big(v_L-\fint_{I_{n,L}'}v_L\Big)\cdot\sigma(\phi_L',\Pi_L'-c_2')\nu\\
+\sum_{n:x_{n,L}\in B_\ell(x)}\int_{\partial I_{n,L}}\Big(\phi_{L}'-\fint_{ I_{n,L}}\phi_{L}'\Big)\cdot\big(g+\sigma(v_L,P_L-c_1)\big)\nu.
\end{multline}
The last two terms are estimated as in Substep~1.1, but we argue differently for the first one.
The restriction $\sharp\Pc_L'|_{B_\ell(x)}=\sharp\Pc_L|_{B_\ell(x)}$ allows to write $\Pc_L|_{B_\ell(x)}:=\{x_{n_j,L}\}_{j=1}^m$ and $\Pc_L'|_{B_\ell(x)}:=\{x_{n_j',L}'\}_{j=1}^m$ for some $0\le m\lesssim \langle\ell\rangle^d$. 
We can then easily reformulate as follows the first right-hand side term of~\eqref{eq:bound-Dphi-00+}, using number conservation and disjointness,
\begin{multline*}
\bigg|\sum_{n:x_{n,L}\in B_\ell(x)}e\cdot\int_{I_{n,L}}v_L-\sum_{n:x_{n,L}'\in B_\ell(x)}e\cdot\int_{I_{n,L}'}v_L\bigg|\\
\,\le\,\sum_{j=1}^m\bigg|\int_{B(x_{n_j,L})}v_L-\int_{B(x'_{n_j',L})}v_L\bigg|
\,\lesssim\, \langle\ell\rangle\int_{B_{\ell+1}(x)}|\nabla v_L|,
\end{multline*}
so that the result~\eqref{ant.1.1} of Substep~1.1 is replaced by  
\begin{multline*}
\Big|\int_{Q_L\setminus\Ic_L}g:\nabla\phi_{L}-\int_{Q_L\setminus\Ic_L'}g:\nabla\phi_{L}'\Big|
\,\lesssim\, \langle\ell\rangle \int_{B_{\ell+1}(x)}|\nabla v_L|
\\
+
\Big(\int_{B_{\ell+2}(x)}|\nabla v_L|^2+|\avoir g|^2\Big)^\frac12\Big(\langle\ell\rangle^{d+2}+\int_{B_{\ell+2}(x)}|\nabla\phi_{L}'|^2\Big)^\frac12.
\end{multline*}
Next, a similar argument as above shows that the bound~\eqref{eq:preant1.3} in Substep~1.2 is replaced by
\begin{multline*}
\int_{Q_L}|\nabla(\phi_{L}-\phi_{L}')|^2\,\lesssim\, \langle\ell\rangle \int_{B_{\ell+1}(x)}|\nabla(\phi_{L}-\phi'_{L})|\\
+\Big(\langle\ell\rangle^{d+2}+\int_{B_{\ell+2}(x)}|\nabla\phi_{L}'|^2\Big)^\frac12\Big(\langle\ell\rangle^{d+2}+\int_{B_{\ell+2}(x)}|\nabla\phi_{L}|^2\Big)^\frac12,
\end{multline*}
so that the result~\eqref{ant.1.3} now holds in any dimension $d\ge1$.
Arguing as in Substep~1.3, the result~\eqref{eq:st1-SG} of Step~1 is then replaced by the following: under~\ref{Hyp+} in any dimension, for all $1\le R\le L$, $q\ge1$, and~$1\ll p<\infty$,
\begin{equation*}
\Ec_p^{\operatorname{mov}}\Big[\int_{Q_L\setminus\Ic_L}g:\nabla \phi_{L}\Big]
\,\lesssim_p\,
\|\avoir g\|_{\Ld^2(Q_L)}^2\,\bigg\|\Big(1+\int_{B_{R}}[\nabla\phi_{L}]_2^{2q}\Big)^\frac1q\bigg\|_{\Ld^p(\Omega)}.
\end{equation*}
Likewise, the result of Step~3 is replaced by the following: under~\ref{Hyp+} in any dimension, for all $1\le R\le L$, $q\ge1$, and $1\ll p<\infty$,
\begin{equation*}
\Ec_p^{\operatorname{mov}}\Big[\int_{\Ic_L}g:\nabla\phi_{L}\Big]\,\lesssim_p\,\|g\|_{\Ld^2(Q_L)}^2\bigg\|\Big(1+\int_{B_{R}}[\nabla\phi_{L}]_2^{2q}\Big)^\frac1q\bigg\|_{\Ld^p(\Omega)},
\end{equation*}
and hence,
\begin{equation*}
\Ec_p^{\operatorname{mov}}\Big[\int_{Q_L}g:\nabla\phi_{L}\Big]\,\lesssim_p\,\|\avoir g\|_{\Ld^2(Q_L)}^2\bigg\|\Big(1+\int_{B_{R}}[\nabla\phi_{L}]_2^{2q}\Big)^\frac1q\bigg\|_{\Ld^p(\Omega)}.
\end{equation*}

\medskip

\substep{4.2} Remainder term $L^{-2}\Ec_p^{\operatorname{osc}}$.\\
The contribution of $\Ec_p^{\operatorname{osc}}$ has already been computed in Steps~1--3 in dimension $d>2$, and it remains to show that the same bound hold for $L^{-2}\Ec_p^{\operatorname{osc}}$ in any dimension. We only need to revisit the two places where the restriction to $d>2$ is used, that is, Substep~1.2 and the estimate~\eqref{ant.1.4.1} in Substep~1.3.

\medskip\noindent
We start with revisiting Substep~1.2. Instead of using the Poincar\'e-Sobolev inequality~\eqref{eq:pre-sob-evit}, we use Poincar\'e's inequality in the following form,
\begin{eqnarray*}
\int_{B_{\ell+1}(x)}|\phi_{L}-\phi'_{L}|\lesssim\langle\ell\rangle^\frac d2\Big(\int_{Q_L}|\phi_{L}-\phi'_{L}|^2\Big)^\frac12\lesssim L\langle\ell\rangle^\frac d2\Big(\int_{Q_L}|\nabla(\phi_{L}-\phi'_{L})|^2\Big)^\frac12,
\end{eqnarray*} 
so that the conclusion~\eqref{ant.1.3} is replaced by the following, in any dimension $d\ge1$,
\begin{equation}\label{eq:modif001}
\int_{B_{\ell+2}(x)}|\nabla\phi_{L}'|^2\,\lesssim\,L^2\langle\ell\rangle^{d}+\int_{B_{\ell+2}(x)}|\nabla\phi_{L}|^2.
\end{equation}
Next, we revisit the estimate~\eqref{ant.1.4.1} in Substep~1.3: instead of appealing to the Poincar\'e-Sobolev inequality and to non-perturbative annealed $\Ld^p$ regularity in form of Theorem~\ref{CZ}, we simply use Poincar\'e's inequality and the energy inequality~\eqref{eq:energy}, in any dimension $d\ge1$,
\begin{equation}\label{eq:modif002}
\Big\|\int_{Q_L}|v_L|^2\Big\|_{\Ld^p(\Omega)}\,\lesssim\,L^2\Big\|\int_{Q_L}|\nabla v_L|^2\Big\|_{\Ld^p(\Omega)}\,\lesssim\,L^2\|g\|_{\Ld^2(Q_L)}^2.
\end{equation}
Up to these two modifications~\eqref{eq:modif001} and~\eqref{eq:modif002}, the conclusion of Steps~1--3 becomes the following, for any dimension $d\ge1$, for all $1\le R\le L$, $q\ge1$, and $1\ll p<\infty$,
\[\Ec_p^{\operatorname{osc}}\Big[\int_{Q_L}g:\nabla\phi_L\Big]\,\lesssim_p\,L^2\|\avoir g\|_{\Ld^2(Q_L)}^2\bigg\|\Big(1+\int_{B_{R+1}}[\nabla\phi_L]_2^{2q}\Big)^\frac1q\bigg\|_{\Ld^p(\Omega)}.\]
Multiplying both sides by $L^{-2}$, and inserting this into~\eqref{eq:decomposcetc} together with the result of Substep~4.1, the conclusion~(ii) follows. In contrast with the proof of~(i), we note that~(ii) only requires perturbative annealed $\Ld^p$ regularity in form of Theorem~\ref{th:annealed}.
\qed

\subsection{Proof of Proposition~\ref{prop:interpol}}
Given a ball $D\subset Q_L$ with radius $r_D\ge3$ and given arbitrary constants $c_D\in\R^d$ and $c_D'\in\R$, testing the equation~\eqref{eq:reform-sedim} for $\phi_{L}$ with $\eta_D^2(\phi_{L}-c_D)$,
where $\eta_D$ denotes a cut-off function with $\eta_D=1$ in $D$, $\eta_D=0$ outside $2D$, and $|\nabla\eta_D|\lesssim\frac1{r_D}$, such that $\eta_D$ is constant in $I_{n,L}$ for all $n$, using the boundary conditions and recalling that $\Div \phi_L=0$, we easily obtain the following Caccioppoli type estimate,
\begin{multline*}
\int_{D}|\nabla \phi_{L}|^2
\lesssim\frac{1}{r_D^2}\int_{2D}|\phi_{L}-c_D|^2\\
+\Big(\int_{2D}|\Pi_{L}-c_D'|^2\mathds1_{Q_L\setminus\Ic_L}\Big)^\frac12\Big(\frac{1}{r_D^2}\int_{2D}|\phi_{L}-c_D|^2\Big)^\frac12
+\int_{2D}|\phi_{L}-c_D|.
\end{multline*}
Bounding the last right-hand side term by
\[\int_{2D}|\phi_{L}-c_D|\lesssim r_D^{d+2}+\frac1{r_D^2}\int_{2D}|\phi_{L}-c_D|^2,\]
choosing $c_D':=\fint_{2D\setminus\Ic_L}\Pi_{L}$, and applying the pressure estimate of Lemma~\ref{lem:pres}, we obtain for all $K\ge1$,
\begin{equation}\label{eq:bound-nabphi-cacc}
\fint_{D}|\nabla \phi_{L}|^2
\lesssim \frac{K^2}{r_D^2}\fint_{2D}|\phi_{L}-c_D|^2+\frac1{K^2}\fint_{2D}|\nabla \phi_{L}|^2+r_D^{2}.
\end{equation}
Using the Poincar\'e-Sobolev inequality to estimate the first right-hand side term, with the choice $c_D:=\fint_{2D}\phi_{L}$, we deduce 
\[\Big(\fint_{D}|\nabla \phi_{L}|^2\Big)^\frac12
\lesssim K\Big(\fint_{2D}|\nabla\phi_{L}|^\frac{2d}{d+2}\Big)^\frac{d+2}{2d}+\frac1{K}\Big(\fint_{2D}|\nabla \phi_{L}|^2\Big)^\frac12+r_D.\]
While this is proven here for all balls $D$ with radius $r_D\ge3$, smuggling in local quadratic averages at scale~1 allows to infer that for all balls $D$ (with any radius $r_D>0$) and $K\ge1$, 
\[\Big(\fint_{D}[\nabla \phi_{L}]_2^2\Big)^\frac12
\lesssim K\Big(\fint_{3D}[\nabla\phi_{L}]_2^\frac{2d}{d+2}\Big)^\frac{d+2}{2d}+\frac1{K}\Big(\fint_{3D}[\nabla \phi_{L}]_2^2\Big)^\frac12+ r_D + 1.\]
Choosing $K$ large enough and applying Gehring's lemma~\cite{Gehring,GiaMo},
we deduce the following Meyers type estimate: there exists some $\eta_0>0$ (only depending on $d,\delta$) such that for all $1\le q\le 1+\eta_0$ and all~$R\ge3$,
\begin{equation*}
\Big(\fint_{B_R}[\nabla \phi_{L}]_2^{2q}\Big)^\frac1{q}
\lesssim R^2+\fint_{B_{3R}}[\nabla\phi_{L}]_2^2.
\end{equation*}
Combining this with~\eqref{eq:bound-nabphi-cacc}, we obtain for all $K\ge1$, for any constant $c_R\in\R^d$,
\begin{equation}\label{gl.2}
\Big(\fint_{B_R}[\nabla \phi_{L}]_2^{2q}\Big)^\frac1{q}
\lesssim \frac{K^2}{R^2}\fint_{B_{8R}}|\phi_{L}-c_R|^2+\frac1{K^2}\fint_{B_{8R}}|\nabla \phi_{L}|^2+R^2.
\end{equation}
For $1\le r\le R$, choosing $c_R:=\fint_{B_{8R}}\chi_r\ast\phi_{L}$, Poincar\'e's inequality yields
\begin{eqnarray*}
\fint_{B_{8R}}|\phi_{L}-c_R|^2&\lesssim &\fint_{B_{8R}}|\phi_{L}-\chi_r\ast\phi_{L}|^2+\fint_{B_{8R}}|\chi_r\ast\phi_{L}-c_R|^2\\
&\lesssim_\chi&r^2\fint_{B_{8R}}|\nabla\phi_{L}|^2+R^2\fint_{B_{8R}}|\chi_r\ast\nabla\phi_{L}|^2.
\end{eqnarray*}
Inserting this into \eqref{gl.2}, we find
\begin{equation*}
\Big(\fint_{B_R}|\nabla \phi_{L}|^{2q}\Big)^\frac1q
\lesssim \Big(K^2\frac{r^2}{R^2}+\frac1{K^2}\Big)\fint_{B_{8R}}|\nabla\phi_{L}|^2+K^2\fint_{B_{8R}}|\chi_r\ast\nabla\phi_{L}|^2+R^{2}.
\end{equation*}
Since stationarity and Jensen's inequality yield
\[\Big\|\fint_{B_{8R}}|\nabla \phi_{L}|^2\Big\|_{\Ld^p(\Omega)}\lesssim\Big\|\fint_{B_{R}}|\nabla \phi_{L}|^2\Big\|_{\Ld^p(\Omega)}\lesssim\bigg\|\Big(\fint_{B_{R}}|\nabla \phi_{L}|^{2q}\Big)^\frac1q\bigg\|_{\Ld^p(\Omega)}\]
and
$$
\expec{\Big(\fint_{B_{8R}} |\chi_r * \nabla \phi_L|^2\Big)^p} \le \expec{\fint_{B_{8R}} |\chi_r * \nabla \phi_L|^{2p}}= \expec{ |\chi_r * \nabla \phi_L|^{2p}},
$$
this implies
\begin{multline*}
\bigg\|\Big(\fint_{B_{R}}|\nabla \phi_{L}|^{2q}\Big)^\frac1q\bigg\|_{\Ld^p(\Omega)}\\
\lesssim_\chi\Big(K^2 \frac{r^2}{R^2}+\frac1{K^2}\Big)\bigg\|\Big(\fint_{B_{R}}|\nabla \phi_{L}|^{2q}\Big)^\frac1q\bigg\|_{\Ld^p(\Omega)}
+K^2\Big\|\int_{Q_L}\chi_r\nabla\phi_{L}\Big\|_{\Ld^{2p}(\Omega)}^2+R^{2}.
\end{multline*}
Choosing $K\gg1$ and $R\gg_{\chi,K} r$, the first right-hand side term can be absorbed and the conclusion follows.
\qed

%%%%%%%%%%%%%%%%%%
%%%%%%%%%%%%%%%%%%

\medskip
\section{Homogenization result}\label{sec:hom}

The proof of Theorem~\ref{th:hom} combines the quantitative estimates of Theorems~\ref{th:main1}--\ref{th:main2} together with the homogenization result for colloidal (non-sedimenting) suspensions in~\cite[Theorem~1]{DG-19}.
In particular, this qualitative result requires mixing assumptions and quantitative estimates.

\begin{proof}[Proof of Theorem~\ref{th:hom}]
We start with a suitable splitting of the Stokes problem~\eqref{eq:Stokes}.
In terms of the renormalized pressure $\tilde P_\e:=P_\e-\frac1\e \lambda e \cdot x$, rewriting the boundary conditions as follows,
\begin{eqnarray*}
0&=&\e^{d-1}e |I_n^\w|+  \int_{\e\partial I_n^\w}\sigma(u_\e^\w,P_\e^\w)\nu
\\
&=& \e^{d-1}e |I_n^\w|- \int_{\e\partial I_n^\w} \tfrac{1}\e( \lambda e\cdot x)\nu+    \int_{\e\partial I_n^\w}\sigma(u_\e^\w,\tilde P_\e^\w)\nu
\\
&=&\e^{d-1} (1-\lambda)e |I_n^\w| +    \int_{\e\partial I_n^\w}\sigma(u_\e^\w,\tilde P_\e^\w)\nu,
\end{eqnarray*}
and for all $\Theta\in\Md^\Skew$,
\begin{equation*}
0\,=\, \int_{\e\partial I_n^\w}\Theta\nu\cdot\sigma(u_\e^\w,P_\e^\w)\nu\,=\,\int_{\e\partial I_n^\w}\Theta\nu\cdot\sigma(u_\e^\w,\tilde P_\e^\w)\nu,
\end{equation*}
we can rewrite~\eqref{eq:Stokes} in the following equivalent form,
\begin{equation}\label{eq:Stokes++}
\left\{\begin{array}{ll}
-\triangle u_\e^\w+\nabla \tilde P_\e^\w=f- \frac1\e \lambda e,&\text{in $U\setminus\Ic_\e^\w(U)$},\\
\Div u_\e^\w=0,&\text{in $U\setminus\Ic_\e^\w(U)$},\\
u_\e^\w=0,&\text{on $\partial U$},\\
\D(u_\e^\w)=0,&\text{in $\Ic_\e^\w(U)$},\\
\e^{d-1}(1-\lambda) e |I_n^\w|+\int_{\e\partial I_n^\w}\sigma(u_\e^\w,\tilde P_\e^\w)\nu=0,&\forall n\in\Nc_\e^\w(U),\\
\int_{\e\partial I_n^\w}\Theta\nu\cdot\sigma(u_\e^\w,\tilde P_\e^\w)\nu=0,&\forall n\in\Nc_\e^\w(U),\,\forall\Theta\in\Md^\Skew,
\end{array}\right.
\end{equation}
By linearity and since $\alpha=\frac \lambda{1-\lambda}$,
we may then decompose the solution into two parts,
\[u_\e^\w=u_{\e,1}^\w+(1-\lambda)u_{\e,2}^\w,\qquad \tilde P_\e^\w=P_{\e,1}^\w+(1-\lambda)P_{\e,2}^\w,\]
where $(u_{\e,1}^\w,P_{\e,1}^\w)$ solves
\begin{equation}\label{eq:Stokes-1.0}
\left\{\begin{array}{ll}
-\triangle u_{\e,1}^\w+\nabla P_{\e,1}^\w=f ,&\text{in $U\setminus\Ic_\e^\w(U)$},\\
\Div u_{\e,1}^\w=0,&\text{in $U\setminus\Ic_\e^\w(U)$},\\
u_{\e,1}^\w=0,&\text{on $\partial U$},\\
\D(u_{\e,1}^\w)=0,&\text{in $\Ic_\e^\w(U)$},\\
\int_{\e\partial I_n^\w}\sigma(u_{\e,1}^\w,P_{\e,1}^\w)\nu=0,&\forall n\in\Nc_\e^\w(U),\\
\int_{\e\partial I_n^\w}\Theta\nu\cdot\sigma(u_{\e,1}^\w,P_{\e,1}^\w)\nu=0,&\forall n\in\Nc_\e^\w(U),\,\forall\Theta\in\Md^\Skew,
\end{array}\right.
\end{equation}
while $(u_{\e,2}^\w,P_{\e,2}^\w)$ is a rescaled proxy with Dirichlet boundary conditions on $U$ for the ``sedimentation corrector'' $(\phi,\Pi)$ in Theorem~\ref{th:main1}, cf.~\eqref{eq:cor-sed},
\begin{equation}\label{eq:Stokes-2.0}
\left\{\begin{array}{ll}
-\triangle u_{\e,2}^\w+\nabla P_{\e,2}^\w=-\frac1\e \alpha e,&\text{in $U\setminus\Ic_\e^\w(U)$},\\
\Div u_{\e,2}^\w=0,&\text{in $U\setminus\Ic_\e^\w(U)$},\\
u_{\e,2}^\w=0,&\text{on $\partial U$},\\
\D(u_{\e,2}^\w)=0,&\text{in $\Ic_\e^\w(U)$},\\
\e^{d-1}e|I_n^\w|+\int_{\e\partial I_n^\w}\sigma(u_{\e,2}^\w,P_{\e,2}^\w)\nu=0,&\forall n\in\Nc_\e^\w(U),\\
\int_{\e\partial I_n^\w}\Theta\nu\cdot\sigma(u_{\e,2}^\w,P_{\e,2}^\w)\nu=0,&\forall n\in\Nc_\e^\w(U),\,\forall\Theta\in\Md^\Skew,
\end{array}\right.
\end{equation}
where we recall $\alpha=\frac{\lambda}{1-\lambda}$.
We split the proof into two steps, and analyze the two contributions separately.

\medskip
\step1 Homogenization of~\eqref{eq:Stokes-1.0}.\\
The system~\eqref{eq:Stokes-1.0} coincides with the equations for a steady Stokes fluid with a colloidal (non-sedimenting) suspension, as we already studied in \cite{DG-19}.
In view of~\cite[Theorem~1]{DG-19}, for almost all $\w$, there holds
$u_{\e,1}^\w\rightharpoonup \bar u$ weakly in $H^1_0(U)$ and 
\[\Big(P_{\e,1}^\w-\fint_{U\setminus\Ic_\e^\w(U)}P_{\e,1}^\w\Big)\mathds1_{U\setminus\Ic_\e^\w(U)}-\Big(\bar P+\bb:\D(\bar u)-\fint_U\bar P\Big)\mathds1_{U\setminus\Ic_\e^\w(U)}~\cvf~0,\qquad\text{weakly in $\Ld^2(U)$},\]
where $(\bar u,\bar P)$ denotes the unique weak solution of the homogenized problem~\eqref{eq:Stokes-hom}.
Moreover, provided $f\in\Ld^p(U)$ for some $p>d$, for almost all $\w$, in view of~\cite[Theorem~1]{DG-19}, a corrector result holds for the velocity field in form of
\begin{equation}\label{e.corr-stokes-1.0}
\Big\|u_{\e,1}^\w-\bar u-\e\sum_{E\in\Ec}\psi_E^\w(\tfrac\cdot\e)\nabla_E\bar u\Big\|_{H^1(U)}\to0,
\end{equation}
and for the pressure field in form of 
\begin{equation}\label{e.corr-stokes-1.1}
\inf_{\kappa\in\R}\Big\|P_{\e,1}^\w-\bar P-\bb:\D(\bar u)-\sum_{E\in\Ec}(\Sigma_E^\w\mathds1_{\R^d\setminus\Ic^\w})(\tfrac\cdot\e)\nabla_E\bar u-\kappa\Big\|_{\Ld^2({U\setminus\Ic_\e^\w(U)})}\to0.
\end{equation}

\medskip
\step2 Analysis of~\eqref{eq:Stokes-2.0}: proof that, under~\ref{Mix+} for $d>2$ or under~\ref{Hyp+} for any $d\ge1$, there holds for almost all $\omega$,
\begin{gather}\label{eq:claim-ueps2}
\|u_{\e,2}^\w-\e \phi^\w(\tfrac \cdot \e)\|_{H^1(U)} \to 0,\\
\inf_{\kappa\in\R}\big\|P_{\e,2}^\w-(\Pi^\w\mathds1_{\R^d\setminus\Ic^\w})(\tfrac\cdot\e)-\kappa\big\|_{\Ld^2(U\setminus\Ic^\w_\e(U))}^2\to0,\nonumber
\end{gather}
where we recall that $(\phi,\Pi)$ is the unique solution of the infinite-volume problem~\eqref{eq:cor-sed} as given by Theorem~\ref{th:main1}.
More precisely, we claim that for all $1\le p<\infty$,
\begin{gather}\label{eq:convLp}
\|u_{\e,2}-\e\phi(\tfrac\cdot\e)\|_{\Ld^p(\Omega; H^1(U))}^2\lesssim_p\e\mu_d(\tfrac1\e)^\frac32,\\
\inf_{\kappa\in\R}\big\|\big(P_{\e,2}-(\Pi\mathds1_{\R^d\setminus\Ic})(\tfrac\cdot\e)-\kappa\big)\mathds1_{U\setminus\Ic_\e(U)}\big\|_{\Ld^p(\Omega;\Ld^2(U))}^2\lesssim_p \e\mu_d(\tfrac1\e)^\frac32,\nonumber
\end{gather}
and that for all $\kappa>0$ there exists a random variable $\mathcal X_\kappa$ with bounded moments 
such that for almost all $\w$,
\begin{gather}\label{eq:conv-as}
\|u_{\e,2}^\w-\e\phi^\w(\tfrac\cdot\e)\|_{H^1(U)}^2\lesssim  (\mathcal X^\w_\kappa)^2 \e^{1-2\kappa} \mu_d(\tfrac1\e)^{\frac32} ,\\
\inf_{\kappa\in\R}\big\|\big(P_{\e,2}^\w-(\Pi^\w\mathds1_{\R^d\setminus\Ic^\w})(\tfrac\cdot\e)-\kappa\big)\mathds1_{U\setminus\Ic_\e(U)}\big\|_{ \Ld^2(U)}^2\lesssim  (\mathcal X^\w_\kappa)^2 \e^{1-2\kappa} \mu_d(\tfrac1\e)^{\frac32},\nonumber
\end{gather}
in terms of
\[\mu_d(r):=\left\{\begin{array}{lll}
1&:&\text{under~\ref{Mix+} with $d>4$, or under~\ref{Hyp+} with $d>2$};\\
\log(2+ r)^\frac12&:&\text{under~\ref{Mix+} with $d=4$, or under~\ref{Hyp+} with $d=2$};\\
\langle r\rangle^\frac12&:&\text{under~\ref{Mix+} with $d=3$, or under~\ref{Hyp+} with $d=1$}.
\end{array}\right.\]

\medskip
\noindent
We focus on the convergence of $u_{\e,2}$.  The corresponding convergence of the pressure~$P_{\e,2}$ is obtained similarly, further using the Bogovskii operator as in the proof of Lemma~\ref{lem:pres} (see also~\cite[Substep~8.3 of Section~3]{DG-19}); details are omitted. We split the proof into three further substeps.

\medskip
\substep{2.1} Proof that for all  $1\le R\le \frac1\e$,
\begin{equation}\label{eq:preergthm}
\|u_{\e,2}^\w-\e\phi^\w(\tfrac\cdot\e)\|_{H^1(U)}^2\lesssim\e R^3+\e^d\int_{\partial_{R}U_\e}\Big(\frac1{R^2}|\phi^\w|^2+|\nabla\phi^\w|^2\Big),
\end{equation}
where we use the short-hand notation $U_\e:=\frac1\e U$ and $\partial_{R}U_\e:=\{x\in U_\e:\operatorname{dist}(x,\partial U_\e)<R\}$.

\medskip\noindent
We start by rescaling the equations: the functions $v_\e(x):=\frac1\e u_{\e,2}(\e x)$ and $Q_\e(x):=P_{\e,2}(\e x)$ on $U_\e$ satisfy
\begin{equation}\label{eq:Stokes-3.0}
\left\{\begin{array}{ll}
-\triangle v_{\e}^\w+\nabla Q_{\e}^\w=- \alpha e,&\text{in $U_\e\setminus\Ic^\w(U_\e)$},\\
\Div v_{\e}^\w=0,&\text{in $U_\e\setminus\Ic^\w(U_\e)$},\\
v^\w_{\e}=0,&\text{on $\partial U_\e$},\\
\D(v^\w_{\e})=0,&\text{in $\Ic^\w(U_\e)$},\\
e |I_n^\w|+\int_{\partial I_n^\w}\sigma(v_{\e}^\w,Q_{\e}^\w)\nu=0,&\forall n\in\Nc^\w(U_\e),\\
\int_{\partial I_n^\w}\Theta\nu\cdot\sigma(v_{\e}^\w,Q_{\e}^\w)\nu=0,&\forall n\in\Nc^\w(U_\e),\,\forall\Theta\in\Md^\Skew,
\end{array}\right.
\end{equation}
where we use the short-hand notation $\Ic^\w(U_\e):=\Ic_1^\w(U_\e)$ and $\Nc^\w(U_\e):=\Nc_1^\w(U_\e)$.
This Stokes system~\eqref{eq:Stokes-3.0} is formally the approximation of the infinite-volume problem~\eqref{eq:cor-sed} on the set~$U_\e$ with homogeneous Dirichlet boundary conditions (and discarding particles that are close to the boundary of $U_\e$); the claim~\eqref{eq:claim-ueps2} is therefore not surprising.
Arguing as in Step~1 of the proof of Theorem~\ref{th:main1}, we note that the Stokes equation for $v_\e^\w$ implies in the weak sense on the whole rescaled domain $U_\e$,
\begin{equation}\label{eq:Stokes-4.0}
-\triangle v_{\e}^\w+\nabla (Q_{\e}^\w\mathds1_{U_\e\setminus\Ic^\w(U_\e)}) =- \alpha e \mathds1_{U_\e\setminus\Ic^\w(U_\e)}- \sum_{n\in\Nc^\w(U_\e)}\delta_{\partial I_n^\w}\sigma(v_{\e}^\w,Q_{\e}^\w)\nu,
\end{equation}
which we compare to the corresponding equation for $\phi^\w$ on the whole space $\R^d$, cf.~\eqref{eq:cor-sed},
\[-\triangle\phi^\w+\nabla(\Pi^\w\mathds1_{\R^d\setminus\Ic^\w})=-\alpha e\mathds1_{\R^d\setminus\Ic^\w}-\sum_n\delta_{\partial I_n^\w}\sigma(\phi^\w,\Pi^\w)\nu.\]
We choose a smooth cut-off function $\eta_\e^\w:\R^d \to [0,1]$ such that $\eta_\e^\w$ is supported in $U_\e$, $\eta_\e^\w$~is constant inside the particles $I_n^\w$ and vanishes in $I_n^\w$ for $n\notin\Nc^\w(U_\e)$. In addition, given some $1\le R\le \frac1\e$ (that will be chosen later depending on $\e$ and $d$), we assume that $\eta_\e^\w$ satisfies $\eta_\e^\w(x)=1$ for all $x\in U_\e$ with $\operatorname{dist}(x,\partial U_\e)\ge R$, and $|\nabla\eta_\e^\w|\lesssim\frac1{R}$.
The above equation for $\phi^\w$ entails that $\eta_\e^\w \phi^\w$ satisfies the following in the weak sense on the whole space $\R^d$,
\begin{multline}\label{eq:Stokes-5.0}
-\triangle (\eta_\e^\w \phi^\w) +\nabla(\eta_\e^\w\Pi^\w\mathds1_{\R^d\setminus\Ic^\w }) =- \eta_\e^\w \alpha e \mathds1_{\R^d\setminus\Ic^\w}- \eta_\e^\w\sum_{n}\delta_{\partial I_n^\w}\sigma(\phi^\w,\Pi^\w)\nu\\
-\big(\nabla\phi^\w - \Pi^\w\mathds1_{\R^d\setminus\Ic^\w }\big)\nabla\eta_\e^\w-\nabla \cdot (\phi^\w\otimes\nabla \eta_\e^\w).
\end{multline}
Substracting~\eqref{eq:Stokes-5.0} from~\eqref{eq:Stokes-4.0}, and adding arbitrary constants to the pressures, we obtain for any $c,c'\in\R$,
\begin{multline*}
 -\triangle (v_{\e}^\w-\eta_\e^\w \phi^\w)\,=\,\nabla \Big(\eta_\e^\w(\Pi^\w-c)\mathds1_{\R^d\setminus\Ic^\w}-(Q_{\e}^\w-c')\mathds1_{U_\e\setminus\Ic^\w(U_\e)}\Big)\\
 + e \big(\alpha\eta_\e^\w \mathds1_{\R^d\setminus\Ic^\w}-\alpha \mathds1_{U_\e\setminus\Ic^\w(U_\e)}\big)
+\big(\nabla\phi^\w - (\Pi^\w-c)\mathds1_{\R^d\setminus\Ic^\w }\big)\nabla\eta_\e^\w+\nabla \cdot (\phi^\w\otimes\nabla \eta_\e^\w)\\
 +\sum_{n\in\Nc^\w(U_\e)}\delta_{\partial I_n^\w}\Big(\eta_\e^\w\sigma(\phi^\w,\Pi^\w-c)-\sigma(v_{\e}^\w,Q_{\e}^\w-c')\Big)\nu.
\end{multline*}
Testing this equation with $v_{\e}^\w-\eta_\e^\w \phi^\w\in H^1_0(U_\e)$, recalling that both $v_{\e}^\w$ and  $\phi^\w$ are divergence-free, noting that $v_{\e}^\w-\eta_\e^\w \phi^\w$ is constant inside the particles $I_n^\w$ with $n\in\Nc^\w(U_\e)$, using the boundary conditions for $v_\e^\w$ and $\phi^\w$, and using the properties of $\eta_\e^\w$,
we are led to
\begin{multline}\label{eq:est-homog-qu}
\int_{U_\e}|\nabla(v_\e^\w-\eta_\e^\w\phi^\w)|^2\lesssim\int_{\partial_{R}U_\e}|v_\e^\w-\eta_\e^\w\phi^\w|\\
+\frac1{R}\int_{\partial_{R}U_\e}|\phi^\w|\big(|Q_\e^\w-c'|\mathds1_{U_\e\setminus\Ic^\w(U_\e)}+|\Pi^\w-c|\mathds1_{\R^d\setminus\Ic^\w}\big)\\
+\frac1{R}\int_{\partial_{R}U_\e}|v_\e^\w-\eta_\e^\w\phi^\w|\big(|\nabla\phi^\w|+|\Pi^\w-c|\mathds1_{\R^d\setminus\Ic^\w}\big)\\
+\frac1{R}\int_{\partial_{R}U_\e}|\nabla(v_\e^\w-\eta_\e^\w\phi^\w)||\phi^\w|.
\end{multline}
We separately estimate the different right-hand side terms, and we start with the first one.  
Using Cauchy-Schwarz' inequality and Poincar\'e's inequality on $H^1_0(U_\e)$ restricted to the annulus $\partial_{R}U_\e$ (with Poincaré constant $O(R)$), we find for all $K\ge1$,
\begin{eqnarray*}
\int_{\partial_{R}U_\e} |v_{\e}^\w-\eta_\e^\w \phi^\w| &\lesssim& (\e^{1-d}R)^\frac12 \Big(\int_{\partial_{R}U_\e} |v_{\e}^\w-\eta_\e^\w \phi^\w| ^2\Big)^\frac12\\
&\lesssim& (\e^{1-d}R^3)^\frac12 \Big(\int_{\partial_{R}U_\e} |\nabla(v_{\e}^\w-\eta_\e^\w \phi^\w)| ^2\Big)^\frac12\\
&\lesssim& K^2\e^{1-d}R^3 +\frac1{K^2}\int_{\partial_{R}U_\e} |\nabla(v_{\e}^\w-\eta_\e^\w \phi^\w)| ^2.
\end{eqnarray*}
We turn to the second right-hand side term in~\eqref{eq:est-homog-qu}.
Using Cauchy-Schwarz' inequality and the pressure estimate of Lemma~\ref{lem:pres} with $c:=\fint_{\partial_{R}U_\e\setminus\Ic^\w}\Pi^\w$ and $c':=\fint_{\partial_{R}U_\e\setminus\Ic^\w(U_\e)}Q_\e^\w$ (the proof of Lemma~\ref{lem:pres} needs to be repeated on the annulus $\partial_{R}U_\e$, using Poincar\'e's inequality as above), we obtain for all $K\ge1$,
\begin{eqnarray*}
\lefteqn{\frac1{R}\int_{\partial_{R}U_\e}|\phi^\w|\big(|Q_\e^\w-c'|\mathds1_{U_\e\setminus\Ic^\w(U_\e)}+|\Pi^\w-c|\mathds1_{\R^d\setminus\Ic^\w}\big)}\\
&\lesssim&\Big(\frac1{R^2}\int_{\partial_{R}U_\e}|\phi^\w|^2\Big)^\frac12\Big(\int_{\partial_{R}U_\e}|Q_\e^\w-c'|^2\mathds1_{U_\e\setminus\Ic^\w(U_\e)}+|\Pi^\w-c|^2\mathds1_{\R^d\setminus\Ic^\w}\Big)^\frac12\\
&\lesssim&\Big(\frac1{R^2}\int_{\partial_{R}U_\e}|\phi^\w|^2\Big)^\frac12\Big(\e^{1-d}R^3+\int_{\partial_{R}U_\e}|\nabla\phi^\w|^2+|\nabla v_\e^\w|^2\Big)^\frac12\\
&\lesssim&\e^{1-d}R^3+K^2\int_{\partial_{R}U_\e}\Big(\frac1{R^2}|\phi^\w|^2+|\nabla\phi^\w|^2\Big)+\frac1{K^2}\int_{\partial_{R}U_\e}|\nabla (v_\e^\w-\eta_\e^\w\phi^\w)|^2.
\end{eqnarray*}
It remains to analyze the last two right-hand side terms in~\eqref{eq:est-homog-qu}. Proceeding similarly as above, we obtain for all $K\ge1$,
\begin{multline*}
\frac1{R}\int_{\partial_{R}U_\e}|v_\e^\w-\eta_\e^\w\phi^\w|\big(|\nabla\phi^\w|+|\Pi^\w-c|\mathds1_{\R^d\setminus\Ic^\w}\big)\\
\,\lesssim\,K^2\e^{1-d}R^3+K^2\int_{\partial_{R}U_\e}|\nabla\phi^\w|^2+\frac1{K^2}\int_{\partial_{R}U_\e}|\nabla(v_\e^\w-\eta_\e^\w\phi^\w)|^2,
\end{multline*}
and also
\begin{equation*}
\frac1{R}\int_{\partial_{R}U_\e}|\nabla(v_\e^\w-\eta_\e^\w\phi^\w)||\phi^\w|\,\lesssim\,K^2\int_{\partial_{R}U_\e}\frac1{R^2}|\phi^\w|^2+\frac1{K^2}\int_{\partial_{R}U_\e}|\nabla(v_\e^\w-\eta_\e^\w\phi^\w)|^2.
\end{equation*}
Inserting the above estimates into~\eqref{eq:est-homog-qu} and choosing $K$ large enough to absorb part of the right-hand side terms into the left-hand side, we are led to
\begin{equation*}
\int_{U_\e}|\nabla(v_\e^\w-\eta_\e^\w\phi^\w)|^2\lesssim\e^{1-d}R^3+\int_{\partial_{R}U_\e}\Big(\frac1{R^2}|\phi^\w|^2+|\nabla\phi^\w|^2\Big),
\end{equation*}
which yields \eqref{eq:preergthm} after rescaling and using Poincar\'e's inequality.

\medskip
\substep{2.2} Proof of \eqref{eq:convLp}.\\
Taking the expectation of the $p$-th power of  \eqref{eq:preergthm} and using the quantitative estimates of Theorem~\ref{th:main2}, under~\ref{Mix+} for $d>2$ or under~\ref{Hyp+} for any $d\ge1$, we deduce for all $1\le p<\infty$ and~$1\le R\le\frac1\e$,
\begin{equation*}
\|u_{\e,2}-\e\phi(\tfrac\cdot\e)\|_{\Ld^p(\Omega; H^1(U))}^2\lesssim_p \e\Big(\frac1{R}\mu_d(\tfrac1\e)^2+ R^3\Big),
\end{equation*}
and the claim~\eqref{eq:convLp} follows for the choice $R=\mu_d(\frac1\e)^\frac12$.

\medskip
\substep{2.3} Proof of \eqref{eq:conv-as}.\\
Starting point is again~\eqref{eq:preergthm} for $R=\mu_d(\frac1\e)^{1/2}$.
We need to convert the optimal annealed bounds  \eqref{eq:nablaphi-bnd}--\eqref{eq:phi-bnd} and \eqref{eq:nablaphi-bnd+}--\eqref{eq:phi-bnd+} into suboptimal quenched bounds and prove that for all $\kappa>0$ there exists
a random variable $\mathcal X_\kappa$ with finite algebraic moments such that for all~$x\in \R^d$,
\begin{equation}\label{ag++1}
|\phi^\w(x)|\,\le\, \mathcal X^\w_\kappa \langle x\rangle^\kappa\mu_d(|x|), \qquad |\nabla \phi^\w(x)|\,\le\, \mathcal X^\w_\kappa \langle x\rangle^\kappa.
\end{equation}
Indeed, equipped with these bounds, 
we deduce for all $0<\kappa\ll 1$ and almost all $\omega$,
$$
\e^d\int_{\partial_{R}U_\e}\Big(\frac1{R^2}|\phi^\w|^2+|\nabla\phi^\w|^2\Big) \,\lesssim \, (\mathcal X^\w_\kappa)^2 \e^{1-2\kappa} \mu_d(\tfrac1\e)^{\frac32} \xrightarrow{\e \downarrow 0} 0,
$$
so that~\eqref{eq:conv-as} follows from~\eqref{eq:preergthm}.
To prove \eqref{ag++1}, we set 
$$
 \mathcal X^\w_\kappa := \sup_{x \in \R^d} \Big(\langle x\rangle^{-\kappa}\mu_d(|x|)^{-1}|\phi^\w(x)|+\langle x\rangle^{-\kappa}|\nabla \phi^\w(x)|\Big),
$$
and it suffices to check that this random variable has bounded moments. By local regularity in form of~\eqref{eq:loc-reg00}, a covering argument yields
$$
 \mathcal X^\w_\kappa \lesssim \sup_{x \in \frac1{\sqrt d}\Z^d} \bigg(\langle x\rangle^{-\kappa}\mu_d(|x|)^{-1} \Big(\fint_{B_2(x)} |\phi^\w|^2\Big)^\frac12+\langle x\rangle^{-\kappa}\Big(\fint_{B_2(x)} |\nabla \phi^\w|^2\Big)^\frac12\bigg).
$$
Hence, given $\kappa>0$, since the function $x\mapsto \langle x\rangle^{-p\kappa}$ is integrable on $\R^d$ for $p>d/\kappa$,
the moment bounds~\eqref{eq:nablaphi-bnd}--\eqref{eq:phi-bnd} and \eqref{eq:nablaphi-bnd+}--\eqref{eq:phi-bnd+} lead (after bounding the supremum on $\frac1{\sqrt d}\Z^d$ by the sum) to
\begin{multline*}
\expec{(\mathcal X_\kappa)^p}\,\lesssim_p\,\sum_{x \in \frac1{\sqrt d}\Z^d}\langle x\rangle^{-p\kappa}\,\E\bigg[{ \mu_d(|x|)^{-p} \Big(\fint_{B_2(x)} |\phi|^2\Big)^\frac p2+\Big(\fint_{B_2(x)} |\nabla \phi|^2\Big)^\frac p2}\bigg]\\
\,\lesssim_p\,   \sum_{x \in \frac1{\sqrt d}\Z^d} \langle x\rangle^{-p\kappa}\,\lesssim_p\,1,
\end{multline*}
that is, $\mathcal X_\kappa \in \Ld^p(\Omega)$ (which is therefore almost surely finite).
\end{proof}

%%%%%%%%%%%%%%%%%%
%%%%%%%%%%%%%%%%%%

\medskip\appendix
\section{Functional-analytic version of hyperuniformity}\label{sec:hyper}
The present appendix is devoted to a more detailed discussion and motivation of the hyperuniformity assumptions~\ref{Hyp} and~\ref{Hyp+}.
Pioneered by Lebowitz~\cite{Lebowitz-83,JLM-93} in the physical literature for Coulomb systems,
the notion of hyperuniformity for a point process~$\Pc$ on $\R^d$ was first coined and theorized by Torquato and Stillinger~\cite{Torquato-Stillinger-03} (see also~\cite{Torquato-16,Ghosh-Lebowitz-17}) as the suppression of density fluctuations. More precisely, while for a Poisson point process one has~$\var{\sharp(\Pc \cap {B_R})}\propto|B_R|$, the process $\Pc$ is said to be hyperuniform if rather
\begin{equation}\label{eq:hyper-def}
\lim_{R\uparrow\infty} \frac{\var{\sharp(\Pc \cap {B_R})}}{|B_R|}=0.
\end{equation}
Typically, this concerns processes for which number fluctuations are a boundary effect, that is, $\var{\sharp(\Pc \cap {B_R})}\lesssim |\partial B_R|$.
Hyperuniformity can be interpreted as a hidden form of order on large scales and has been observed in various types of physical and biological systems, see e.g.~\cite{Torquato-Stillinger-03,Torquato-16}. For Coulomb gases, rigorous results on the hyperuniformity of the Gibbs state have been recently obtained in~\cite{BBNY-17,Leble-Serfaty-18,Serfaty-20}.
The simplest examples of hyperuniform processes are given by perturbed lattices, e.g.\@ $\Pc:=\{z+U_z:z\in\Z^d\}$ where the lattice points in $\Z^d$ are pertubed by iid random variables $\{U_z\}_{z\in\Z^d}$  (this model however only enjoys discrete stationarity due to the lattice structure; see~\cite{Peres-Sly-14,Yakir-20} for refined properties).

\medskip\noindent
Alternatively, hyperuniformity is known to be equivalent to the vanishing of the structure factor in the small-wavenumber limit, that is,
\[\lim_{k\to0}S(k)=0,\]
where the structure factor is defined as the Fourier transform $S(k):=\widehat h_{2}(k)$ of the total pair correlation function $h_2$, cf.~Definition~\ref{def:corr-fct}. If the pair correlation function is integrable, this can equivalently be written as
\begin{equation}\label{eq:hyp-equiv0}
S(0)=\int_{\R^d}h_2=0.
\end{equation}
The advantage of this reformulation in terms of the structure factor $S(k)$ is that the latter can be directly observed in diffraction experiments. This property of vanishing structure factor is reminiscent of crystals, and indeed hyperuniform processes share crystalline properties on large scales, although they can be statistically isotropic like gases, thereby leading to a new state of matter~\cite{Torquato-18}.

\medskip\noindent
In the spirit of~\eqref{eq:hyp-equiv0}, in our periodized setting, for a family $\{\Pc_L\}_{L\ge1}$ of random point processes $\Pc_L$ on $Q_L$,
we consider the following slightly relaxed definition of hyperuniformity, cf.~\ref{Hyp},
\begin{equation}\label{eq:hypL}
\sup_{L\ge1}\,L^2\,\Big|\int_{Q_L}h_{2,L}\Big|\,<\, \infty,
\end{equation}
which is viewed as the approximate vanishing of the corresponding structure factors at~$0$ in the limit $L\uparrow\infty$. The precise rate $O(L^{-2})$ is chosen in view of Lemma~\ref{lem:var-lin} below.
As claimed, such a definition of hyperuniformity in terms of structure factors implies the suppression of density fluctuations in the following sense.
\begin{lem}[Density fluctuations~\cite{Torquato-Stillinger-03,Ghosh-Lebowitz-17}]
Let a family $\{\Pc_L\}_{L\ge1}$ of random point processes $\Pc_L$ on $Q_L$ be hyperuniform in the sense of~\eqref{eq:hypL}.
Then, for all $1\le R\le L$,
\[\var{\sharp(\Pc_L \cap {B_R})}\,\lesssim\, \rho_L^2|B_R|\,\Big(L^{-2}+\int_{Q_L}\big(1\wedge\tfrac{|x|_L}R\big)\,|g_{2,L}(x)|\,dx\Big).\]
In particular, provided that the pair correlation function $g_{2,L}$ has fast enough decay in the sense of $\sup_{L\ge1}\int_{Q_L}|x|_L|g_{2,L}(x)|\,dx<\infty$, we deduce for all $1\le R\le L$,
\[\var{\sharp(\Pc_L \cap {B_R})}\,\lesssim\,\rho_L^2|\partial B_R|.\qedhere\]
\end{lem}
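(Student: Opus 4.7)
Starting from the variance formula~\eqref{var-hyper} of Definition~\ref{def:corr-fct} applied to the indicator $\zeta = \mathds{1}_{B_R}$, I would first rewrite
\begin{equation*}
\var{\sharp(\Pc_L \cap B_R)} \,=\, \rho_L^2\int_{Q_L} h_{2,L}(z)\,f_R(z)\,dz,
\end{equation*}
where $f_R(z) := \int_{Q_L}\mathds{1}_{B_R}(x)\mathds{1}_{B_R}(x-z)\,dx$ is the periodic overlap volume (simply the convolution of $\mathds{1}_{B_R}$ with $\mathds{1}_{-B_R}$ on the torus). The key geometric ingredient is the elementary estimate
\begin{equation*}
\big|f_R(z)-|B_R|\big| \,\lesssim\, R^{d-1}(|z|_L \wedge R) \,\simeq\, |B_R|\,\big(1\wedge \tfrac{|z|_L}{R}\big),
\end{equation*}
which reflects the fact that the symmetric difference $B_R\triangle(B_R+z)$ sits in a boundary layer of width $|z|_L \wedge R$.

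Next I would add and subtract $|B_R|$ inside the integral to get
\begin{equation*}
\var{\sharp(\Pc_L \cap B_R)} \,=\, \rho_L^2|B_R|\int_{Q_L}h_{2,L} \,+\, \rho_L^2\int_{Q_L}h_{2,L}(z)\bigl(f_R(z)-|B_R|\bigr)dz.
\end{equation*}
The first term is precisely where the hyperuniformity assumption~\eqref{eq:hypL} enters, contributing $O(\rho_L^2|B_R|L^{-2})$. For the second term, I decompose $h_{2,L} = g_{2,L}+\rho_L^{-1}\delta$: the Dirac part drops out since $f_R(0)=|B_R|$, while the remaining continuous part is controlled by $\rho_L^2|B_R|\int_{Q_L}|g_{2,L}(z)|(1\wedge|z|_L/R)\,dz$ via the boundary-layer estimate above. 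Combining the two contributions gives the first claim.

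For the corollary, under the stronger moment bound $\sup_L\int_{Q_L}|z|_L|g_{2,L}(z)|\,dz<\infty$, I dominate $1\wedge|z|_L/R \le |z|_L/R$ to obtain $\int_{Q_L}|g_{2,L}|(1\wedge|z|_L/R)\,dz \lesssim R^{-1}$; since $1\le R\le L$, this also dominates the $L^{-2}$ contribution. The resulting bound $\rho_L^2|B_R|/R \simeq \rho_L^2|\partial B_R|$ is exactly the advertised surface scaling.

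The argument is essentially a covariance/Plancherel-type computation on the torus combined with a single elementary geometric estimate on overlap volumes of balls, which justifies the placement in the appendix. The only care required is to track periodization (writing $|z|_L$ for the torus distance) and to separate the atomic from the continuous part of $h_{2,L}$; I do not foresee any substantive obstacle.
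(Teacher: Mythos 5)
Your proposal is correct and follows essentially the same route as the paper: the variance identity from Definition~\ref{def:corr-fct} applied to $\mathds1_{B_R}$, the overlap-volume estimate $|B_R(-x)\cap B_R|=|B_R|+\big(1\wedge\tfrac{|x|_L}{R}\big)O(|B_R|)$ with the error vanishing at $x=0$ (so the atomic part of $h_{2,L}$ drops out), hyperuniformity~\eqref{eq:hypL} for the $|B_R|\int_{Q_L}h_{2,L}$ term, and the domination $1\wedge\tfrac{|x|_L}{R}\le\tfrac{|x|_L}{R}$ together with $L^{-2}\lesssim R^{-1}$ for the surface-scaling corollary. Your explicit splitting of $h_{2,L}$ into its Dirac and continuous parts is just a slightly more spelled-out version of the same step in the paper.
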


\begin{proof}
Number fluctuations are computed as follows,
\begin{eqnarray*}
\var{\sharp(\Pc_L \cap {B_R})}\,=\,\Var\bigg[{\sum_n\mathds1_{B_R}(x_{n,L})}\bigg]&=&\rho_L^2\iint_{B_R\times B_R}h_{2,L}(x-y)\,dxdy\\
&=&\rho_L^2\int_{Q_L}|B_R(-x)\cap B_R|\,h_{2,L}(x)\,dx.
\end{eqnarray*}
Hence, decomposing
\[|B_R(-x)\cap B_R|=|B_R|-|B_R\setminus B_R(-x)|=|B_R|+\big(1\wedge\tfrac{|x|_L}R\big)\,O(|B_R|),\]
where the last summand is a continuous function that vanishes at $x=0$, 
we deduce
\begin{gather*}
\bigg|\frac1{|B_R|}\var{\sharp(\Pc_L \cap {B_R})}-\rho_L^2\int_{Q_L}h_{2,L}\bigg|
\,\lesssim\,\rho_L^2\int_{Q_L}\big(1\wedge\tfrac{|x|_L}R\big)\,|g_{2,L}(x)|\,dx,
\end{gather*}
and the conclusion follows.
\end{proof}

Recall that a Poisson point process $\Pc=\{x_n\}_n$ on $\R^d$ satisfies $\var{\sum_n\zeta(x_n)}\propto\int_{\R^d}|\zeta|^2$ for all $\zeta\in C^\infty_c(\R^d)$, and that similarly any point process $\Pc$ with integrable pair correlation function satisfies
\begin{equation}\label{eq:var-est00}
\Var\bigg[{\sum_n\zeta(x_n)}\bigg]\,=\,\rho^2\iint_{\R^d\times\R^d}\zeta(x)\zeta(y)\,h_2(x-y)\,dxdy\,\lesssim\,\rho^2\int_{\R^d}|\zeta|^2.
\end{equation}
The suppression of density fluctuations under hyperuniformity is naturally expected to lead to an improved version of such a variance estimate. Indeed, given an independent copy~$\{x_{n}'\}_n$ of $\Pc=\{x_{n}\}_n$, we may represent
\[\Var\bigg[\sum_n \zeta(x_{n})\bigg]\,=\,\E\E'\bigg[\frac12\Big(\sum_n \zeta(x_{n})-\sum_n \zeta(x_{n}')\Big)^2\bigg],\]
and the suppression of density fluctuations would formally allow to locally couple the random point sets~$\{x_{n}'\}_n$ and $\{x_{n}\}_n$, only comparing points of the two realizations one to one locally, which would ideally translate into the gain of a derivative: for all $\zeta\in C^\infty_c(\R^d)$,
\begin{equation}\label{eq:withhyp+}
\Var\bigg[\sum_n \zeta(x_{n})\bigg]\,\lesssim\,\rho^2\int_{\R^d}|\nabla\zeta|^2.
\end{equation}
Indeed, provided that the pair correlation function has fast enough decay, it can be checked that hyperuniformity~\eqref{eq:hyper-def} is equivalent to this improved variance inequality~\eqref{eq:withhyp+}.
In our periodized setting, a rigorous statement is as follows.

\begin{lem}[Functional characterization of hyperuniformity]\label{lem:var-lin}
Consider a family $\{\Pc_L\}_{L\ge1}$ of random point processes $\Pc_L=\{x_{n,L}\}_n$ on $Q_L$ and assume that the pair correlation function $g_{2,L}$ has fast enough decay in the sense of
\[\sup_{L\ge1}\int_{Q_L}|x|_L^2|g_{2,L}(x)|\,dx<\infty.\]
Then $\{\Pc_L\}_{L\ge1}$ is hyperuniform in the sense of~\eqref{eq:hypL} if and only if  for all $L\ge1$ and $\zeta\in C_\per^\infty(Q_L)$ we have
\begin{equation}\label{var-estim-lin}
\Var\bigg[{\sum_n\zeta(x_{n,L})}\bigg]\,\lesssim\,\rho_L^2\int_{Q_L}|\nabla\zeta|^2+L^{-2}\rho_L^2\int_{Q_L}|\zeta|^2.
\end{equation}
In particular, the latter implies for all $\zeta\in C_\per^\infty(Q_L)$ with $\expec{\sum_n\zeta(x_{n,L})}=\rho_L\int_{Q_L}\zeta=0$,
\[\Var\bigg[{\sum_n\zeta(x_{n,L})}\bigg]\,\lesssim\,\rho_L^2\int_{Q_L}|\nabla\zeta|^2.\qedhere\]
\end{lem}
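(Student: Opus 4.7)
The natural approach is to pass to Fourier on the periodic cell, where both sides of~\eqref{var-estim-lin} diagonalize and the claim reduces to a pointwise comparison between the (discrete) structure factor
$$S_L(k) \,:=\, \int_{Q_L} h_{2,L}(x)\, e^{-ik\cdot x}\, dx, \qquad k \in \tfrac{2\pi}{L}\Z^d,$$
and the quantity $|k|^2 + L^{-2}$. Expanding $\zeta = \sum_k c_k\, e^{ik\cdot x}$ and inserting this into the defining identity~\eqref{var-hyper}, a direct Parseval computation yields
$$\Var\Big[\sum_n \zeta(x_{n,L})\Big] \,=\, \rho_L^2\, L^d \sum_k |c_k|^2\, S_L(k),$$
while $\|\zeta\|_{\Ld^2(Q_L)}^2 = L^d \sum_k |c_k|^2$ and $\|\nabla\zeta\|_{\Ld^2(Q_L)}^2 = L^d \sum_k |k|^2 |c_k|^2$. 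Nonnegativity of variances applied to single Fourier modes also gives $S_L(k) \ge 0$. Consequently, \eqref{var-estim-lin} is equivalent to the pointwise bound $S_L(k) \lesssim |k|^2 + L^{-2}$ on $\frac{2\pi}{L}\Z^d$.

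For the forward direction, I would combine hyperuniformity at the origin with a quadratic Taylor estimate of $S_L$ at $k=0$ coming from the decay assumption. By stationarity and the symmetric role of pairs the density $g_{2,L}$ is even, so $\widehat{g_{2,L}}$ is real and $\nabla\widehat{g_{2,L}}(0) = 0$; the elementary inequality $|1 - \cos(k\cdot x)| \le \tfrac12 |k|^2 |x|^2$ together with the hypothesis $\sup_L \int_{Q_L} |x|_L^2 |g_{2,L}(x)|\, dx < \infty$ then yields $|S_L(k) - S_L(0)| \lesssim |k|^2$ uniformly in $L$. Combined with the hyperuniformity assumption $|S_L(0)| = |\int_{Q_L} h_{2,L}| \lesssim L^{-2}$, this gives the desired pointwise bound and hence~\eqref{var-estim-lin}.

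For the converse, it suffices to test~\eqref{var-estim-lin} with $\zeta \equiv 1$: the left-hand side becomes $\Var[\sharp \Pc_L] = \rho_L^2 L^d S_L(0)$, the gradient term vanishes, and the right-hand side is $C L^{-2} \rho_L^2 L^d$, forcing $|S_L(0)| \lesssim L^{-2}$, which is precisely hyperuniformity. The \emph{in particular} statement for mean-zero $\zeta$ then follows at once, since $\expec{\sum_n \zeta(x_{n,L})} = \rho_L \int_{Q_L} \zeta = 0$ kills the zero mode $c_0$, while on $\frac{2\pi}{L}\Z^d \setminus \{0\}$ the lower bound $|k| \ge 2\pi/L$ allows the residual $L^{-2}\int|\zeta|^2$ term to be absorbed into $\int |\nabla\zeta|^2$.

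The only slightly subtle point is the cancellation of the linear Taylor term in $S_L$ at the origin, which crucially relies on the evenness of $g_{2,L}$: without it one would only obtain $|S_L(k) - S_L(0)| \lesssim |k|$, losing one full power and rendering the bound insufficient. The rest is routine Fourier analysis on the torus.
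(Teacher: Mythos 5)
Your proof is correct, but it takes a genuinely different route from the paper's. The paper never passes to Fourier space: for the direction ``\eqref{var-estim-lin} $\Rightarrow$ hyperuniformity'' it tests with $\zeta\equiv1$ exactly as you do, but for the converse it stays in physical space and ``recomposes the square'' in the defining identity \eqref{var-hyper}, writing the variance as $-\frac12\rho_L^2\iint|\zeta(x)-\zeta(y)|^2g_{2,L}(x-y)\,dxdy+\rho_L^2\big(\int_{Q_L}|\zeta|^2\big)\big(\int_{Q_L}h_{2,L}\big)$, bounding the first term by $\rho_L^2\|\nabla\zeta\|_{\Ld^2(Q_L)}^2\int_{Q_L}|x|_L^2|g_{2,L}|$ and the second by \eqref{eq:hypL}. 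Your argument instead diagonalizes the quadratic form on the dual lattice and reduces the inequality to the pointwise bound $S_L(k)\lesssim|k|^2+L^{-2}$, obtained from \eqref{eq:hypL} at $k=0$ plus a quadratic Taylor estimate exploiting evenness of $g_{2,L}$; that Taylor step is precisely the Fourier dual of the paper's recomposition (both gain two powers by subtracting the coincidence value, respectively the zero mode). What your route buys: the intermediate pointwise structure-factor bound and the observation $S_L(k)\ge0$, which is more information than \eqref{var-estim-lin} itself and connects directly to the diffraction interpretation of hyperuniformity, and the ``in particular'' statement drops out at once from $|k|\ge2\pi/L$ (the paper gets it from Poincar\'e on the torus, which is the same fact). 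What the paper's route buys: no Fourier bookkeeping at all, and slightly lighter integrability demands. On that last point, one small refinement to your write-up: do not literally invoke $\nabla\widehat{g_{2,L}}(0)=0$, since differentiating $\widehat g_{2,L}$ at $0$ would require a first moment of $|g_{2,L}|$, which is not assumed; the clean statement is $S_L(k)-S_L(0)=\int_{Q_L}g_{2,L}(x)\,(\cos(k\cdot x)-1)\,dx$ by symmetrization $x\mapsto-x$, after which your inequality $|1-\cos(k\cdot x)|\le\frac12|k|^2|x|^2$ uses only the assumed second moment. With that phrasing your proof is complete and correct.
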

\begin{proof}
By the definition of the total pair correlation function $h_{2,L}$, cf.~Definition~\ref{def:corr-fct}, recall that
\[\Var\bigg[{\sum_n\zeta(x_{n,L})}\bigg]=\rho_L^2\iint_{Q_L\times Q_L}\zeta(x)\zeta(y)\,h_{2,L}(x-y)\,dxdy.\]
Choosing $\zeta=1$, the variance inequality~\eqref{var-estim-lin} yields
\[\Big|\int_{Q_L}h_{2,L}\Big|\lesssim L^{-2},\]
that is, our definition~\eqref{eq:hypL} of hyperuniformity, and it remains to prove the converse implication.
Recomposing the square, the above identity for the variance takes the form
\[\Var\bigg[{\sum_n\zeta(x_{n,L})}\bigg]=-\frac12\rho_L^2\iint_{Q_L\times Q_L}|\zeta(x)-\zeta(y)|^2\,g_{2,L}(x-y)\,dxdy
+\Big(\rho_L^2\int_{Q_L}|\zeta|^2\Big)\Big(\int_{Q_L}h_{2,L}\Big).\]
Using the decay of correlations to estimate the first right-hand side term, and
hyperuniformity~\eqref{eq:hypL} to estimate the last one, the variance inequality~\eqref{var-estim-lin} follows.
\end{proof}

While the above variance inequality is restricted to \emph{linear} functionals $Y_L=\sum_n\zeta(x_{n,L})$ of the point process, the analysis of nonlinear multibody interactions requires a corresponding tool for general nonlinear functionals.
For general functionals $Y=Y(\Pc)$ of a Poisson point process $\Pc$ with unit intensity on $\R^d$, the following variance inequality is known to hold~\cite{Wu-00,Last-Penrose-11a},
\[\var{Y(\Pc)}\le\E\bigg[\int_{\R^d}\big(\partial^{\operatorname{add}}_yY(\Pc)\big)^2dy\bigg],\qquad \partial^{\operatorname{add}}_yY(\Pc):=Y(\Pc\cup\{y\})-Y(\Pc),\]
where the difference operator $\partial^{\operatorname{add}}$ is known as the {\it add-one-point operator}.
More general versions of this type of functional inequality have been considered in the literature as a convenient quantification of nonlinear mixing in order to cover various classes of examples. In this spirit, our improved mixing assumption~\ref{Mix+} is formulated in terms of the multiscale variance inequality~\eqref{eq:SGL} of~\cite{DG1,DG2}.
As shown in~\cite[Section~3]{DG2}, this covers most examples of interest in materials science~\cite{Torquato-02}, including for instance (periodized) hardcore Poisson processes and random parking processes.
Applied to a linear functional $Y_L=\sum_n\zeta(x_{n,L})$, this variance inequality~\eqref{eq:SGL} clearly reduces to~\eqref{eq:var-est00}, so that~\eqref{eq:SGL} can indeed be viewed as a nonlinear version of~\eqref{eq:var-est00}.

\medskip\noindent
In the hyperuniform setting, as number fluctuations are suppressed, the {\it add-one-point operator} in the above or the general {\it oscillation} in~\ref{Mix+} could be intuitively replaced by a suitable {\it ``move-point'' operator}, only allowing to locally {\it move} points of the process, but not add or remove any. A general version of this idea is formalized as the improved hyperuniformity assumption~\ref{Hyp+} in form of~\eqref{eq:SGL-hyp}.
Again, applied to a linear functional $Y_L=\sum_n\zeta(x_{n,L})$, this new variance inequality~\eqref{eq:SGL-hyp} clearly reduces to~\eqref{var-estim-lin}, so that~\eqref{eq:SGL-hyp} can be viewed as a nonlinear version.
We believe that this new functional inequality is of independent interest.

\begin{ex}[Perturbed lattices]
We briefly show that assumption~\ref{Hyp+} in terms of the hyperuniform multiscale variance inequality~\eqref{eq:SGL-hyp} is not empty.
For that purpose, we consider the simplest example of a hyperuniform process on $Q_L$, that is, the perturbed lattice~$\Pc_L:=\{z+U_z:z\in\Z^d\cap Q_L\}$, where the lattice points in $\Z^d \cap Q_L$ are perturbed by iid random variables $\{U_z\}_{z\in\Z^d\cap Q_L}$, say with values in the unit ball $B$.
This model $\Pc_L$ is easily checked to satisfy the following stronger version of the variance inequality~\eqref{eq:SGL-hyp}:  for all $\sigma(\Pc_L)$-measurable random variables~$Y(\Pc_L)$,
\begin{equation}\label{eq:pert-lat-SG+}
\var{Y(\Pc_L)}\,\le\,\frac12\E\bigg[\int_{Q_L}\Big(\partial^{\operatorname{mov}}_{\Pc_L,B_{1+\sqrt d/2}(z)}Y(\Pc_L)\Big)^2\,dz\bigg],
\end{equation}
where we recall that the move-point derivative $\partial^{\operatorname{mov}}$ is defined in~\ref{Hyp+}.
This is indeed a direct consequence of the Efron-Stein inequality~\cite{Efron-Stein-81} for the iid sequence $\{U_z\}_{z\in\Z^d\cap Q_L}$ in the following form: for $\Pc_{L,z}:=\{y+U_y\}_{y:y\ne z}\cup \{z+U'_z\}$ with $\{U'_z\}_z$ an iid copy of $\{U_z\}_z$,
\begin{equation*}
\var{Y(\Pc_L)}\,\le \,\frac12\E\bigg[\sum_{z\in \Z^d\cap Q_L} \big(Y(\Pc_L)-Y(\Pc_{L,z})\big)^2\bigg],
\end{equation*}
while $Y(\Pc_L)-Y(\Pc_{L,z})$ can be bounded by $\partial^{\operatorname{mov}}_{\Pc,B(z)}Y(\Pc)$, thus leading to~\eqref{eq:pert-lat-SG+}.
\end{ex}

%%%%%%%%%%%%%%%%%%
%%%%%%%%%%%%%%%%%%

\section*{Acknowledgements}
MD acknowledges financial support from the CNRS-Momentum program,
and AG from the European Research Council (ERC) under the European Union's Horizon 2020 research and innovation programme (Grant Agreement n$^\circ$~864066).

\bibliographystyle{plain}

\begin{thebibliography}{10}

\bibitem{AKM2}
S.~{Armstrong}, T.~{Kuusi}, and J.-C. {Mourrat}.
\newblock The additive structure of elliptic homogenization.
\newblock {\em Invent. Math.}, 208:999--1154, 2017.

\bibitem{AKM-book}
S.~Armstrong, T.~Kuusi, and J.-C. Mourrat.
\newblock {\em Quantitative stochastic homogenization and large-scale
  regularity}, volume 352 of {\em Grundlehren der Mathematischen Wissenschaften
  [Fundamental Principles of Mathematical Sciences]}.
\newblock Springer, Cham, 2019.

\bibitem{AL-17}
S.~Armstrong and J.~Lin.
\newblock Optimal quantitative estimates in stochastic homogenization for
  elliptic equations in nondivergence form.
\newblock {\em Arch. Ration. Mech. Anal.}, 225(2):937--991, 2017.

\bibitem{Armstrong-Daniel-16}
S.~N. Armstrong and J.-P. Daniel.
\newblock Calder\'on-{Z}ygmund estimates for stochastic homogenization.
\newblock {\em J. Funct. Anal.}, 270(1):312--329, 2016.

\bibitem{AS}
S.~N. Armstrong and C.~K. Smart.
\newblock Quantitative stochastic homogenization of convex integral
  functionals.
\newblock {\em Ann. Sci. \'Ec. Norm. Sup\'er. (4)}, 49(2):423--481, 2016.

\bibitem{Avellaneda-Lin-87}
M.~Avellaneda and F.-H. Lin.
\newblock Compactness methods in the theory of homogenization.
\newblock {\em Comm. Pure Appl. Math.}, 40(6):803--847, 1987.

\bibitem{Avellaneda-Lin-91}
M.~Avellaneda and F.-H. Lin.
\newblock {$L^p$} bounds on singular integrals in homogenization.
\newblock {\em Comm. Pure Appl. Math.}, 44(8-9):897--910, 1991.

\bibitem{Batchelor-72}
G.~K. Batchelor.
\newblock Sedimentation in a dilute dispersion of spheres.
\newblock {\em J. Fluid Mech.}, 52(2):245--268, 1972.

\bibitem{BBNY-17}
R.~Bauerschmidt, P.~Bourgade, M.~Nikula, and H.-T. Yau.
\newblock Local density for two-dimensional one-component plasma.
\newblock {\em Comm. Math. Phys.}, 356(1):189--230, 2017.

\bibitem{Guazzelli}
L.~Bergougnoux, S.~Ghicini, {E.~Guazzelli}, and E.~J. Hinch.
\newblock Spreading fronts and fluctuations in sedimentation.
\newblock {\em Physics of Fluids}, 15(7):1875--1887, 2003.

\bibitem{Guazzelli2}
L.~Bergougnoux and {E.~Guazzelli}.
\newblock Dilute sedimenting suspensions of spheres at small inertia.
\newblock {\em J. Fluid Mech.}, 2021, in press.

\bibitem{Brenner-99}
M.~P. Brenner.
\newblock Screening mechanisms in sedimentation.
\newblock {\em Phys. Fluids}, 11(4):754--772, 1999.

\bibitem{Burgers-41}
J.~M. Burgers.
\newblock On the influence of the concentration of a suspension upon the
  sedimentation velocity (in particular for a suspension of spherical
  particles).
\newblock {\em Proc. Kon. Nederl. Akad. Wet.}, 44:1045--1051 \& 1177--1184,
  1941.

\bibitem{Burgers-42}
J.~M. Burgers.
\newblock On the influence of the concentration of a suspension upon the
  sedimentation velocity (in particular for a suspension of spherical
  particles).
\newblock {\em Proc. Kon. Nederl. Akad. Wet.}, 45:9--16 \& 126--128, 1942.

\bibitem{CP-98}
L.~A. Caffarelli and I.~Peral.
\newblock On {$W^{1,p}$} estimates for elliptic equations in divergence form.
\newblock {\em Comm. Pure Appl. Math.}, 51(1):1--21, 1998.

\bibitem{Caflisch-Luke}
R.~E. Caflisch and J.~H.~C. Luke.
\newblock Variance in the sedimentation speed of a suspension.
\newblock {\em Phys. Fluids}, 28(3):759--760, 1985.

\bibitem{CH-20}
K.~Carrapatoso and M.~Hillairet.
\newblock On the derivation of a {S}tokes-{B}rinkman problem from {S}tokes
  equations around a random array of moving spheres.
\newblock {\em Comm. Math. Phys.}, 373(1):265--325, 2020.

\bibitem{Delmotte-Deuschel-05}
T.~Delmotte and J.-D. Deuschel.
\newblock On estimating the derivatives of symmetric diffusions in stationary
  random environment, with applications to {$\nabla\phi$} interface model.
\newblock {\em Probab. Theory Related Fields}, 133(3):358--390, 2005.

\bibitem{DG-19}
M.~Duerinckx and A.~Gloria.
\newblock Corrector equations in fluid mechanics: {E}ffective viscosity of
  colloidal suspensions.
\newblock {\em Arch. Ration. Mech. Anal.}, 239:1025--1060, 2021.

\bibitem{DG-20+}
M.~Duerinckx and A.~Gloria.
\newblock Quantitative homogenization theory for colloidal suspensions in
  steady {S}tokes flow.
\newblock In preparation.

\bibitem{DG1}
M.~Duerinckx and A.~Gloria.
\newblock {Multiscale functional inequalities in probability: Concentration
  properties}.
\newblock {\em ALEA, Lat. Am. J. Probab. Math. Stat.}, 17:133--157, 2020.

\bibitem{DG2}
M.~Duerinckx and A.~Gloria.
\newblock {Multiscale functional inequalities in probability: Constructive
  approach}.
\newblock {\em Annales Henri Lebesgue}, 3:825--872, 2020.

\bibitem{DO1}
M.~Duerinckx and F.~Otto.
\newblock Higher-order pathwise theory of fluctuations in stochastic
  homogenization.
\newblock {\it Stoch. Partial Differ. Equ. Anal. Comput.}, 8:625--692, 2020.

\bibitem{Efron-Stein-81}
B.~Efron and C.~Stein.
\newblock The jackknife estimate of variance.
\newblock {\em Ann. Statist.}, 9(3):586--596, 1981.

\bibitem{Einstein-06}
A.~Einstein.
\newblock Eine neue {B}estimmung der {M}oleküldimensionen.
\newblock {\em Ann. Phys.}, 19(2):289--306, 1906.

\bibitem{Feuillebois-84}
F.~Feuillebois.
\newblock Sedimentation in a dispersion with vertical inhomogeneities.
\newblock {\em J. Fluid Mech.}, 139(4):145--171, 1984.

\bibitem{Galdi}
G.~P. Galdi.
\newblock {\em An introduction to the mathematical theory of the
  {N}avier-{S}tokes equations. Steady-state problems}.
\newblock Springer Monographs in Mathematics. Springer, New York, second
  edition, 2011.

\bibitem{Gehring}
F.~W. Gehring.
\newblock The {$L^p$}-integrability of the partial derivatives of a
  quasiconformal mapping.
\newblock {\em Acta Math.}, 130:265--277, 1973.

\bibitem{Ghosh-Lebowitz-17}
S.~Ghosh and J.~L. Lebowitz.
\newblock Fluctuations, large deviations and rigidity in hyperuniform systems:
  {A} brief survey.
\newblock {\em Indian J. Pure Appl. Math.}, 48(4):609--631, 2017.

\bibitem{GiaMo}
M.~Giaquinta and G.~Modica.
\newblock Regularity results for some classes of higher order non linear
  elliptic systems.
\newblock {\em J. Reine Angew. Math.}, 311/312:145--169, 1979.

\bibitem{Gloria-19}
A.~{Gloria}.
\newblock {A scalar version of the Caflisch-Luke paradox}.
\newblock {\em Comm. Pure Appl. Math.}, 2021.
\newblock In press.

\bibitem{GNO1}
A.~Gloria, S.~Neukamm, and F.~Otto.
\newblock Quantification of ergodicity in stochastic homogenization: optimal
  bounds via spectral gap on {G}lauber dynamics.
\newblock {\em Invent. Math.}, 199(2):455--515, 2015.

\bibitem{GNO-reg}
A.~Gloria, S.~Neukamm, and F.~Otto.
\newblock A regularity theory for random elliptic operators.
\newblock {\em Milan J. Math.}, 88:99--170, 2020.

\bibitem{GO1}
A.~Gloria and F.~Otto.
\newblock An optimal variance estimate in stochastic homogenization of discrete
  elliptic equations.
\newblock {\em Ann. Probab.}, 39(3):779--856, 2011.

\bibitem{GH-rev-10}
E.~Guazzelli and J.~Hinch.
\newblock {Fluctuations and Instability in Sedimentation}.
\newblock {\em Annu. Rev. Fluid Mech.}, 43:97--116, 2011.

\bibitem{Hillairet-18}
M.~Hillairet.
\newblock On the homogenization of the {S}tokes problem in a perforated domain.
\newblock {\em Arch. Ration. Mech. Anal.}, 230(3):1179--1228, 2018.

\bibitem{HMS-19}
M.~Hillairet, A.~Moussa, and F.~Sueur.
\newblock On the effect of polydispersity and rotation on the {B}rinkman force
  induced by a cloud of particles on a viscous incompressible flow.
\newblock {\em Kinet. Relat. Models}, 12(4):681--701, 2019.

\bibitem{Hinch-77}
E.~J. Hinch.
\newblock An averaged-equation approach to particle interactions in a fluid
  suspension.
\newblock {\em J. Fluid Mech.}, 83(4):695--720, 1977.

\bibitem{Hinch-88}
E.~J. Hinch.
\newblock Sedimentation of small particles.
\newblock In E.~Guyon, J.-P. Nadal, and Y.~Pomeau, editors, {\em Disorder and
  Mixing}, volume 152 of {\em NATO ASI Series E: Applied Sciences}, pages
  153--161. Kluwer Academic Publisher, 1988.

\bibitem{Hofer-18}
R.~M. H\"{o}fer.
\newblock Sedimentation of inertialess particles in {S}tokes flows.
\newblock {\em Comm. Math. Phys.}, 360(1):55--101, 2018.

\bibitem{Hofer-19}
R.~M. H\"{o}fer.
\newblock Convergence of the {M}ethod of {R}eflections for {P}article
  {S}uspensions in {S}tokes {F}lows.
\newblock Preprint, arXiv:1912.04388, 2019.

\bibitem{Jabin-Otto-04}
P.-E. Jabin and F.~Otto.
\newblock Identification of the dilute regime in particle sedimentation.
\newblock {\em Comm. Math. Phys.}, 250(2):415--432, 2004.

\bibitem{JLM-93}
B.~Jancovici, J.~L. Lebowitz, and G.~Manificat.
\newblock Large charge fluctuations in classical {C}oulomb systems.
\newblock {\em J. Statist. Phys.}, 72(3-4):773--787, 1993.

\bibitem{JKO94}
V.~V. Jikov, S.~M. Kozlov, and O.~A. Ole{\u\i}nik.
\newblock {\em Homogenization of differential operators and integral
  functionals}.
\newblock Springer-Verlag, Berlin, 1994.

\bibitem{KS-91}
D.~L. Koch and Shaqfeh E. S. G.
\newblock Screening in sedimenting suspensions.
\newblock {\em J. Fluid Mech.}, 224:275--303, 1991.

\bibitem{Ladd-96}
A.~J.~C. Ladd.
\newblock Hydrodynamic screening in sedimenting suspensions of non-{B}rownian
  spheres.
\newblock {\em Phys. Rev. Lett.}, 76(8):1392--1395, 1996.

\bibitem{Ladd-97}
A.~J.~C. Ladd.
\newblock Sedimentation of homogeneous suspensions of non-{B}rownian spheres.
\newblock {\em Phys. Fluids}, 9(1):491--499, 1997.

\bibitem{Last-Penrose-11a}
G.~Last and M.~D. Penrose.
\newblock Poisson process {F}ock space representation, chaos expansion and
  covariance inequalities.
\newblock {\em Probab. Theory Related Fields}, 150(3-4):663--690, 2011.

\bibitem{Leble-Serfaty-18}
T.~Lebl\'{e} and S.~Serfaty.
\newblock Fluctuations of two-dimensional {C}oulomb gases.
\newblock {\em Geom. Funct. Anal.}, 28(2):443--508, 2018.

\bibitem{Lebowitz-83}
J.~L. Lebowitz.
\newblock Charge fluctuations in {C}oulomb systems.
\newblock {\em Phys. Rev. A}, 27:1491--1494, 1983.

\bibitem{LAT-01}
X.~Lei, B.~J. Ackerson, and P.~Tong.
\newblock Settling statistics of hard sphere particles.
\newblock {\em Phys. Rev. Lett.}, 86(15):3300--3303, 2001.

\bibitem{Luke-00}
J.~H.~C. Luke.
\newblock Decay of velocity fluctuations in a stably stratified suspension.
\newblock {\em Phys. Fluids}, 12(6):1619--1621, 2000.

\bibitem{MaO}
D.~Marahrens and F.~Otto.
\newblock Annealed estimates on the {G}reen's function.
\newblock {\em Probab. Theory Related Fields}, 163(3-4):527--573, 2015.

\bibitem{Martin-Yalcin-80}
P.~A. Martin and T.~Yalcin.
\newblock The charge fluctuations in classical {C}oulomb systems.
\newblock {\em J. Statist. Phys.}, 22(4):435--463, 1980.

\bibitem{Mecherbet-19}
A.~Mecherbet.
\newblock Sedimentation of particles in {S}tokes flow.
\newblock {\em Kinet. Relat. Models}, 12(5):995--1044, 2019.

\bibitem{NG-95}
H.~Nicolai and E.~Guazzelli.
\newblock Effect of the vessel size on the hydrodynamic diffusion of
  sedimenting spheres.
\newblock {\em Phys. Fluids}, 7(1):3--5, 1995.

\bibitem{NHHOG-95}
H.~Nicolai, B.~Herzhaft, E.~J. Hinch, L.~Oger, and E.~Guazzelli.
\newblock Particle velocity fluctuations and hydrodynamic self-diffusion of
  sedimenting non-{B}rownian spheres.
\newblock {\em Phys. Fluids}, 7(1):12--23, 1995.

\bibitem{Niethammer-Schubert-19}
B.~{Niethammer} and R.~{Schubert}.
\newblock {A local version of Einstein's formula for the effective viscosity of
  suspensions}.
\newblock {\em SIAM J. Math. Anal.}, 52(3):2561--2591, 2020.

\bibitem{Otto-Tlse}
F.~Otto.
\newblock \emph{Introduction to stochastic homogenization}.
\newblock Lecture notes for a course taught during the Winterschool
  \emph{Calculus of Variations and Probability} at Université Paul Sabatier,
  Toulouse, 2019.

\bibitem{PapaVara}
G.~C. Papanicolaou and S.~R.~S. Varadhan.
\newblock Boundary value problems with rapidly oscillating random coefficients.
\newblock In {\em Random fields, {V}ol. {I}, {II} ({E}sztergom, 1979)},
  volume~27 of {\em Colloq. Math. Soc. J\'anos Bolyai}, pages 835--873.
  North-Holland, Amsterdam, 1981.

\bibitem{Peres-Sly-14}
Y.~Peres and A.~Sly.
\newblock Rigidity and tolerance for perturbed lattices.
\newblock Preprint, arXiv:1409.4490.

\bibitem{Segre-rev-16}
P.~N. Segr\`e.
\newblock Fluctuations in particle sedimentation.
\newblock In A.~Fern\'andez-Nieves and A.~M. Puertas, editors, {\em {Fluids,
  Colloids and Soft Materials: An Introduction to Soft Matter Physics}},
  volume~27, pages 45--58. John Wiley \& Sons Inc., New York, 2016.

\bibitem{SHC-97}
P.~N. Segrè, E.~Herbolzheimer, and P.~M. Chaikin.
\newblock Long-range correlations in sedimentation.
\newblock {\em Phys. Rev. Lett.}, 79(13):2574--2577, 1997.

\bibitem{Serfaty-20}
S.~Serfaty.
\newblock Gaussian fluctuations and free energy expansion for 2d and 3d
  {C}oulomb gases at any temperature.
\newblock Preprint, arXiv:2003.11704.

\bibitem{Shen-07}
Z.~Shen.
\newblock The {$L^p$} boundary value problems on {L}ipschitz domains.
\newblock {\em Adv. Math.}, 216:212--254, 2007.

\bibitem{Torquato-02}
S.~Torquato.
\newblock {\em Random heterogeneous materials}, volume~16 of {\em
  Interdisciplinary Applied Mathematics}.
\newblock Springer-Verlag, New York, 2002.
\newblock Microstructure and macroscopic properties.

\bibitem{Torquato-16}
S.~Torquato.
\newblock Hyperuniformity and its generalizations.
\newblock {\em Phys. Rev. E}, 94(2):022122, 2016.

\bibitem{Torquato-18}
S.~Torquato.
\newblock Hyperuniform states of matter.
\newblock {\em Phys. Rep.}, 745:1--95, 2018.

\bibitem{Torquato-Stillinger-03}
S.~Torquato and F.~H. Stillinger.
\newblock Local {D}ensity {F}luctuations, {H}yperuniform {S}ystems, and {O}rder
  {M}etrics.
\newblock {\em Phys. Rev. E}, 68:041113, 2003.

\bibitem{Smoluchowski-11}
M.~von Smoluchowski.
\newblock {\"U}ber die {W}echselwirkung von {K}ugeln, die sich in einer z\"ahen
  {F}l\"ussigkeit bewegen.
\newblock {\em Bull. Acad. Sci. Cracovie A}, 1:28--39, 1911.

\bibitem{Smoluchowski-12}
M.~von Smoluchowski.
\newblock On the practical applicability of {S}tokes' law.
\newblock {\em Proc. 5th Intern. Cong. Math.}, 2:192--201, 1912.

\bibitem{Wu-00}
L.~Wu.
\newblock A new modified logarithmic {S}obolev inequality for {P}oisson point
  processes and several applications.
\newblock {\em Probab. Theory Related Fields}, 118(3):427--438, 2000.

\bibitem{Yakir-20}
O.~Yakir.
\newblock Recovering the lattice from its random perturbations.
\newblock Preprint, arXiv:2002.01508.

\end{thebibliography}

\def\cprime{$'$} \def\cprime{$'$} \def\cprime{$'$}

\end{document}